\documentclass[11pt]{amsart}
\usepackage{xcolor}
\usepackage{mathtools}
\usepackage[latin2]{inputenc}
\usepackage{amsmath, amssymb}

\usepackage[
  hmarginratio={1:1},     % equal left and right margins
  vmarginratio={1:1},     % equal top and bottom margins
  textwidth=430pt,        % new text width
  heightrounded,          % always useful
  %bindingcorrection=5mm,  % binding correction
]{geometry}

\usepackage{hyperref}
\usepackage{enumerate}
\usepackage{tikz}
\usepackage{graphicx} % in real document delete demo option
\usepackage{subcaption}

\mathtoolsset{showonlyrefs,showmanualtags}

\newtheorem{theorem}{Theorem}[section]
\newtheorem{lemma}[theorem]{Lemma}
\newtheorem{proposition}[theorem]{Proposition}

\theoremstyle{definition}
\newtheorem{definition}[theorem]{Definition}
\theoremstyle{remark}
\newtheorem{remark}[theorem]{Remark}

\newcommand{\R}{\mathbf{R}}
\def\Xint#1{\mathchoice
{\XXint\displaystyle\textstyle{#1}}%
{\XXint\textstyle\scriptstyle{#1}}%
{\XXint\scriptstyle\scriptscriptstyle{#1}}%
{\XXint\scriptscriptstyle\scriptscriptstyle{#1}}%
\!\int}
\def\XXint#1#2#3{{\setbox0=\hbox{$#1{#2#3}{\int}$ }
\vcenter{\hbox{$#2#3$ }}\kern-.6\wd0}}

\def\dashint{\Xint-}

\allowdisplaybreaks

\title[Tumor growth with nutrients]{Tumor growth with nutrients: stability of the tumor patches}

\author{Inwon Kim}
\address[Inwom Kim]{University of California, Los Angeles, Box 951555, Los Angeles, CA 90095, USA}
\email{ikim@math.ucla.edu}

\author{Jona Lelmi}
\address[Jona Lelmi]{Institut f\"ur angewandte Mathematik, Universit\"at Bonn, Endenicher Allee 60, 53115 Bonn, Germany}
\email{lelmi@hcm.uni-bonn.de}

\begin{document}

\maketitle

\begin{abstract}
In this paper, we study a tumor growth model with nutrients. The contact inhibition for the tumor cells, presented in the model, results in the evolution of a congested {\it tumor patch}. We study the regularity of the tumor patch as the nutrients' diffusion strength $D$ diminishes. In particular, we show that for small $D>0$ the boundary of the tumor patch stays in a small neighborhood of the smooth  tumor patch boundary obtained with $D=0$, uniformly with respect to the Hausdorff distance.

 \medskip

\noindent \textbf{Keywords}: congested dynamics, tumor growth, Wasserstein, stability. 

  \medskip

\noindent \textbf{Mathematical Subject Classification (MSC2020)}: 35B65 (primary), 35Q92.
	
\end{abstract}

\section{Introduction}\label{sec:intro}

%One of the models for tumor growth with nutrients which appears in the literature is the following. 

We consider a basic model that describes tumor growth with nutrients, given by the following system of PDEs, set in $Q:= \R^d\times [0,\infty)$:
\begin{equation}\label{eq:model_eqnts}
\begin{cases}
\partial_t \rho_t - \operatorname{div}\left(\rho\nabla p \right) &= (n-b)\rho,\quad \rho \le 1,\quad p \in P_{\infty}(\rho),
\\ \partial_t n - D\Delta n &= -\rho n,\quad n \to c > 0\ \text{as}\ |x|\to \infty,
\end{cases}
\end{equation}
Here $D, b, c$  are nonnegative constants, and $P_{\infty}$ denotes the Hele-Shaw graph, given by
\begin{align*}
P_{\infty}(\rho) = \begin{cases}
0\ &\text{if}\ \rho < 1,
\\ [0, +\infty)\ &\text{if}\ \rho = 1.
\end{cases}
\end{align*}
 This system has been actively studied in recent literature: see for instance \cite{Maury2014, PQV, DP21, Jacobs2022, GKM}. Here $\rho$ and $n$ each denote the density of tumor cells and the nutrients.  The tumor cells grow by the nutrients which are supplied by the external environment, while also dying at rate $b$. The condition $p\in P_{\infty}(\rho)$ implies that the pressure variable $p$ is the Lagrange multiplier for the constraint $\rho \le 1$, which represents the contact inhibition in cells.  In the region $n\geq b$, the cells only grow within the constraints, so the solution features time-evolving {\it patches} of congested cell region $\{\rho=1\}$. In this paper we will only consider the model with no death ($b=0$), and only solutions with initial density given as a characteristic function $\chi_{\Omega(0)}$, which then will evolve as a characteristic function $\rho(\cdot,t)= \chi_{\Omega(t)}$. In this paper, we consider the model with no death term ($b=0$). Moreover, for the Hausdorff convergence of the boundaries (Theorem~\ref{thm:conv_patches}), we consider only solutions with initial density given as a characteristic function $\chi_{\Omega(0)}$, which then will evolve as a characteristic function $\rho(\cdot,t)= \chi_{\Omega(t)}$. Our focus is on the regularity properties of the patch boundary $\partial\Omega(t)$.

 \medskip
 
While the system is well-posed, it presents rather curious instability of patch boundaries. When $D > 0$, the tumor patch appears to generate growing fingers, as observed by numerical experiments \cite{kitsu97, Maury2014, PTV14}. (Also see the recent instability analysis in \cite{TZ} for a closely related model). See Figure~\ref{fig:image1} for a numerical simulation. {(Figure~\ref{fig:image1} and Figure~\ref{fig:image2} contain snapshots of simulations run by Wonjun Lee. The time evolution is from left to right, and the white set corresponds to the tumor patch. The implementation is based on the numerical scheme in \cite{JLL}, which is consistent with the scheme we discuss in this paper.)}

\begin{figure}[ht] % <---
   \begin{subfigure}{0.32\textwidth}
       \includegraphics[width=\linewidth]{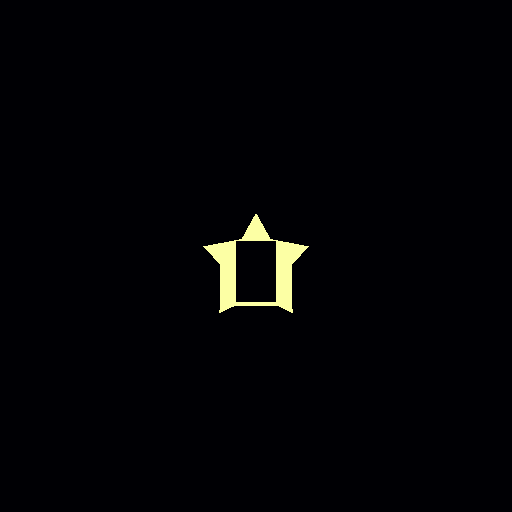}
   \end{subfigure}
\hfill % <--- 
   \begin{subfigure}{0.32\textwidth}
       \includegraphics[width=\linewidth]{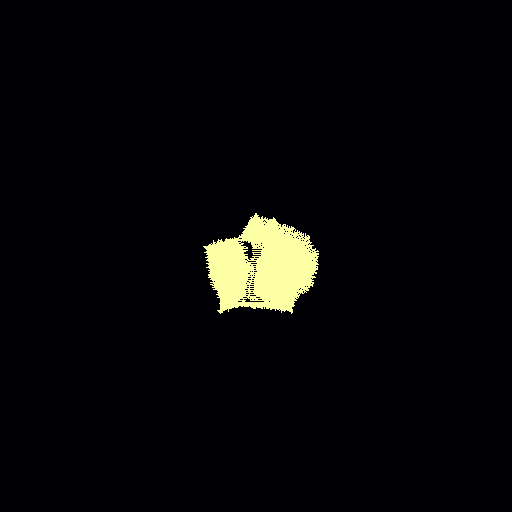}
   \end{subfigure}
\hfill % <---
   \begin{subfigure}{0.32\textwidth}
       \includegraphics[width=\linewidth]{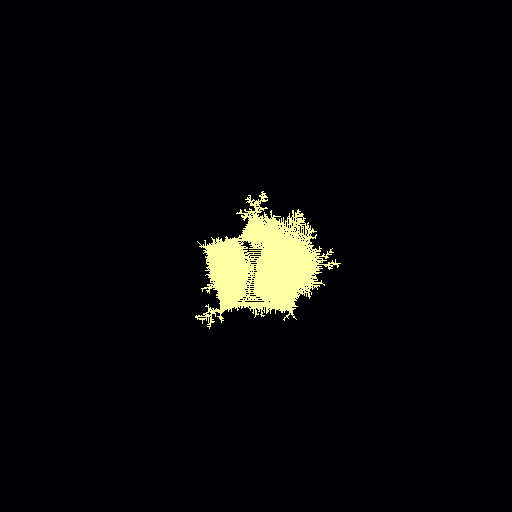}
   \end{subfigure}

   \caption{Numerical solution with constant initial nutrient, $b = 0$, $D~=~10^{-6}$. Thanks to Wonjun Lee for running the simulation.}
   \label{fig:image1}
\end{figure}

The growth of oscillation on the patch boundary appears to be caused by the competitive (prey-predator) nature of the density and the nutrient. One could further conjecture that the irregularity may get worse as $D$ decreases since less diffusion would make the nutrients less averaged and irregular. It is thus surprising that the nutrient diffusion is indeed essential in this de-regularizing phenomenon: when $D=0$ it was shown in \cite{Jacobs2022} that the patch boundary becomes smooth in finite time when there is a sufficient amount of nutrients at the initial time. 

\medskip

In view of the above discussion, it is natural to ask whether some information is lost between the model with a small choice of $D$ and with $D=0$. In this paper we focus on this question: we show that within finite time the answer is no, at least in terms of Hausdorff distance. Namely, we will show that, for any given finite time interval,  the patch boundary for small $D$ lies in a small neighborhood of the smooth patch boundary for $D=0$. While our results do not rule out small-scale irregularities that vanish as $D$ tends to zero, we can at least say that there are no persisting irregularities at a fixed scale in the diffusion zero limit (see Theorem 1). We should emphasize though that this convergence only holds within a fixed finite time interval: we in fact conjecture that as $D$ tends to zero, the finger growth will slow down and only be significant after a long time. The role of nutrient diffusion in this exhibition of instability remains to be understood.

\medskip

There are many open questions that remain to be investigated. For instance, we suspect that a stronger mode of convergence holds true for the patch boundary. For example, we conjecture that the perimeters of the tumor patches converge as well in our setting, however, we are not able to verify it. The effect of the cell death rate ($b>0$) is also yet to be studied. With a positive death rate, the tumor patch develops a necrotic core, where the density falls below one. In this case, the fingers growth on the outer boundary of the set $\{\rho=1\}$ was observed even when $D=0$, see Figure~\ref{fig:image2}.

\begin{figure}[ht] % <---
   \begin{subfigure}{0.32\textwidth}
       \includegraphics[width=\linewidth]{barenblatt-0.png}
   \end{subfigure}
\hfill % <--- 
   \begin{subfigure}{0.32\textwidth}
       \includegraphics[width=\linewidth]{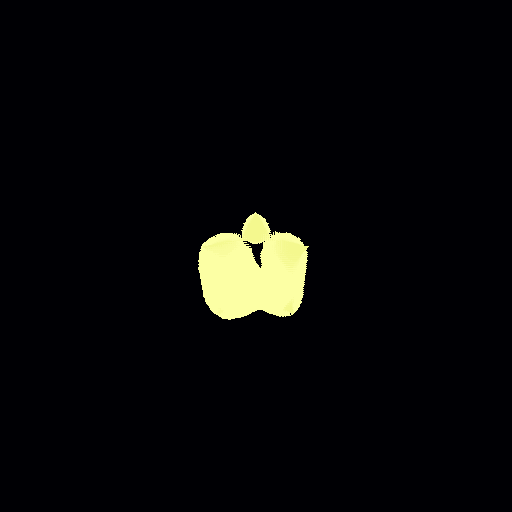}
   \end{subfigure}
\hfill % <---
   \begin{subfigure}{0.32\textwidth}
       \includegraphics[width=\linewidth]{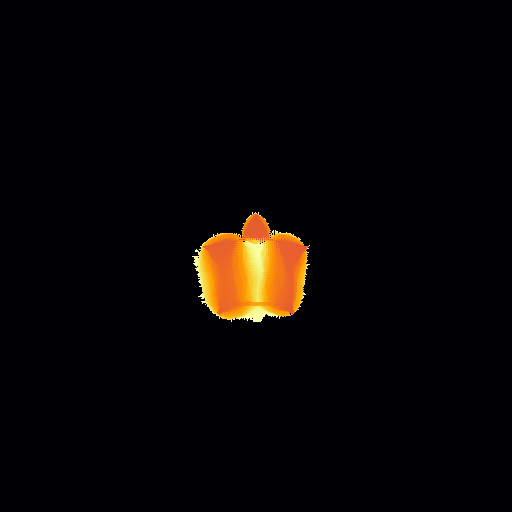}
   \end{subfigure}

   \caption{Numerical solution with constant initial nutrient, $b = 0.2$, $D~=~0$.}
   \label{fig:image2}
\end{figure}

  It would be interesting to understand if our analysis can be extended to $b>0$ when this parameter is assumed to converge to zero. In this case, the non-monotone nature of the patch evolution poses a challenge. On the other hand, the main technical result of our work,  namely the $H^1$ convergence of the pressure variable (Proposition \ref{prop:strong_conv_pressure}), may be suitably extended to this setting.  
 
 \medskip
 
 The main step in our analysis is improving the $L^1$-convergence of the tumor patches (proved in Theorem \ref{thm:conv_D_zero}) to the Hausdorff convergence of their boundaries. This is done by using both (i) the monotone-expanding property of the tumor patches, 
 % which follows from the same property at the level of %the approximation Scheme %\ref{item:firs\hat{t}cheme}
 and (ii) the strong $H^1$ convergence of the pressure variables, proved in Proposition \ref{prop:strong_conv_pressure}. The latter is based on a variational characterization of the pressure variable, following ideas similar to that of \cite{KIM2022}, where the authors prove it in a Keller-Segel model for chemotaxis.

 \medskip

 The rest of the paper is organized as follows: in Section \ref{sec:mainres} we state our main results, namely, Theorem \ref{thm:conv_patches} and Proposition \ref{prop:strong_conv_pressure}. In Section \ref{sec:prelim} we recall the definition of weak solution for the system \eqref{eq:model_eqnts}, and its corresponding existence and uniqueness. In Section \ref{sec:schemes} we recall the approximation scheme used to construct weak solutions to \eqref{eq:model_eqnts} and a slight variant of it that ensures a stronger convergence of the nutrient variable.  In Section \ref{sec:proofs} we present the proofs of our results.
 
 \section{Main results}\label{sec:mainres}
In this section, we state the main results of this paper. Here and in the rest of the paper we will always assume that there is no death term, i.e.\ we set $b = 0$ in \eqref{eq:model_eqnts}. 

\medskip

\textbf{Notation}. Given an open subset $\Omega \subset \mathbf{R}^d$ we denote by $\mathcal{M}(\Omega)$ the Banach space of Radon measures with finite total variation on $\Omega$, endowed with the total variation norm, which we denote by $\Vert \cdot \Vert_{\mathcal{M}(\Omega)}$. Given a map $\mu: 2^{\Omega} \to \mathbf{R}$, we say that $\mu \in \mathcal{M}_{loc}(\Omega)$ if $\mu|_{2^{\tilde{\Omega}}} \in \mathcal{M}(\tilde{\Omega})$ for every $\tilde{\Omega} \subset \subset \Omega$. We also warn the reader that for a map $\mu: 2^{\Omega} \to \mathbf{R}^d$ we will write $\mu \in \mathcal{M}(\Omega)$ to say, with a slight abuse of notation, $\mu_i \in \mathcal{M}(\Omega)$ for every $i = 1, . . . , d$. A similar abuse of notation will be made when, for a vector field $v: \Omega \to \mathbf{R}^d$, we will write $v \in L^p(\Omega)$, to mean that $v_i \in L^p(\Omega)$ for every $i = 1, . . . , d$.

\medskip

Given $n_0 \in L^\infty(\mathbf{R}^d)$ such that $\nabla n_0 \in \mathcal{M}_{loc}(\mathbf{R}^d)$ and $\rho_0 \in BV(\mathbf{R}^d)$ such that $0 \le \rho_0 \le 1$ a.e., we denote by $(n_D, \rho_D, p_D)$ the unique weak solution to \eqref{eq:model_eqnts}  in the sense of Definition~\ref{eq:weak_solutions} with $D > 0$ and initial value $(n_0, \rho_0)$. Similarly $(n, \rho, p)$ denotes the weak solution with same initial value and $D = 0$. We also define, for $t \ge 0$
\begin{equation}
\Gamma_D(t) :=\partial \{p_D(t) > 0\},\quad\Gamma(t) := \partial \{ p(t) > 0\}.
\end{equation}
When the initial value $\rho_0$ is a characteristic function, for $t \ge 0$ and $D > 0$ we  will denote by $d_H(\Gamma_D(t), \Gamma(t))$ the Hausdorff distance between $\Gamma_D(t)$ and $\Gamma(t)$, which we recall is defined as
\begin{equation}\label{eq:haus_dist_def}
d_H(\Gamma_D(t), \Gamma(t)) := \max \left( \sup_{x \in \Gamma(t)} d(x, \Gamma_D(t)), \sup_{x \in \Gamma_D(t)} d(x, \Gamma(t)) \right).
\end{equation}

Roughly speaking, Theorem~\ref{thm:conv_patches} says that we have stability in the $D$ parameter provided the solution to the system \eqref{eq:model_eqnts} without nutrients diffusion is sufficiently regular. 

\begin{theorem}[Hausdorff convergence of the tumor patches]\label{thm:conv_patches}
Let $n_0:\R^d\to [\lambda, C]$ for some $0< \lambda \leq C<\infty$ such that $\nabla n_0 \in L^2_{loc}(\mathbf{R}^d)$. Consider the initial density  $\rho_ 0\in BV(\mathbf{\R}^d)$ which takes value of $0$ or $1$. For these choices of $(n_0, \rho_0)$, assume that either

\begin{enumerate}[(i)]
\item $n_0>1$ or\label{item:n_0_1}
\item there exist $\hat{t} \geq 0$ such that $\Gamma(t)$ is uniformly $C^1$ for every $t\ge\hat{t}$.\label{item:regularity_ass}
\end{enumerate}
Then for every $t>\hat{t} $ we have
\begin{equation}
\lim_{D\to 0} d_{H}\left( \Gamma_{D}(t), \Gamma(t)\right) = 0.
\end{equation}
\end{theorem}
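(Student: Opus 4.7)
The plan is to reduce both hypotheses to a common framework, namely that $\Gamma(t)$ is uniformly $C^1$ at the time $t>\hat t$ of interest. Case (\ref{item:n_0_1}) should reduce to (\ref{item:regularity_ass}) by invoking the regularization result of \cite{Jacobs2022}: when $n_0>1$ and $D=0$, the patch boundary becomes smooth after a finite time, yielding an $\hat t$ for which (\ref{item:regularity_ass}) holds. Once $\Gamma(t)$ is uniformly $C^1$, I would prove the two inclusions entering \eqref{eq:haus_dist_def} separately.

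For the inclusion $\sup_{x\in\Gamma(t)} d(x,\Gamma_D(t))\to 0$, fix $x\in\Gamma(t)$ and a small $r>0$. The uniform $C^1$ regularity of $\Gamma(t)$ yields a two-sided Lebesgue density bound
\begin{equation}
|\Omega(t)\cap B_r(x)|\geq \delta_0 |B_r|,\qquad |B_r(x)\setminus\Omega(t)|\geq \delta_0 |B_r|,
\end{equation}
with $\delta_0>0$ independent of $x$. The $L^1$-convergence of patches (Theorem~\ref{thm:conv_D_zero}) then implies that for $D$ small both $\Omega_D(t)\cap B_r(x)$ and $B_r(x)\setminus\Omega_D(t)$ have positive measure; connectedness of $B_r(x)$ forces $\Gamma_D(t)\cap B_r(x)\neq\emptyset$, so $d(x,\Gamma_D(t))<r$. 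Uniformity in $x$ follows from the uniform density lower bound and a standard covering argument.

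For the opposite inclusion, argue by contradiction and assume that along a subsequence there exist $x_D\in\Gamma_D(t)$ with $d(x_D,\Gamma(t))\geq \varepsilon$. Then $B_\varepsilon(x_D)$ is either contained in $\Omega(t)$ or disjoint from it. In the first case, the $C^1$ regularity of $\Gamma(t)$ combined with the identity $-\Delta p=n$ on $\{p>0\}$ yields $p\geq c_0>0$ on a smaller ball around $x_D$. Combining this with the strong $H^1$ convergence $p_D\to p$ (Proposition~\ref{prop:strong_conv_pressure}) and interior elliptic estimates for $p_D$ (which satisfies $-\Delta p_D=n_D$ on $\{p_D>0\}$), one concludes that $p_D>0$ on a neighborhood of $x_D$, contradicting $x_D\in\partial\{p_D>0\}$. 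In the second case, $p\equiv 0$ on $B_\varepsilon(x_D)$, so $p_D\to 0$ in $H^1(B_\varepsilon(x_D))$; using the monotone-expanding property of the $D$-patches together with an obstacle-problem non-degeneracy estimate for $p_D$ near its free boundary (coming from $-\Delta p_D=n_D\geq \lambda>0$ on $\{p_D>0\}$), one derives a uniform lower bound for $\|p_D\|_{L^2(B_r(x_D))}$, contradicting the $H^1$ vanishing.

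The main obstacle lies in this second case: a priori the density $\rho_D$ could develop arbitrarily thin filaments reaching into the region $\{p=0\}$, and purely $L^1$ information about the densities cannot exclude this. It is precisely here that Proposition~\ref{prop:strong_conv_pressure} is needed: the strong $H^1$ compactness of the pressures pairs with the obstacle-type non-degeneracy of $p_D$ near $\Gamma_D(t)$ to upgrade the density convergence into a genuine convergence of free boundaries.
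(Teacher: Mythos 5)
Your reduction of case \eqref{item:n_0_1} to \eqref{item:regularity_ass} and your treatment of the direction $\sup_{x\in\Gamma(t)}d(x,\Gamma_D(t))\to 0$ are close in spirit to the paper (Proposition~\ref{prop:easy_part}), though even there you have a gap: you only invoke $L^1$ convergence of the densities, but $\Gamma_D(t)=\partial\{p_D(t)>0\}$, and density information alone cannot produce a point where $p_D>0$ (a congested region may carry zero pressure, since only $\{p_D>0\}\subset\{\rho_D=1\}$ is free). The paper additionally uses the $L^1$ convergence of the pressures together with a quantitative lower bound $c_0>0$ on averages $\dashint_{B_\delta(y)}p(t)$ for $y$ near $\Gamma(t)$ to find points of $\{p_D(t)>0\}$, and the cone-type density estimate to find points outside it.

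The serious gaps are in the hard direction, and in both of your cases the proposed mechanism fails. First, Proposition~\ref{prop:strong_conv_pressure} gives convergence of $\nabla p_D$ in $L^2$ \emph{in space-time}; at the fixed time $t$ in the statement you have no convergence of $p_D(t)$, so neither ``$p_D\to p$ near $x_D$'' in your interior case nor ``$p_D\to 0$ in $H^1(B_\varepsilon(x_D))$'' in your exterior case is available. Moreover, in the interior case you cannot apply interior elliptic estimates to $p_D$ near $x_D$, because $-\Delta p_D=n_D$ holds only on $\{p_D(t)>0\}$ and it is precisely the claim that this set contains a neighborhood of $x_D$ that is at stake. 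The paper circumvents both problems with a time shift: it works with the time-averaged pressure $w_D(t,\cdot)=\gamma^{-1}\int_t^{t+\gamma}p_D(s,\cdot)\,ds$, which satisfies $-\Delta w_D\ge -1/\gamma$ \emph{everywhere}, applies the weak Harnack inequality to $w_D$, uses the nondegeneracy $p(t,\cdot)\gtrsim\kappa_T\delta$ at depth $\delta$ (from the uniform $C^1$ boundary) plus the space-time $L^1$ closeness of $p_D$ to $p$, and then uses the monotone expansion of $\{\rho_D=1\}$ and the maximum principle for $-\Delta p_D(t+\gamma)=n_D\ge e^{-T}\lambda$ to conclude positivity at the slightly later time $t+\sqrt{\delta}$ (Proposition~\ref{prop:inner_control}); the time shift is later absorbed by flowing $\Gamma(\cdot)$ with the bounded speed $|\nabla p|\le C$. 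Second, your exterior case rests on an ``obstacle-problem non-degeneracy'' for $p_D$, but the sign is wrong: $-\Delta p_D=n_D\ge\lambda>0$ makes $p_D$ superharmonic on its positivity set, whereas the non-degeneracy estimate $\sup_{B_r}u\gtrsim\lambda r^2$ requires $\Delta u\ge\lambda$; and a non-degeneracy for $p_D$ uniform in $D$ at a fixed time is exactly the quantitative free-boundary control that possible fingering could destroy --- it is what must be proved, not assumed. The paper instead proves a finite-speed-of-propagation statement (Proposition~\ref{prop:ext_cont}): an annulus barrier $\phi$ with $-\Delta\phi=\rho_D n_D$ on $A_{r(t),R}(x_0)$, a $C^1$ bound of order $R^{-k}(\delta+\Vert p-p_D\Vert_{L^2})$ near the inner sphere (using that $\rho_D$ is small in $L^{d+1}(B_R(x_0))$ by Theorem~\ref{thm:conv_D_zero}), and a comparison of the time-integrated pressures $v_D=\int_{t_0}^t p_D$ with $q=\int_{t_0}^t\phi_+$, showing the $D$-positivity set cannot advance past radius $R/4$ within a time window of length $\sim R^k/\delta$; applied from $t_0=0$, where $\{p_D>0\}$ misses $B_R(x_0)$ entirely, this rules out fingers reaching depth $\epsilon$ into $\{\rho(t)=0\}$. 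Without these two devices (time averaging with Harnack and monotonicity for the interior, and the barrier/comparison propagation bound for the exterior), your contradiction scheme does not close.
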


The strict positivity of $n_0$ ensures that the tumor continues to expand over time. The regularity assumption \eqref{item:regularity_ass}, shown in many cases in \cite{Jacobs2022} (see the remark below), is to ensure that there is a stable direction of propagation for the pressure as $D$ tends to zero. These assumptions are likely not sharp, but we keep them in place to clarify our analysis.

\medskip

\begin{remark}
It is shown that a sufficient amount of nutrients leads to the regularity of $\Gamma(t)$, namely that \eqref{item:n_0_1} implies \eqref{item:regularity_ass} in the above theorem: see Theorem 2.7 of \cite{Jacobs2022}. When \eqref{item:n_0_1} does not hold \eqref{item:regularity_ass} still holds if the tumor manages to grow out of  the convex hull of its initial support $\Omega_0$:  See \cite[Lemma 4.6 and Theorem 2.7]{Jacobs2022} for the precise conditions that ensure that \eqref{item:regularity_ass} is satisfied.
\end{remark}
One of the main ingredients in the proof  is the following result about the strong $H^1$ convergence of the pressure variables, which is of independent interest. We will use the following notation: if $\rho$ is a measurable function such that $0 \le \rho \le 1$ almost everywhere, we define
\begin{equation}\label{eq:a_def_for_set}
H_{\rho} := \left\{ \xi \in H^1(\mathbf{R}^d):\ \xi \ge 0,\ \xi(1-\rho) = 0\ \text{a.e.}\right\}.
\end{equation}
\begin{proposition}[Strong convergence of $\nabla p_D$]\label{prop:strong_conv_pressure}
Let $n_0 \in L^{\infty}(\mathbf{R}^d)$ with $\nabla n_0 \in L^2_{loc}(\mathbf{R}^d)$. Let $\rho_0 \in L^1(\mathbf{R}^d)$ be compactly supported and such that $0 \le \rho_0 \le 1$. Then for Lebesgue point $t > 0$ of $p_D$ we have
\begin{equation}\label{eq:variational_ineq_pressure}
\int_{\mathbf{R}^d} \nabla p_D(x,t)\cdot \left( \nabla p_D(x,t) - \nabla \xi(x)\right)dx \le \int_{\mathbf{R}^d}(p_D(x, t) - \xi(x)) n_D(x, t)\rho_D(x, t)dx,
\end{equation}
for all $\xi \in H_{\rho_D(t)}$. In particular, for any $T > 0$, $ \nabla p_D \to \nabla p$ in $L^2((0, T) \times \mathbf{R}^d)$  as $D\to 0$.
\end{proposition}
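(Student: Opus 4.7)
The plan is to proceed in two stages: first derive the variational inequality at each Lebesgue point $t>0$, then combine it with weak $L^2$ compactness to upgrade weak convergence of $\nabla p_D$ to strong convergence.

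\textbf{Stage 1 (variational inequality).} The starting point is the weak formulation of the $\rho_D$-equation in \eqref{eq:model_eqnts}, tested against a time-independent spatial function $\xi\in H_{\rho_D(t)}$. Two observations drive the derivation. First, since $p_D(1-\rho_D)=0$ almost everywhere, we have $p_D\equiv 0$ on $\{\rho_D<1\}$, and Stampacchia's theorem yields $\nabla p_D=0$ there, hence $\rho_D\nabla p_D=\nabla p_D$ a.e.; the spatial term in the tested equation reduces to $\int \nabla p_D\cdot\nabla \xi\,dx$. Second, since $b=0$ and the pressure only activates inside the congested region, the patch $\{\rho_D(s)=1\}$ is monotone non-decreasing in $s$, so for $\xi\in H_{\rho_D(t)}$ the support of $\xi$ sits inside $\{\rho_D(s)=1\}$ for every $s\ge t$, and $s\mapsto \int \rho_D(s)\xi\,dx=\int \xi\,dx$ is constant on $[t,\infty)$. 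Consequently $\langle \partial_s \rho_D,\xi\rangle=0$ for a.e.\ $s\ge t$. Averaging the tested equation over $[t,t+h]$ and letting $h\to 0^+$ at a Lebesgue point of $p_D$ and $n_D\rho_D$ gives
\begin{equation}
\int_{\R^d}\nabla p_D(x,t)\cdot\nabla \xi(x)\,dx=\int_{\R^d}n_D(x,t)\rho_D(x,t)\xi(x)\,dx.
\end{equation}
Since $p_D(\cdot,t)\in H_{\rho_D(t)}$ is itself admissible, specializing to $\xi=p_D(\cdot,t)$ and subtracting yields \eqref{eq:variational_ineq_pressure} (as an equality, hence in particular as the stated inequality).

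\textbf{Stage 2 (strong $L^2$ convergence of the gradients).} From the existence theory, $\nabla p_D$ is uniformly bounded in $L^2((0,T)\times\R^d)$, so up to a subsequence $\nabla p_D\rightharpoonup \nabla p$ weakly. Expanding
\begin{equation}
\int_0^T\int|\nabla p_D-\nabla p|^2\,dx\,dt=\int_0^T\int|\nabla p_D|^2+\int_0^T\int|\nabla p|^2-2\int_0^T\int\nabla p_D\cdot\nabla p,
\end{equation}
strong convergence reduces to the $\limsup$ bound $\limsup_{D\to 0}\int_0^T\int|\nabla p_D|^2\le \int_0^T\int|\nabla p|^2$; the reverse is automatic by weak lower semicontinuity. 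Integrating \eqref{eq:variational_ineq_pressure} in time, for any admissible family $\xi_D(\cdot,t)\in H_{\rho_D(t)}$ one has
\begin{equation}
\int_0^T\int|\nabla p_D|^2\le \int_0^T\int \nabla p_D\cdot\nabla \xi_D+\int_0^T\int n_D\rho_D(p_D-\xi_D).
\end{equation}
If $\xi_D\to p$ strongly in $L^2(0,T;H^1(\R^d))$, the right-hand side converges to $\int_0^T\int|\nabla p|^2$ via weak-strong pairing in the first term and strong $L^2$-convergence of $p_D$ and $n_D\rho_D$ in the second, closing the argument.

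\textbf{Main obstacle.} The chief difficulty is producing a recovery sequence $\xi_D\in H_{\rho_D(t)}$ with $\xi_D\to p$ in $L^2(0,T;H^1(\R^d))$. The function $p$ itself is in general not admissible, because $\{p(\cdot,t)>0\}$ need not lie inside $\{\rho_D(\cdot,t)=1\}$. I would construct $\xi_D:=p\,\chi_D$, where $\chi_D(x,t)$ is a smooth cutoff built from the signed distance to $\{\rho_D(\cdot,t)<1\}$, tuned to a vanishing scale $\varepsilon(D)\to 0$ so that $\xi_D$ is supported in $\{\rho_D(\cdot,t)=1\}$. Pointwise convergence $\chi_D\to 1$ on $\{p>0\}$ follows from the $L^1$ convergence $\rho_D\to \rho$; the delicate point is controlling the $L^2$ norm of $p\,\nabla \chi_D$, which is concentrated in a boundary layer of thickness $\varepsilon(D)$. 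Using $\nabla p=0$ a.e.\ on $\{p=0\}$ and $-\Delta p = n\rho$ on $\{p>0\}$ supplies $H^1$ control of $p$ up to the free boundary which, combined with a careful tuning of $\varepsilon(D)$, forces the boundary-layer term to vanish. Once $\xi_D\to p$ in $L^2(0,T;H^1(\R^d))$ is in hand, the limit argument of Stage 2 concludes the proof.
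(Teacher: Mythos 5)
Your Stage 1 does not deliver the inequality \eqref{eq:variational_ineq_pressure} as written. Testing the weak formulation of Definition~\ref{eq:weak_solutions}\eqref{item:rhoeq} with a time-independent $\xi\in H_{\rho_D(t)}$ gives, for $h>0$,
\begin{equation}
\int_t^{t+h}\!\!\int_{\mathbf{R}^d}\nabla p_D\cdot\nabla\xi\,dx\,ds+\int_{\mathbf{R}^d}\bigl(\rho_D(t+h)-\rho_D(t)\bigr)\xi\,dx=\int_t^{t+h}\!\!\int_{\mathbf{R}^d} n_D\rho_D\,\xi\,dx\,ds,
\end{equation}
and since $\xi\rho_D(t)=\xi$ and $\rho_D\le 1$, the middle term is $\le 0$; so the only inequality you get for free is $\int\nabla p_D(t)\cdot\nabla\xi\,dx\ge\int n_D(t)\rho_D(t)\xi\,dx$. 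Applied to $\xi=p_D(t)$ this gives $\int|\nabla p_D(t)|^2\,dx\ge\int p_D n_D\rho_D\,dx$, i.e.\ exactly the \emph{wrong} direction for \eqref{eq:variational_ineq_pressure}. To turn your inequality into the claimed equality you need the middle term to vanish, i.e.\ $\rho_D(s)=1$ on $\{\rho_D(t)=1\}$ for $s\ge t$ -- the monotone expansion of the congested set for the $D>0$ weak solution with general data $0\le\rho_0\le1$. You assert this with a one-line heuristic, but it is not part of the definition of weak solution and is not proved (nor cited) in your argument; establishing it rigorously essentially requires going back to the Wasserstein projection scheme (comparison at each JKO step). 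This is precisely the point where the paper works differently: the missing direction is obtained from the variational structure of Scheme~\ref{item:second_scheme} (the discrete inequality of Lemma~\ref{lem:var_int_approx}, proved by perturbing along the optimal transport map) and then passed to the limit $\tau\to0$ via Theorem~\ref{thm:estimate_n_second_conv}; no monotonicity is needed.

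The more serious gap is Stage 2. Your scheme requires a recovery family $\xi_D(\cdot,t)\in H_{\rho_D(t)}$ with $\xi_D\to p$ in $L^2((0,T),H^1(\mathbf{R}^d))$, and the cutoff construction $\xi_D=p\,\chi_D$ does not close: the layer where $\nabla\chi_D\ne0$ is an $\varepsilon(D)$-neighborhood of $\{\rho_D<1\}\cap\{p>0\}$, a set which is only known to have small Lebesgue measure (from $\rho_D\to\rho$ in $L^1$); it may consist of thin tendrils penetrating deep into $\{p>0\}$, so its $\varepsilon(D)$-neighborhood need not have small measure and $\|p\,\nabla\chi_D\|_{L^2}\sim\varepsilon(D)^{-1}(\text{layer measure})^{1/2}$ cannot be controlled -- the geometric irregularity of $\partial\{\rho_D=1\}$ is exactly what is unknown here (it is the subject of the paper), and $H^1$ regularity of $p$ up to its own free boundary does not help since the problem occurs in the interior of $\{p>0\}$. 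The paper avoids any recovery sequence: it takes $\xi=0$ in \eqref{eq:variational_ineq_pressure} for $D>0$ (giving $\int\!\int|\nabla p_D|^2\le\int\!\int p_Dn_D\rho_D$) and $\xi=2p$ in the same inequality for the $D=0$ solution (giving $\int\!\int|\nabla p|^2\ge\int\!\int pn\rho$), both admissible within each solution's own constraint set, and then only needs $\int\!\int p_Dn_D\rho_D\to\int\!\int pn\rho$; this uses weak $L^2$ convergence of $p_D$, strong $L^1$ convergence of $\rho_D$ with uniformly bounded supports, and the strong $L^2_{loc}$ convergence of $n_D$, which is itself a nontrivial ingredient (Lemma~\ref{lem:convergence_nut}) that you also invoke implicitly -- Theorem~\ref{thm:conv_D_zero} only provides weak-$*$ convergence of $n_D$, and ``strong $L^2$ convergence of $p_D$'' is a consequence of the proposition you are proving, so it cannot be used here. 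As it stands, Stage 2 is a plan whose key step is missing and, in this form, unlikely to be repairable.
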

\begin{remark}\label{rem:density_smooth}
Let us point out the following:
\begin{enumerate}
\item By Definition~\ref{eq:weak_solutions}  for every $D \ge 0$ the map $[0, T] \ni t \mapsto \rho_D(t) \in L^1(\mathbf{R}^d)$ is continuous, in particular  $\rho_D(t) \in L^1(\mathbf{R}^d)$ as well as the set $H_{\rho_D(t)}$ are well-defined for each $t>0$.\label{item:def_H_RHO}
\item By Proposition~\ref{prop:strong_conv_pressure} we know that for every $T>0$, $\nabla p_D \to \nabla p$ in $L^2((0, T), L^2(\mathbf{R}^d))$ as $D \to 0$. By the Sobolev embedding theorem this implies that $p_D\to p$ in $L^2((0, T), L^{2^*}(\mathbf{R}^d))$ as $D \to 0$. Using Lemma~\ref{lem:bound_supp} in Section~\ref{sec:proofs}, we infer that $p_D\to p$  in $L^q((0, T) \times \mathbf{R}^d)$  for every $1 \le q \le 2$.\label{item:pressure_conver}
\end{enumerate}
\end{remark}

\section{Preliminaries}\label{sec:prelim}
\subsection{Existence of weak solutions}
Here, we give the definition of weak solution to \eqref{eq:model_eqnts}. Our definition is a slightly modified version of \cite[Definition 2.1]{Jacobs2022} 
\begin{definition}\label{eq:weak_solutions}
Let $n_0 \in L^\infty(\mathbf{R}^d)$ such that $\nabla n_0 \in \mathcal{M}_{loc}(\mathbf{R}^d)$ and $\rho_0 \in BV(\mathbf{R}^d)$ such that $\rho_0 \in [0, 1]$. Fix $T>0$ and denote $Q_T := \mathbf{R}^d \times [0, T)$. Non-negative functions $\rho \in C([0, T], L^1(\mathbf{R}^d)) \cap L^{\infty}([0, T], BV(\mathbf{R}^d))$, $p \in L^2([0, T], H^1(\mathbf{R}^d))$, and $n \in L^{\infty}(Q_T)$ such that $\nabla n \in L^{\infty}([0, T], \mathcal{M}_{loc}(\mathbf{R}^d))$ are said to form a weak solution to \eqref{eq:model_eqnts} in $Q_T$, if they satisfy:
\begin{enumerate}[(i)]
\item $\rho \in [0, 1]$ and $p(1-\rho) = 0$ in $Q_T$.\label{item:orthog}
\item For any $\psi \in C^{\infty}_c(Q_T)$ we have
\begin{align}
\int_0^T \int_{\mathbf{R}^d} (\nabla \psi \cdot \nabla p - \rho \partial_t\psi) dx dt = \int_{\mathbf{R}^d} \psi(x, 0)\rho_0 dx + \int_0^T \int_{\mathbf{R}^d} \psi(n-b)\rho dx dt.
\end{align}\label{item:rhoeq}
\item For every $\psi \in C^{\infty}_c(Q_T)$ we have
\begin{align}
\int_0^T \int_{\mathbf{R}^d} (D\nabla n \cdot \nabla \psi - n\partial_t\psi) dx dt = \int_{\mathbf{R}^d}\psi(x, 0) n_0 dx - \int_0^T\int_{\mathbf{R}^d} \psi \rho n dx dt.
\end{align}\label{item:nutrienteq}
\end{enumerate}
\end{definition}
\begin{remark}
The difference between Definition~\ref{eq:weak_solutions} and \cite[Definition 2.1]{Jacobs2022} is that here we allow the initial nutrient $n_0$ to be such that $\nabla n_0 \in \mathcal{M}_{loc}(\mathbf{R}^d)$ instead of requiring $\nabla n_0 \in \mathcal{M}(\mathbf{R}^d)$. Correspondingly, we require $\nabla n \in L^{\infty}([0, T], \mathcal{M}_{loc}(\mathbf{R}^d))$ instead of requiring $\nabla n \in L^{\infty}([0, T], \mathcal{M}(\mathbf{R}^d))$. Also, the class of test functions for items \eqref{item:rhoeq} and \eqref{item:nutrienteq} in Definition~\ref{eq:weak_solutions} is now restricted to $C^{\infty}_c(Q_T)$. 
\end{remark}
The following well-posedness result is proved in \cite[Theorem 2.2]{Jacobs2022} for the original definition of weak solution \cite[Definition 2.1]{Jacobs2022}, but it can be proved in a completely analogous way with our definition.
\begin{theorem}[Theorem 2.2 in \cite{Jacobs2022}]\label{thm:ex_uniq}
Let $n_0 \in L^{\infty}(\mathbf{R}^d)$ with $\nabla n_0 \in \mathcal{M}_{loc}(\mathbf{R}^d)$. Let $\rho_0 \in BV(\mathbf{R}^d)$ be compactly supported and such that $0 \le \rho_0 \le 1$. Then for given $D, b \ge 0$ and any $T>0$ there exists a unique weak solution $(n, \rho, p)$ to \eqref{eq:model_eqnts} in $Q_T := \mathbf{R}^d \times [0, T)$ in the sense of Definition~\ref{eq:weak_solutions}.
\end{theorem}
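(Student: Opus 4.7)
The plan is to follow the proof of Theorem 2.2 in \cite{Jacobs2022} almost verbatim, the only novelty being to localize each estimate that previously appealed to the global integrability of $\nabla n$ so that the weaker assumption $\nabla n_0\in\mathcal{M}_{loc}(\mathbf{R}^d)$ suffices. I would approximate $n_0$ by a sequence $n_{0,k}$ with $\nabla n_{0,k}\in\mathcal{M}(\mathbf{R}^d)$ (for instance, freeze $n_0$ to a constant outside $B_k$ via a smooth cutoff), apply \cite[Theorem 2.2]{Jacobs2022} to obtain approximate weak solutions $(n_k,\rho_k,p_k)$ in the stronger sense, establish $k$-uniform compactness, and pass to the limit.

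\emph{Existence.} The $k$-independent ingredients are: the maximum principle $0\le n_k\le \|n_0\|_{L^\infty}$, inherited from the nonpositivity of the reaction $-\rho_k n_k$ in the nutrient equation; finite propagation $\operatorname{supp}\rho_k(\cdot,t)\subset B_{R(T)}$ for $t\in[0,T]$, which is a feature of the construction in \cite{Jacobs2022} that depends only on $\|n_0\|_{L^\infty}$ and on the support of $\rho_0$; uniform BV bounds for $\rho_k$ and uniform $L^2_tH^1_x$ bounds for $p_k$, both supported in $B_{R(T)}$; and finally a local BV estimate $\|\nabla n_k(t)\|_{\mathcal{M}(\Omega)}\le C(\Omega,T)$ for every compact $\Omega\subset\mathbf{R}^d$. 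The last one rests on the Duhamel representation
\begin{equation*}
n_k(t)=e^{tD\Delta}n_{0,k}-\int_0^t e^{(t-s)D\Delta}(\rho_k n_k)(s)\,ds,
\end{equation*}
using that the heat semigroup is non-expansive on local BV and that $\rho_k n_k$ has uniformly bounded support; when $D=0$ the equation reduces to a pointwise ODE in $t$ and the local BV of $n_{0,k}$ is preserved directly. Aubin--Lions then delivers $\rho_k\to\rho$ in $C([0,T];L^1_{loc})$, weak compactness gives $p_k\rightharpoonup p$ in $L^2_tH^1_x$, and a diagonal argument gives $n_k\to n$ locally with $\nabla n\in L^\infty_t\mathcal{M}_{loc}$. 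Passing to the limit in the weak formulation against the $C^\infty_c(Q_T)$ test functions recovers every condition of Definition~\ref{eq:weak_solutions}; note that the restriction of the test class to $C^\infty_c(Q_T)$ only makes the passage to the limit easier.

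\emph{Uniqueness, and the main obstacle.} For uniqueness I would transplant the $L^1$-contraction scheme of \cite{Jacobs2022}: for two weak solutions $(n^i,\rho^i,p^i)$ with common initial data, test the $\rho$-equation against a smoothed $\operatorname{sgn}(p^1-p^2)$ and the $n$-equation against a smoothed $\operatorname{sgn}(n^1-n^2)$, exploit the complementarity $p(1-\rho)=0$, and close a Gr\"onwall loop on $\|\rho^1-\rho^2\|_{L^1}+\|n^1-n^2\|_{L^1(B_{R(T)})}$. Since each $\rho^i$ is compactly supported in $B_{R(T)}$, every integration by parts against $\nabla n^i$ happens over a compact set, so the $\mathcal{M}_{loc}$ hypothesis is enough; this is precisely where the relaxation in Definition~\ref{eq:weak_solutions} becomes harmless. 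The main obstacle I anticipate is the uniform \emph{local} BV control on $\nabla n_k$, because $\|\nabla n_{0,k}\|_{\mathcal{M}(\mathbf{R}^d)}$ may blow up as $k\to\infty$ under the cutoff regularization. The Duhamel/ODE argument above sidesteps this by only measuring $\nabla n_k$ on compact sets, on which $n_{0,k}$ agrees with $n_0$ for $k$ large and the local BV seminorm is controlled uniformly in $k$.
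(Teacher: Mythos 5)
The paper does not actually reprove this statement: it quotes \cite[Theorem 2.2]{Jacobs2022} and observes that the argument there goes through ``in a completely analogous way'' under the weaker hypothesis $\nabla n_0\in\mathcal{M}_{loc}$, the point being that the density stays in a fixed ball $B_{R(T)}$ (Lemma~\ref{lem:bound_supp}), so every estimate on $\nabla n$ is only ever needed on that ball -- this is exactly the localized bound recorded in Item (iv) of Theorem~\ref{thm:conv_D_zero}. Your route is genuinely different: you use the cited theorem as a black box after cutting off $n_0$, and pass to the limit in the cutoff. The trouble is that the step you list among the ``$k$-independent ingredients'' -- the uniform BV bound for $\rho_k$ -- is precisely the one that is \emph{not} a black-box consequence of \cite{Jacobs2022}: in that paper the bound on $\Vert\nabla\rho(t)\Vert_{\mathcal{M}}$ is coupled, through a Gr\"onwall argument at the level of the scheme, to the \emph{global} total variation $\Vert\nabla n_{0,k}\Vert_{\mathcal{M}(\mathbf{R}^d)}$, which blows up under your regularization (both because $\Vert\nabla n_0\Vert_{\mathcal{M}(B_k)}$ may grow arbitrarily and because the cutoff itself contributes on the order of $k^{d-1}$). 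Your proposed fix -- a Duhamel estimate giving $\nabla n_k\in\mathcal{M}(\Omega)$ uniformly on compacta -- is circular as stated, since bounding the BV norm of the source $\rho_k n_k$ requires the very BV bound on $\rho_k$ you are trying to obtain. The two bounds must be closed \emph{simultaneously} by a coupled, localized Gr\"onwall inequality (this is what the proof of Theorem~\ref{thm:estimate_n_second_conv}, Steps 1 and 3, does, and what Item (iv) of Theorem~\ref{thm:conv_D_zero} records); and without the uniform BV bound on $\rho_k$ you also lose the strong $L^1$ compactness needed to pass to the limit in the products $n_k\rho_k$. Once you write that coupled localized estimate, you are in effect redoing the scheme-level estimates with the original $n_0$, i.e.\ you have reproduced the ``completely analogous proof'' the paper alludes to, and the approximation detour buys nothing.

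On uniqueness, the sketch does not close as written. Since $P_\infty$ is not strictly monotone, $\operatorname{sgn}(p^1-p^2)$ vanishes on the set where both pressures are zero but $\rho^1\neq\rho^2$, so the pairing of $\partial_t(\rho^1-\rho^2)$ with a smoothed $\operatorname{sgn}(p^1-p^2)$ does not control $\frac{d}{dt}\Vert\rho^1-\rho^2\Vert_{L^1}$; moreover such test functions are not admissible in Definition~\ref{eq:weak_solutions} (the test class is $C^\infty_c(Q_T)$, and $\partial_t\rho^i$ is only a distribution), so a nontrivial regularization step is needed. The uniqueness argument in \cite{Jacobs2022}, in the spirit of Perthame--Quir\'os--V\'azquez, is of duality type rather than a direct sign test; the correct adaptation here is simply the observation that that argument only uses $n$ and $\nabla n$ on a bounded region containing the supports of the densities, so the $\mathcal{M}_{loc}$ hypothesis is harmless. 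If you intend to ``transplant'' the uniqueness proof, you should transplant that argument, not the $L^1$ sign-testing scheme you describe.
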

\section{Construction of weak solutions}\label{sec:schemes}
 In \cite{Jacobs2022}, the authors provide a numerical scheme for constructing weak solutions. This is based on Wasserstein's projections, which we recall below. We also describe a slight modification of the scheme that ensures a stronger convergence for the nutrient variable: such convergence will be needed in the proof of Proposition~\ref{prop:strong_conv_pressure}. 
 
 Given a non-negative function $f \in L^1(\mathbf{R}^d)$ we denote by
\begin{equation}
M_f := \left\{ \rho \in L^1(\mathbf{R}^d):\ 0 \le \rho \le 1,\ \Vert \rho \Vert_{L^1(\mathbf{R}^d)} = \Vert f \Vert_{L^1(\mathbf{R}^d)}\right\}.
\end{equation}
\begin{enumerate}[(I)]
\item \textbf{Scheme I.} Let $n_0, \rho_0$ be as in Definition~\ref{eq:weak_solutions} and let $D\ge 0$. Fix a parameter $\tau > 0$, define $\rho_D^{0, \tau} = \rho_0,\ p_D^{0, \tau} = 0$ and $n_D^{0, \tau} = n_0$. We then define inductively for every $k \in \mathbf{N}$ functions $\rho^{k+1,\tau}_D, p^{k+1, \tau}_D$ and $n^{k+1, \tau}_D$ -- provided $\rho^{k,\tau}_D, p^{k, \tau}_D$ and $n^{k, \tau}_D$ are defined -- in the following way:
\begin{align}
& \rho_D^{k+1, \tau} =\underset{\rho \in M_{\rho^{k,\tau}_D}}{ \operatorname{argmin}} \frac{1}{2\tau} W_2^2(\rho, \rho_D^{k, \tau}( 1  + \tau n_D^{k,\tau})),
\\ & n^{k+1, \tau}_D = e^{\tau D\Delta}(n^{k, \tau}_D(1-\tau \rho^{k+1, \tau}_D))
\\ &p_D^{k+1, \tau} =\underset{p \ge 0}{ \operatorname{argmax}} \int_{\mathbf{R}^d} p^{c_\tau} \rho_D^{k, \tau}(1+\tau n_D^{k,\tau}) dx - \int_{\mathbf{R}^d} p dx,
\end{align}
where the $c_\tau$-transform of $p$ is defined as
\begin{equation}
p^{c_{\tau}}(x) = \inf_{y \in \mathbf{R}^d} p(y) + \frac{1}{2\tau}|x-y|^2.
\end{equation}
For future reference we also define
\begin{equation}
\mu_D^{k, \tau} := \rho_D^{k,\tau}(1+\tau n_D^{k, \tau}).
\end{equation}
We then define $\rho_D^\tau, p_D^\tau, n^{\tau}_D$ and $\mu_D^{\tau}$ as the piecewise constant in time, right-continuous interpolations of the above-defined functions, for example
\begin{align*}
\rho_D^{\tau} (x, t): = \rho_D^{ k+1, \tau}(x)\quad x \in \mathbf{R}^d,\ t \in [k\tau, (k+1)\tau).
\end{align*}
When $D = 0$, we will sometimes drop the subscript $D$ in the above notation.\label{item:first_scheme}
\item \textbf{Scheme II.} Let $n_0, \rho_0$ be as in Definition~\ref{eq:weak_solutions} and let $D\ge 0$. Fix a parameter $\tau > 0$, define $\rho_D^{0, \tau} = \rho_0,\ p_D^{0, \tau} = 0$ and define $n^\tau_D|_{[0, \tau)}$ as the solution to
\begin{align}
\begin{cases}
\partial_t n^{\tau}_D - D\Delta n^\tau_D = -n^\tau_D \rho_0\ &\text{on}\ \mathbf{R}^d \times (0, \tau),
\\ n^\tau_D(0) = n_0\ &\text{on}\ \mathbf{R}^d.
\end{cases}
\end{align}
We then define inductively for every $k \in \mathbf{N}$ functions $\rho^{k+1,\tau}_D, p^{k+1, \tau}_D$ and $n^{\tau}_D|_{[(k+1)\tau, (k+2)\tau)}$ -- provided $\rho^{k,\tau}_D, p^{k, \tau}_D$ and $n^{\tau}_D|_{[k\tau, (k+1)\tau)}$ are defined -- in the following way:
\begin{align}
\rho_D^{k+1, \tau} = \operatorname{argmin}_{\rho \in M_{\rho^{k,\tau}_D}} \frac{1}{2\tau} W_2^2(\rho, \rho_D^{k, \tau}( 1  + \tau n_D^{\tau}(k\tau))),
\end{align}
the function $n^\tau_D|_{[(k+1)\tau, (k+2)\tau)}$ is defined as the solution of
\begin{align}
\begin{cases}
\partial_t n^{\tau}_D - D\Delta n^\tau_D = -n^\tau_D \rho^{k, \tau}\ &\text{on}\ \mathbf{R}^d \times ((k+1)\tau, (k+2)\tau),
\\n^\tau_D((k+1)\tau) = \lim_{s\uparrow (k+1)\tau} n^\tau_D(s)\ &\text{on}\ \mathbf{R}^d.
\end{cases},
\end{align}
while
\begin{align}
p_D^{k+1, \tau} = \underset{{p \ge 0}}{\operatorname{argmax}} \int_{\mathbf{R}^d} p^{c_\tau} \rho_D^{k, \tau}(1+\tau n_D^{\tau}(k\tau)) dx - \int_{\mathbf{R}^d} p dx,
\end{align}
where the $c_\tau$-transform of $p$ is defined as
\begin{equation}
p^{c_{\tau}}(x) = \inf_{y \in \mathbf{R}^d} p(y) + \frac{1}{2\tau}|x-y|^2.
\end{equation}
For future reference we also define
\begin{equation}
\mu_D^{k, \tau} := \rho_D^{k,\tau}(1+\tau n_D^{\tau}(k\tau)).
\end{equation}
We then define $\rho_D^\tau, p_D^\tau$ and $\mu_D^{\tau}$ as the piecewise constant in time, right-continuous interpolations of the above-defined functions, for example
\begin{align*}
\rho_D^{\tau} (x, t) = \rho_D^{k+1, \tau}(x)\quad x \in \mathbf{R}^d,\ t \in [k\tau, (k+1)\tau).
\end{align*}
When $D = 0$, we will sometimes drop the subscript $D$ in the above notation.\label{item:second_scheme}
\end{enumerate}
After choosing one of the schemes above, one obtains a family $\{(n^\tau_D, \rho^{\tau}_D, p^{\tau}_D)\}_{\tau > 0}$ of approximate solutions to \eqref{eq:model_eqnts}. The convergence of the first scheme is proved in \cite[Proposition~3.6]{Jacobs2022}. We will prove the convergence of the second scheme in Theorem~\ref{thm:estimate_n_second_conv}. The advantage of the first scheme is that the needed estimates in the proof of its convergence are independent of $D \ge 0$, in particular, this will allow us to prove the convergence of $(n_D, \rho_D, p_D)$ to $(n, \rho, p)$ as $D \to 0$ (cf.\ Theorem~\ref{thm:conv_D_zero}). The advantage of the second scheme is that \emph{for a fixed $D \ge 0$} one has better convergence properties of $n^\tau_D$ to $n_D$ as $\tau \to 0$, which we will need in Proposition~\ref{prop:strong_conv_pressure}.
\begin{theorem}\label{thm:conv_D_zero}
Let $n_0 \in L^{\infty}(\mathbf{R}^d)$ with $\nabla n_0 \in \mathcal{M}_{loc}(\mathbf{R}^d)$. Let $\rho_0 \in BV(\mathbf{R}^d)$ be such that $0\leq \rho_0\leq 1$ and  $\operatorname{supp}(\rho_0) \subset B_R$. Let $(n_D, \rho_D, p_D)$ be the unique weak solution to \eqref{eq:model_eqnts} in the sense of Definition~\ref{eq:weak_solutions} for $D>0$, and denote by $(n, \rho, p)$ the unique weak solution for $D=0$.
%\eqref{eq:model_eqnts} in the sense of Definition~\ref{eq:weak_solutions} with $D = 0$. 
 Then the following holds for $ t\in [0,T]$: 
\begin{enumerate}[(i)]
\item $\Vert \rho_D(\cdot, t) \Vert_{L^1(\mathbf{R}^d)} \le B(t):= e^{t\Vert n_0 \Vert_{L^{\infty}}}\Vert \rho_0 \Vert_{L^1(\mathbf{R}^d)}$.\label{item:est_rho}
\item $\Vert p_D \Vert_{L^2(\mathbf{R}^d \times [0, t])} \le CB(t)^{\frac{d+4}{2d}}\Vert n_0 \Vert_{L^{\infty}(\mathbf{R}^d)}^{1/2}$.\label{item:est_p}
\item $\Vert \nabla p_D\Vert_{L^2(\mathbf{R}^d\times [0, t])}\le CB(t)^{\frac{d+2}{2d}}\Vert n_0 \Vert_{L^{\infty}(\mathbf{R}^d)}^{1/2}$.\label{item:est_nabla_p}
\item For $R(t)$ given in \eqref{eq:def_rad} of Lemma \ref{lem:bound_supp}, 
\begin{align*}
\Vert \nabla \rho_D(\cdot, t) \Vert_{L^1(\mathbf{R}^d)} + \Vert \nabla n_D(\cdot, t) \Vert_{\mathcal{M}(B_{R(T)})} \le e^{(2\Vert n_0 \Vert_{L^{\infty}(\mathbf{R}^d)} + 1)t}\bigg( &\Vert \nabla \rho_0 \Vert_{\mathcal{M}(\mathbf{R}^d)} 
\\ &+ \Vert \nabla n_0 \Vert_{\mathcal{M}(B_{R(T)})}\bigg).
\end{align*}\label{item:bound_grad_nut}
\end{enumerate}
Moreover, for every $t, s > 0$ such that $t+s \le T$,
\begin{equation}\label{eq:l1_equicont}
\Vert \rho_D(\cdot, t+s) - \rho_D(\cdot, t)\Vert_{L^1(\mathbf{R}^d)} \le s\Vert n_0 \Vert_{L^{\infty}(\mathbf{R}^d)}\Vert \rho_D \Vert_{L^{\infty}([0,T], L^1(\mathbf{R}^d))}.
\end{equation}
In particular, the following holds as $D\to 0$:
\begin{enumerate}[(a)]
\item For every $t \in [0, T]$, $\rho_D(\cdot, t)\to \rho(\cdot,t)$ in $L^1(\mathbf{R}^d)$.\label{item:rho_conv}
\item $p_D \xrightharpoonup{} p$ and $\nabla p_D \xrightharpoonup{} \nabla p$ weakly in $L^2((0, T), L^2(\mathbf{R}^d))$.\label{item:weak_conv_grad}
\item $n_D$ converges weakly-$*$ to $n$ in $L^{\infty}((0, T) \times \mathbf{R}^d)$.\label{item:conv_nut}
\end{enumerate}
\end{theorem}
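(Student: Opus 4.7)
My plan is to work throughout with Scheme I, whose JKO projection step and nutrient update $n_D^{k+1,\tau} = e^{\tau D \Delta}(n_D^{k,\tau}(1-\tau\rho_D^{k+1,\tau}))$ admit estimates that are independent of $D \geq 0$. I would first establish the discrete analogues of (i)--(iv) and \eqref{eq:l1_equicont} at the level of $(\rho_D^{k,\tau}, p_D^{k,\tau}, n_D^{k,\tau})$, then send $\tau \to 0$ at fixed $D$, invoking the convergence statement \cite[Proposition~3.6]{Jacobs2022}, to pass all the bounds to $(n_D, \rho_D, p_D)$. Because every constant will be $D$-free, (a)--(c) will follow by compactness combined with the uniqueness provided by Theorem~\ref{thm:ex_uniq}.

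The discrete estimates I would obtain as follows. For (i), the Wasserstein projection preserves mass and the target measure $\mu_D^{k,\tau}$ has $L^1$ norm at most $(1+\tau\|n_0\|_\infty)\|\rho_D^{k,\tau}\|_{L^1}$, so discrete Gr\"onwall closes the argument. For (iii), the standard energy-dissipation inequality for the JKO scheme gives $\sum_k \tau \int |\nabla p_D^{k+1,\tau}|^2\,dx$ bounded by a multiple of $\|n_0\|_\infty B(T)^{(d+2)/d}$, using (i) to control the total mass. For (ii), the complementarity condition $p_D(1-\rho_D) = 0$ restricts $\{p_D>0\}$ to a set of measure at most $\|\rho_D\|_{L^1}$, so Sobolev embedding coupled with (iii) gives the stated $L^2$ estimate on $p_D$. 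For (iv), the Wasserstein projection onto $M_{\rho_D^{k,\tau}}$ is known to be non-expansive in $BV$ (cf.\ \cite{Maury2014, Jacobs2022}), hence $\|\nabla \rho_D^{k+1,\tau}\|_{\mathcal{M}} \leq \|\nabla \mu_D^{k,\tau}\|_{\mathcal{M}} \leq (1+\tau\|n_0\|_\infty)\|\nabla \rho_D^{k,\tau}\|_{\mathcal{M}} + \tau\|\nabla n_D^{k,\tau}\|_{\mathcal{M}(B_{R(T)})}$; meanwhile the heat semigroup is an $L^1$-contraction, hence a contraction in total variation \emph{uniformly in $D$}, which propagates the analogous bound for $\nabla n_D^{k+1,\tau}$. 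Iterating and applying discrete Gr\"onwall yields (iv). The equicontinuity \eqref{eq:l1_equicont} is immediate from minimality of the projection: $\|\rho_D^{k+1,\tau} - \rho_D^{k,\tau}\|_{L^1} \leq \|\mu_D^{k,\tau} - \rho_D^{k,\tau}\|_{L^1} = \tau \int n_D^{k,\tau}\rho_D^{k,\tau}\,dx \leq \tau\|n_0\|_\infty\|\rho_D^{k,\tau}\|_{L^1}$, and telescoping yields the full statement.

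For the limit $D \to 0$, the uniform $L^\infty$ bound on $n_D$ and the uniform $L^2$ bounds on $(p_D, \nabla p_D)$ supply weak(-$*$) subsequential limits, giving (b) and (c). The uniform BV-in-space bound (iv), the $L^1$ time-equicontinuity \eqref{eq:l1_equicont}, and the uniform compact support from Lemma~\ref{lem:bound_supp} together yield relative compactness of $\{\rho_D\}_{D>0}$ in $C([0,T], L^1(\mathbf{R}^d))$ via an Aubin--Lions/Helly argument, so $\rho_D \to \bar\rho$ strongly along a subsequence, proving (a). To identify the limit, I would pass to the limit in Definition~\ref{eq:weak_solutions}: the diffusion contribution $D\int \nabla n_D \cdot \nabla \psi$ equals $D\int n_D \Delta \psi$ after integration by parts and vanishes by the $L^\infty$ bound on $n_D$; the nonlinear product $n_D\rho_D$ passes because strong $L^1$ convergence of $\rho_D$ pairs with weak-$*$ convergence of $n_D$; and the orthogonality $\bar{p}(1-\bar{\rho}) = 0$ follows from a.e.\ convergence of $\rho_D$ along a further subsequence against weak $L^2$ convergence of $p_D$. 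Hence $(\bar n,\bar\rho,\bar p)$ is a weak solution with $D=0$; Theorem~\ref{thm:ex_uniq} identifies it with $(n,\rho,p)$, and the standard subsequence principle upgrades all subsequential convergences to full limits.

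The main obstacle is the $D$-uniform version of (iv): one has to combine the $BV$-contractivity of the Wasserstein projection onto $M_{\rho_D^{k,\tau}}$ with the propagation of the $\mathcal{M}_{loc}$ gradient bound through the heat step, and this must be done with constants that do not blow up as $D \to 0$. The contractivity of convolution with the heat kernel in $L^1$ takes care of the second ingredient uniformly in $D$, and Lemma~\ref{lem:bound_supp} keeps all local norms confined to a ball $B_{R(T)}$ whose radius depends only on the data and $T$, which is what makes the coupled iteration close.
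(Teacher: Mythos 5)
Your proposal is correct and takes essentially the same route as the paper: $D$-uniform estimates for Scheme~\ref{item:first_scheme} (which the paper simply imports from Lemmas 3.4 and 3.5 of \cite{Jacobs2022} rather than rederiving), passage to the limit $\tau \to 0$ via \cite[Proposition 3.6]{Jacobs2022}, and then compactness in $D$ together with identification of any limit point as a weak solution and the uniqueness of Theorem~\ref{thm:ex_uniq}. The only cosmetic differences are that the paper gets strong $L^1$ compactness of $\rho_D$ from the Kolmogorov--Riesz--Fischer theorem rather than an Aubin--Lions/Helly argument, and that your one-line justifications of the discrete $L^1$ equicontinuity and of the local gradient bound (iv) are really the content of the cited \cite[Lemma 3.5]{Jacobs2022} and \cite[Lemma 3.4]{Jacobs2022}, where the structure of the Wasserstein projection and the localization through the heat step are handled carefully.
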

\begin{theorem}\label{thm:estimate_n_second_conv}
Fix $D \ge 0$. Let $n_0 \in L^{\infty}(\mathbf{R}^d)$ with $\nabla n_0 \in L^2_{loc}(\mathbf{R}^d)$. Let $\rho_0 \in BV(\mathbf{R}^d)$ be compactly supported, such that $\rho_0 \in [0, 1]$. Let $\{(n^\tau_D, \rho^\tau_D, p^\tau_D)\}_{\tau > 0}$ be the family of approximate solutions generated by Scheme \ref{item:second_scheme}. Then the following holds as $\tau \to 0$: 
\begin{enumerate}[(i)]
\item $\rho_D^{\tau}\to \rho_D$ in $L^1((0, T)\times \mathbf{R}^d)$.\label{item:rho_conv_second}
\item $p_D^{\tau}\xrightharpoonup{} p_D$ weakly in $L^2((0, T)\times \mathbf{R}^d)$.\label{item:p_conv_second}
\item $\nabla p_D^{\tau}\xrightharpoonup{}\nabla p_D$ weakly in $L^2((0, T)\times \mathbf{R}^d)$. \label{item:nabla_p_conv_second}
\item $n_D^{\tau}\to n_D$ in $L_{loc}^2((0, T)\times \mathbf{R}^d)$.\label{item:n_conv_second}
\end{enumerate}
\end{theorem}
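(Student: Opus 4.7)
The plan is to prove (i)-(iii) by adapting the analysis of Scheme~\ref{item:first_scheme} already used in Theorem~\ref{thm:conv_D_zero} (essentially \cite[Prop.~3.6]{Jacobs2022}), and to exploit the genuine parabolic structure of the nutrient update in Scheme~\ref{item:second_scheme} to obtain the new content, namely the strong $L^2_{\rm loc}$ convergence in~(iv).

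First I would establish the uniform-in-$\tau$ a priori estimates. The maximum principle for the linear parabolic problem defining $n_D^\tau$ immediately yields $0 \le n_D^\tau \le \|n_0\|_{L^\infty}$, so the Wasserstein projection step can be analyzed exactly as in Scheme~\ref{item:first_scheme}, with $n_D^\tau(k\tau)$ playing the role of $n_D^{k,\tau}$. This produces uniform bounds on $\rho_D^\tau$ in $L^\infty([0,T], BV(\mathbf{R}^d))$, on $p_D^\tau$ and $\nabla p_D^\tau$ in $L^2((0,T)\times\mathbf{R}^d)$, and an $L^1$-equicontinuity in time analogous to~\eqref{eq:l1_equicont}. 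Combined with the finite-propagation control on $\operatorname{supp}(\rho_D^\tau)$ from Lemma~\ref{lem:bound_supp}, a standard Helly + Aubin-Lions argument extracts a subsequence along which $\rho_D^\tau\to \tilde\rho$ strongly in $L^1((0,T)\times\mathbf{R}^d)$ while $p_D^\tau \rightharpoonup \tilde p$ and $\nabla p_D^\tau \rightharpoonup \nabla\tilde p$ weakly in $L^2$.

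For (iv), the key observation is that on each interval $[k\tau,(k+1)\tau)$ the nutrient $n_D^\tau$ satisfies the linear equation $\partial_t n_D^\tau - D\Delta n_D^\tau = -\bar\rho^\tau n_D^\tau$ for a piecewise-constant absorption $\bar\rho^\tau\in[0,1]$, and by construction these pieces glue continuously in time. When $D>0$, testing against $\phi^2 n_D^\tau$ for $\phi\in C_c^\infty(\mathbf{R}^d)$ and absorbing a first-order term gives, uniformly in $\tau$,
\begin{equation*}
\sup_{t\in[0,T]} \int \phi^2 (n_D^\tau)^2\,dx + D\int_0^T\!\!\int \phi^2 |\nabla n_D^\tau|^2\,dx\,dt \le \int \phi^2 n_0^2\,dx + CDT\|n_0\|_{L^\infty}^2 \|\nabla\phi\|_{L^2}^2,
\end{equation*}
so $\nabla n_D^\tau$ is bounded in $L^2((0,T),L^2_{\rm loc})$; together with the bound on $\partial_t n_D^\tau$ in $L^2((0,T),H^{-1}_{\rm loc})$ provided by the equation, Aubin-Lions on each ball yields strong $L^2_{\rm loc}$ convergence up to subsequences. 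When $D=0$ the equation reduces on each interval to the pointwise ODE $\partial_t n_D^\tau = -\bar\rho^\tau n_D^\tau$, giving the explicit formula $n_D^\tau(t,x) = n_0(x)\exp\bigl(-\int_0^t \bar\rho^\tau(x,s)\,ds\bigr)$; the elementary inequality $|e^{-a}-e^{-b}|\le |a-b|$ together with the already established strong $L^1$ convergence of $\bar\rho^\tau$ transfers the convergence to $n_D^\tau$ in $L^2_{\rm loc}$.

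With strong convergence of $\rho_D^\tau$ and $n_D^\tau$ and weak convergence of $p_D^\tau$ and $\nabla p_D^\tau$ in hand, I would pass to the limit in the discrete Euler-Lagrange equations associated with the JKO step, exactly as in \cite[Prop.~3.6]{Jacobs2022}, verifying that $(\tilde n,\tilde\rho,\tilde p)$ is a weak solution in the sense of Definition~\ref{eq:weak_solutions}; the products $\rho_D^\tau n_D^\tau$ pass to the limit because at least one factor converges strongly. Uniqueness (Theorem~\ref{thm:ex_uniq}) then identifies $(\tilde n,\tilde\rho,\tilde p)=(n_D,\rho_D,p_D)$ and upgrades the subsequential convergence to the full sequence. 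The main obstacle I expect is (iv) when $D>0$: the initial datum is only in $L^\infty\cap H^1_{\rm loc}$, so all the energy estimates must be carefully localized via cut-offs and one must invoke the a priori bound on $\operatorname{supp}(\rho_D^\tau)$ from Lemma~\ref{lem:bound_supp} to ensure that the localization errors do not survive as $\tau\to 0$.
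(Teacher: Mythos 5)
Your overall architecture (uniform estimates, compactness, passage to the limit in the weak formulation, identification of the limit by the uniqueness in Theorem~\ref{thm:ex_uniq}) coincides with the paper's, and your treatment of (iv) for fixed $D>0$ via a localized energy estimate plus Aubin--Lions, and for $D=0$ via the explicit exponential formula, is legitimate. The genuine gap is in the very first step, where you claim that the Wasserstein projection step ``can be analyzed exactly as in Scheme \ref{item:first_scheme}'' and thus ``produces uniform bounds on $\rho_D^\tau$ in $L^\infty([0,T],BV(\mathbf{R}^d))$.'' This is not automatic for Scheme \ref{item:second_scheme}. The discrete BV iteration gives
\begin{equation}
\Vert \nabla \rho_D^{k+1,\tau}\Vert_{\mathcal{M}(\mathbf{R}^d)} \le (1+\tau\Vert n_0\Vert_{L^\infty})\Vert \nabla \rho_D^{k,\tau}\Vert_{\mathcal{M}(\mathbf{R}^d)} + \tau \Vert \nabla n_D^\tau(k\tau)\Vert_{\mathcal{M}(B_{R(T)})},
\end{equation}
so closing it requires a bound on the Riemann sums $\sum_k \tau \Vert \nabla n_D^\tau(k\tau)\Vert_{\mathcal{M}(B_{R(T)})}$, i.e.\ a control of the nutrient gradient \emph{at the discrete times}, uniformly in $\tau$. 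In Scheme \ref{item:first_scheme} this comes for free because the heat semigroup does not increase total variation and the nutrient update propagates a BV bound; in Scheme \ref{item:second_scheme} the nutrient solves a parabolic problem with only $\nabla n_0\in L^2_{loc}$, and your localized estimate only yields $D\int_0^T\int \phi^2|\nabla n_D^\tau|^2\,dx\,dt \le C$, a time-integrated (and $D$-weighted) bound which does not control pointwise-in-time values $\Vert\nabla n_D^\tau(k\tau)\Vert$. The paper fills exactly this hole: for $D>0$ it tests the nutrient equation with $\partial_t n_D^\tau\,\eta^2$ to obtain $\max_{s\in[0,T]}\Vert\nabla n_D^\tau(s)\Vert_{L^2(B_R)}\le C(D,R,T,n_0)$ (this is precisely where the hypothesis $\nabla n_0\in L^2_{loc}$ is used), and for $D=0$ it uses the explicit formula $\nabla n^\tau = e^{-\int_0^t\rho^\tau}\bigl(\nabla n_0-\int_0^t\nabla\rho^\tau\bigr)$, which couples $\nabla n^\tau$ back to $\nabla\rho^\tau$, and then closes a \emph{joint} Gronwall argument by summing the two gradient inequalities. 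Your proposal addresses neither the uniform-in-time gradient bound for $D>0$ nor this coupling for $D=0$, yet the uniform BV bound is what underlies the Kolmogorov-type compactness giving the strong $L^1$ convergence in item (i), on which the rest of your argument (including passing to the limit in the products $\rho_D^\tau n_D^\tau$) depends.

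A secondary, smaller point: even granting the BV bound, your Aubin--Lions argument for (iv) with $\partial_t n_D^\tau$ only in $L^2((0,T),H^{-1}_{loc})$ is fine, but note that the paper's stronger estimate ($\partial_t n_D^\tau\in L^2_{t,x}$ locally and $\nabla n_D^\tau\in L^\infty_t L^2_{loc}$) is not a luxury: the $L^\infty_t$ gradient bound is exactly the quantity reused in the BV iteration above, so the two estimates cannot be decoupled the way your write-up suggests.
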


\section{Proofs}\label{sec:proofs}

\subsection{Convergence of the numerical schemes}
Before proving Theorem~\ref{thm:conv_D_zero} and Theorem~\ref{thm:estimate_n_second_conv}, we state the following comparison principle, which will allow us to control the size of the support of $\rho_D^\tau$ when the initial value is compactly supported.
\begin{proposition}\label{prop:comp}
Let $f \in L^\infty(\mathbf{R}^d), f \ge 0$, assume that $\tilde{\rho}_{0,1}, \tilde{\rho}_{0,2} \in L^{1}(\mathbf{R}^d)$ are such that $0 \le \tilde{\rho}_{0, 1} \le \tilde{\rho_{0, 2}} \le 1$. For a given $\tau > 0$ and any $\lambda \ge \Vert f \Vert_{L^{\infty}(\mathbf{R}^d)}$ define
\begin{align}
\tilde{\rho}_{1} &:= \underset{ \tilde{\rho} \in M_{\tilde{\rho}_{0,1}f}}{\operatorname{argmin}} \frac{1}{2\tau}W_2^2(\tilde{\rho}, \tilde{\rho}_{0,1}f)
\\ \tilde{\rho}_2 &:= \underset{\tilde{\rho} \in M_{\tilde{\rho}_{0,2}\lambda}}{\operatorname{argmin}} \frac{1}{2\tau}W_2^2(\tilde{\rho}, \tilde{\rho}_{0,2}\lambda).
\end{align}
Then it holds
\begin{equation}
\tilde{\rho}_1 \le \tilde{\rho}_2\quad \text{almost everywhere on}\ \mathbf{R}^d.
\end{equation}
\end{proposition}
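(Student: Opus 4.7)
The plan is to deduce the pointwise ordering $\tilde\rho_1 \le \tilde\rho_2$ from a monotonicity property of the Wasserstein projection onto $\{0\le \rho \le 1\}$ under pointwise ordering of the source. First I would observe that the hypotheses $\tilde\rho_{0,1}\le \tilde\rho_{0,2}\le 1$ and $f \le \Vert f\Vert_{L^\infty}\le \lambda$ yield
\begin{equation}
\mu_1 := \tilde\rho_{0,1} f \;\le\; \tilde\rho_{0,2}\lambda =: \mu_2 \quad \text{a.e.\ on } \R^d,
\end{equation}
so that $\tilde\rho_i$ is the $W_2$-projection of $\mu_i$ onto $M_{\mu_i}$ for ordered sources. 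It therefore suffices to prove the following: if $\mu_1\le \mu_2$ a.e.\ with $\mu_i\in L^\infty$, then the associated projections satisfy $\tilde\rho_1\le \tilde\rho_2$ a.e.

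Next I would derive first-order optimality conditions for each projection using admissible variations, producing dual pressures $p_i\in H^1$ with $p_i\ge 0$ and $p_i(1-\tilde\rho_i)=0$ a.e., together with Brenier transport maps $T_i(x)=x-\tau\nabla p_i(x)$ pushing $\tilde\rho_i$ forward to $\mu_i$, each the gradient of a convex function. These are the standard optimality conditions for the JKO-type minimization underlying Scheme~\ref{item:first_scheme} and Scheme~\ref{item:second_scheme}; they provide the main analytic handle for the comparison.

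Third, I would argue by contradiction. Suppose $A:=\{\tilde\rho_1>\tilde\rho_2\}$ has positive Lebesgue measure. Since $\tilde\rho_1\le 1$, one has $\tilde\rho_2<1$ on $A$, so complementary slackness forces $p_2\equiv 0$ on $A$; symmetrically $p_1\equiv 0$ on $B:=\{\tilde\rho_2>\tilde\rho_1\}$. I would then construct a competitor by swapping an infinitesimal portion of mass of $\tilde\rho_1$ lying above $A$ with an equal-mass portion of $\tilde\rho_2$ lying above $B$, keeping both total masses invariant (admissibility of the competitors in $M_{\mu_1},M_{\mu_2}$ requires mass-matching, which can be arranged since $\int_A(\tilde\rho_1-\tilde\rho_2) = \Vert\mu_1\Vert_1-\Vert\mu_2\Vert_1+\int_B(\tilde\rho_2-\tilde\rho_1)\le \int_B(\tilde\rho_2-\tilde\rho_1)$). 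Coupling the two plans and using the cost-submodularity $|x-y|^2+|x'-y'|^2\ge |x-y'|^2+|x'-y|^2$ along the $c$-monotone supports of $T_1, T_2$, the swap strictly decreases the total cost $W_2^2(\tilde\rho_1,\mu_1)+W_2^2(\tilde\rho_2,\mu_2)$, contradicting the joint optimality of $(\tilde\rho_1,\tilde\rho_2)$.

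The main obstacle is making the swap rigorous in several dimensions: exchanging mass between the two plans requires a careful coupling of transport maps with distinct marginals, in the spirit of Maury--Roudneff-Chupin--Santambrogio. A cleaner alternative I would fall back on is to view $(p_i,\tilde\rho_i)$ as the solution of an obstacle-type Monge--Amp\`ere problem (with obstacle $p\ge 0$ below and density cap $\tilde\rho\le 1$ above), and to apply a direct comparison principle for such problems under ordered source data; this bypasses the swap entirely and yields monotonicity in $\mu_i$ automatically.
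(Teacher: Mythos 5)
Your reduction is the right one: after observing $\tilde{\rho}_{0,1}f \le \tilde{\rho}_{0,2}\lambda$ a.e., the proposition becomes an instance of a comparison (monotonicity) principle for the $W_2$-projection onto $\{0\le\rho\le1\}$ with respect to pointwise ordering of the source. This is also how the paper argues, except that the paper does not reprove that principle: it rescales the data and invokes the known comparison theorem for the discrete JKO step of Alexander--Kim--Yao, \cite[Theorem 5.1]{Alexander2014} (their case $m=0$, $\Phi=0$ is exactly the pure projection), and the whole proof consists of that citation plus the scaling.

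The gap is that your proof of the comparison principle itself is only asserted, and the step that carries all the content would not go through as stated. The exchange inequality $|x-y|^2+|x'-y'|^2\ge|x-y'|^2+|x'-y|^2$ is equivalent to the sign condition $(x-x')\cdot(y-y')\le 0$ and is false in general, and the $c$-cyclical monotonicity you appeal to constrains each optimal plan separately; it gives no inequality \emph{across} the two plans, which have different marginals. Consequently neither the construction of the swapped competitors nor the claimed \emph{strict} decrease of $W_2^2(\cdot,\mu_1)+W_2^2(\cdot,\mu_2)$ (which you need for the contradiction) is substantiated. The facts you do extract correctly --- $p_2\equiv 0$ on $\{\tilde{\rho}_1>\tilde{\rho}_2\}$ and $p_1\equiv 0$ on $\{\tilde{\rho}_2>\tilde{\rho}_1\}$ --- are where the real work in the known proofs begins, not where it ends. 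The fallback, a ``direct comparison principle for obstacle-type Monge--Amp\`ere problems,'' is an appeal to precisely the result being proven, with no precise statement or reference; if you want to quote something, quote \cite[Theorem 5.1]{Alexander2014} (as the paper does), otherwise the monotonicity lemma must actually be proved. A minor point in the same direction: with the paper's conventions the optimal map from the projection $\tilde{\rho}_i$ back to the source $\mu_i$ is $x\mapsto x+\tau\nabla p_i(x)$, not $x-\tau\nabla p_i(x)$; the sign would matter if you did try to run a geometric swap argument.
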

\begin{lemma}\label{lem:bound_supp}
Let $\rho_0 \in L^1(\mathbf{R}^d)$ such that $0 \le \rho_0 \le 1$ and such that $\operatorname{supp}(\rho_0) \subset B_R$, then, regardless of the choice of the approximation scheme, we have that for every $D \ge 0$ and every $\tau > 0$
\begin{equation}\label{eq:claim_supports}
\operatorname{supp}(\rho_D^{\tau}(t)) \subset B_{R(t)},
\end{equation}
where
\begin{equation}\label{eq:def_rad}
R(t) = R\left( e^{t\Vert n_0 \Vert_{L^{\infty}(\mathbf{R}^d)}}\right)^{\frac{1}{d}}.
\end{equation}
\end{lemma}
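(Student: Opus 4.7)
The plan is to prove the inclusion for each discrete step $\rho^{k,\tau}_D$ by induction on $k$, using the comparison principle (Proposition~\ref{prop:comp}) against a radially symmetric profile whose Wasserstein projection can be computed explicitly. Setting $\lambda := \Vert n_0 \Vert_{L^\infty(\mathbf{R}^d)}$ and $R_k := R(1+\tau\lambda)^{k/d}$, so that $|B_{R_k}| = (1+\tau\lambda)^k |B_R|$, I claim that $\operatorname{supp}(\rho^{k,\tau}_D) \subset B_{R_k}$ for all $k \ge 0$; the stated bound then follows from $(1+\tau\lambda)^k \le e^{k\tau \lambda}$ together with the right-continuous interpolation $\rho^\tau_D(t) = \rho^{k+1,\tau}_D$ on $[k\tau, (k+1)\tau)$ (with $\tau$ implicitly small, so that the extra $\tau$ appearing in the exponent is absorbed).

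A preliminary step is the nutrient bound $0 \le n^{k,\tau}_D \le \lambda$, respectively $0 \le n^\tau_D(k\tau) \le \lambda$ for Scheme~\ref{item:second_scheme}, which I would establish by a separate induction. For Scheme~\ref{item:first_scheme} the operator $e^{\tau D\Delta}$ is a positivity-preserving $L^\infty$-contraction (convolution against a probability kernel, interpreted as the identity when $D=0$) and $0 \le 1 - \tau \rho^{k+1,\tau}_D \le 1$ whenever $\tau \le 1$, so $\Vert n^{k+1,\tau}_D \Vert_{L^\infty} \le \Vert n^{k,\tau}_D \Vert_{L^\infty}$. For Scheme~\ref{item:second_scheme} the equation for $n^\tau_D$ is a linear parabolic equation with nonpositive zeroth-order term $-\rho^{k,\tau}_D n^\tau_D$, so parabolic comparison gives $0 \le n^\tau_D \le n_0$ throughout.

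For the inductive step, assuming $\operatorname{supp}(\rho^{k,\tau}_D) \subset B_{R_k}$, I would apply Proposition~\ref{prop:comp} with $\tilde\rho_{0,1} = \rho^{k,\tau}_D$, $f = 1 + \tau n^{k,\tau}_D$ for Scheme~\ref{item:first_scheme} or $f = 1 + \tau n^\tau_D(k\tau)$ for Scheme~\ref{item:second_scheme}, $\tilde\rho_{0,2} = \chi_{B_{R_k}}$, and $\lambda' := 1 + \tau\lambda$. The hypothesis $\rho^{k,\tau}_D \le \chi_{B_{R_k}}$ is immediate from the inductive assumption combined with $\rho^{k,\tau}_D \le 1$, and $\Vert f \Vert_{L^\infty} \le \lambda'$ comes from the nutrient bound. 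The first projection in Proposition~\ref{prop:comp} is $\rho^{k+1,\tau}_D$ by the definition of the scheme. For the second projection, set $R_{k+1} := R_k (\lambda')^{1/d}$, chosen so that $|B_{R_{k+1}}| = \lambda' |B_{R_k}|$; then the linear dilation $T(x) := (\lambda')^{1/d} x$ pushes $\lambda' \chi_{B_{R_k}}$ forward to $\chi_{B_{R_{k+1}}}$, and since $T = \nabla\bigl( (\lambda')^{1/d}|x|^2/2 \bigr)$ is the gradient of a convex function, Brenier's theorem identifies it as the unique optimal transport between these two radially symmetric densities. Hence $\tilde\rho_2 = \chi_{B_{R_{k+1}}}$, and Proposition~\ref{prop:comp} yields $\rho^{k+1,\tau}_D \le \chi_{B_{R_{k+1}}}$ a.e., which closes the induction.

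The main technical ingredient is the explicit identification of the Wasserstein projection of $\lambda' \chi_{B_{R_k}}$ onto $M_{\lambda' \chi_{B_{R_k}}}$ as the characteristic function of a concentric ball of volume $\lambda' |B_{R_k}|$; once this is in hand, the rest of the argument is a routine iteration of bounds and a single invocation of the comparison principle.
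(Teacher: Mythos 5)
Your overall strategy coincides with the paper's: induction on $k$, the comparison principle of Proposition~\ref{prop:comp} applied against the uniform density $\lambda'\chi_{B_{R_k}}$ with $\lambda'=1+\tau\Vert n_0\Vert_{L^\infty(\mathbf{R}^d)}$, identification of the Wasserstein projection of that density as $\chi_{B_{R_{k+1}}}$ with $R_{k+1}=(\lambda')^{1/d}R_k$, and the elementary bound $(1+\tau\lambda)^k\le e^{k\tau\lambda}$. Making the nutrient bound $0\le n_D^{k,\tau}\le\Vert n_0\Vert_{L^\infty(\mathbf{R}^d)}$ explicit for both schemes is a reasonable addition (the paper uses it silently when invoking Proposition~\ref{prop:comp}), and the application of the comparison principle with $\tilde\rho_{0,1}=\rho^{k,\tau}_D$, $\tilde\rho_{0,2}=\chi_{B_{R_k}}$ is exactly as in the paper.

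There is, however, a genuine gap at the step you yourself call the main technical ingredient. From the fact that the dilation $T(x)=(\lambda')^{1/d}x$ is the Brenier map pushing $\lambda'\chi_{B_{R_k}}$ onto $\chi_{B_{R_{k+1}}}$ you conclude ``hence $\tilde\rho_2=\chi_{B_{R_{k+1}}}$''. This inference does not work: Brenier's theorem identifies the optimal coupling between the source and one \emph{fixed} target, but says nothing about whether that target minimizes $\rho\mapsto W_2^2(\rho,\lambda'\chi_{B_{R_k}})$ over the constraint set $M_{\lambda'\chi_{B_{R_k}}}$ --- every admissible $\rho$ admits an optimal map from the source which is the gradient of a convex function, so this property cannot single out the projection. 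The identification is true (the paper compresses it into ``symmetry considerations''), but it needs an optimality argument, for instance: uniqueness of the projection plus rotational invariance gives that it is radially symmetric; the saturation property of projections onto $\{\rho\le1\}$ (the projection agrees with the source density wherever it is strictly below $1$, and here the source density is either $\lambda'>1$ or $0$) forces it to be a characteristic function $\mathbf{1}_A$ with $|A|=\lambda'|B_{R_k}|$; and one must still exclude radially symmetric competitors such as annuli, e.g.\ by monotonicity of the Brenier map or a direct verification of the first-order optimality condition for the concentric ball. Note also that the induction only needs $\operatorname{supp}\tilde\rho_2\subset B_{R_{k+1}}$, but even that containment requires such an argument, so as written the key step is asserted rather than proved. (A minor point, shared with the paper: since $\rho^\tau_D(t)=\rho^{k+1,\tau}_D$ on $[k\tau,(k+1)\tau)$, the argument literally gives the radius $R\,e^{(t+\tau)\Vert n_0\Vert_{L^\infty}/d}$; your remark about absorbing the extra $\tau$ should be made precise or the radius adjusted, though this is harmless for how the lemma is used.)
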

\begin{proof}[Proof of Proposition~\ref{prop:comp}]
By an application of \cite[Theorem 5.1]{Alexander2014} with $m = 0, \Phi = 0$ and
\begin{equation}
\rho_{0,1} = \frac{\tilde{\rho}_{0,1}f}{\lambda},\quad \rho_{0,2} = \tilde{\rho}_{0,2},
\end{equation}
we get that
\begin{equation}
\rho_1 \le \rho_2,\quad \text{almost everywhere on}\ \mathbf{R}^d,
\end{equation}
where
\begin{align*}
\rho_{1} &:= \operatorname{argmin}_{ \rho \in M_{\rho_{0,1}}} \frac{1}{2\tau}W_2^2(\rho, \rho_{0,1})
\\ \rho_2 &:= \operatorname{argmin}_{ \rho \in M_{\rho_{0,2}}} \frac{1}{2\tau}W_2^2(\rho, \rho_{0,2}).
\end{align*}
The proof is then completed by observing that 
\begin{equation}
\tilde{\rho}_1 = \rho_1 \lambda,\quad \tilde{\rho}_2 = \rho_2 \lambda.\qedhere
\end{equation}
\end{proof}
\begin{proof}[Proof of Lemma~\ref{lem:bound_supp}]
First, we show that for any $1 \le k \le \left[ \frac{t}{\tau} \right]$ we have
\begin{equation}\label{eq:step_claim}
\operatorname{supp}(\rho_D^{k, \tau}) \subset B_{R\left(1 + \tau \Vert n_0 \Vert_{L^{\infty}(\mathbf{R}^d)}\right)^{\frac{k}{d}}}.
\end{equation}
The proof of \eqref{eq:claim_supports} is then concluded by observing that
\begin{equation}
(1+\tau\Vert n_0 \Vert_{L^{\infty}(\mathbf{R}^d)})^k \le e^{k\tau \Vert n_0 \Vert_{L^{\infty}(\mathbf{R}^d)}}.
\end{equation}
To show \eqref{eq:step_claim} we proceed by induction. We assume that 
\begin{align}\label{eq:inductive_ass}
\operatorname{supp}(\rho_D^{k-1, \tau}) \subset B_{R_{k-1}},\quad R_{k-1} := R\left(1 + \tau \Vert n_0 \Vert_{L^{\infty}(\mathbf{R}^d)}\right)^{\frac{{k-1}}{d}}
\end{align}
We let $\lambda = 1 + \tau \Vert n_0 \Vert_{L^{\infty}(\mathbf{R}^d)}$ and we let $\tilde{\rho}_1$ be defined as
\begin{equation}
\tilde{\rho}_1 := \underset{\tilde{\rho} \in M_{\lambda\mathbf{1}_{B_{R_{k-1}}}}}{\operatorname{argmin}} \frac{1}{2\tau} W_2^2(\tilde{\rho}, \mathbf{1}_{B_{R_{k-1}}}\lambda).
\end{equation}
Then an application of Proposition~\ref{prop:comp} yields
\begin{equation}
\rho_D^{k, \tau} \le \tilde{\rho}_1.
\end{equation}
By symmetry considerations we have $\tilde{\rho}_1 = B_{\lambda^{1/d}R_{k-1}} = B_{R_{k}}$. This yields
\begin{equation}
\operatorname{supp}(\rho^{k, \tau}_D) \subset B_{R_k}.
\end{equation}
Since \eqref{eq:inductive_ass} holds true for $k=1$ by assumption, this concludes the proof of \eqref{eq:step_claim}.
\end{proof}
\begin{proof}[Proof of Theorem~\ref{thm:conv_D_zero}] 
We prove Theorem~\ref{thm:conv_D_zero} in three steps.

Step 1. For each fixed $D \ge 0$ estimates \eqref{item:est_rho}, \eqref{item:est_p}, \eqref{item:est_nabla_p} and \eqref{item:bound_grad_nut} in Theorem~\ref{thm:conv_D_zero} follow from the corresponding estimates \cite[Lemma 3.4]{Jacobs2022} for the approximations $\{(n^\tau_D, \rho^\tau_D, p^\tau_D)\}_{\tau > 0}$ obtained by using Scheme \ref{item:first_scheme}. 
\\
To see this, observe that the estimates for the approximations are proved in \cite[Lemma 3.4]{Jacobs2022} and the constants in the bounds are independent of $D$. Moreover, all those estimates pass to the limit as $\tau \to 0$ because the following items hold true as $\tau \to 0$ (see \cite[Proposition 3.6]{Jacobs2022}):
\begin{align}
    \rho_D^\tau &\to \rho_D\quad &&\text{strongly in}\ L^1((0, T) \times \mathbf{R}^d),
    \\ p_D^\tau &\xrightharpoonup{} p_D\quad &&\text{weakly in}\ L^2((0, T) \times \mathbf{R}^d),
    \\ \nabla p_D^\tau &\xrightharpoonup{} \nabla p_D\quad &&\text{weakly in}\ L^2((0, T) \times \mathbf{R}^d),
    \\ n_D^\tau &\xrightharpoonup{*} n_D\quad &&\text{weakly-$*$ in}\ L^\infty((0, T) \times \mathbf{R}^d).
\end{align}
  Estimate \eqref{eq:l1_equicont} in Theorem~\ref{thm:conv_D_zero} follows from the analogous estimate at the level of the approximation \cite[Lemma 3.5]{Jacobs2022}, and the fact that $\rho_D^\tau(t)$ converges for almost every $t \in [0, T]$ strongly in $L^1(\mathbf{R}^d)$, this yields \eqref{eq:l1_equicont} in Theorem~\ref{thm:conv_D_zero} almost everywhere. To upgrade to every time $t \in [0, T]$ one has just to recall that $\rho_D \in C([0, T], L^1(\mathbf{R}^d))$. 

Step 2. We show that for any sequence $D_j$ converging to zero as $j \to +\infty$, there exists a triple $(\tilde{n}, \tilde{\rho}, \tilde{p})$ such that over a non-relabeled subsequence, as $j\to +\infty$,
\begin{align}
    \rho_{D_j} &\to \tilde{\rho}\quad &&\text{strongly in}\ L^1((0, T)\times \mathbf{R}^d),
    \\ n_{D_j} &\xrightharpoonup{*} \tilde{n}\quad &&\text{weakly-$*$ in}\ L^\infty((0, T)\times \mathbf{R}^d),
    \\ \nabla n_{D_j} &\xrightharpoonup{*} \nabla \tilde{n}\quad &&\text{weakly-$*$ in}\ \mathcal{M}_{loc}((0, T)\times \mathbf{R}^d),
    \\ p_{D_j} &\xrightharpoonup{} \tilde{p}\quad &&\text{weakly in}\ L^2((0, T)\times \mathbf{R}^d),
    \\ \nabla p_{D_j} &\xrightharpoonup{} \nabla \tilde{p}\quad &&\text{weakly-$*$ in}\ L^2((0, T)\times \mathbf{R}^d).
\end{align}
To do so, pick a sequence $D_j$ such that $D_j \to 0$ as $j$ converges to infinity. Using estimate \eqref{eq:l1_equicont} in Theorem~\ref{thm:conv_D_zero} and items \eqref{item:est_rho} and \eqref{item:bound_grad_nut} in Theorem~\ref{thm:conv_D_zero} we can apply Kolmogorov-Riesz-Fischer's theorem to infer that the sequence $\rho_{D_j}$ is precompact in $L^1((0, T) \times \mathbf{R}^d)$, so that we can assume without loss of generality $\lim_{j \to +\infty} \rho_{D_j} = \tilde{\rho}$ in $L^1((0, T) \times \mathbf{R}^d)$ for some $\tilde{\rho} \in L^1((0, T) \times \mathbf{R}^d)$ -- observe also that because of \eqref{eq:l1_equicont} in Theorem~\ref{thm:conv_D_zero} we get that $\rho_{D_j}(\cdot, t)$ converges to $\tilde{\rho}(\cdot, t)$ in $L^1(\mathbf{R}^d)$ for every $t \in [0, T]$. From the bound on the $L^{\infty}$ norm of $n_{D_j}$ we get that, up to extracting a subsequence, $n_{D_j}$ converges weakly-$*$ in $ L^{\infty}((0, T) \times \mathbf{R}^d)$ to a function $\tilde{n} \in L^{\infty}((0, T) \times \mathbf{R}^d)$. From the bound on the gradient of $n_{D_j}$ (Item \eqref{item:bound_grad_nut} in Theorem~\ref{thm:conv_D_zero}) we also obtain that $\nabla n_{D_j}$ converges weakly-$*$ in the sense of Radon measures to $\nabla n$. From items \eqref{item:est_p} and \eqref{item:est_nabla_p} in Theorem~\ref{thm:conv_D_zero} we infer that, up to a subsequence, there exists $\tilde{p} \in H^1((0, T) \times \mathbf{R}^d)$ such that $p_{D_j}$ converges weakly to $p$ in $L^2((0, T) \times \mathbf{R}^d)$ and $\nabla p_{D_j}$ converges weakly to $\nabla p$ in $L^2((0, T) \times \mathbf{R}^d)$. 

Step 3. We claim that for any limit point $(\tilde{n}, \tilde{\rho}, \tilde{p})$ as in Step 2 we have
\begin{equation}\label{eq:unique_lp}
(\tilde{n}, \tilde{\rho}, \tilde{p}) = (n, \rho, p).
\end{equation}
This says that the limit point is unique, and thus proves items \eqref{item:rho_conv}, \eqref{item:weak_conv_grad}, \eqref{item:conv_nut} in Theorem~\ref{thm:conv_D_zero}.
\\
To verify \eqref{eq:unique_lp} observe that thanks to the uniqueness result of Theorem~\ref{thm:ex_uniq}, we just need to show that $(\tilde{n}, \tilde{\rho}, \tilde{p})$ is a weak solution to \eqref{eq:model_eqnts} in the sense of Definition~\ref{eq:weak_solutions} with $D = 0$ and initial value $(n_0, \rho_0)$. We have to check items \eqref{item:orthog}, \eqref{item:rhoeq} and \eqref{item:nutrienteq} in Definition~\ref{eq:weak_solutions}. For Item \eqref{item:orthog} in Definition~\ref{eq:weak_solutions} we pick a function $\varphi \in C_c((0, T) \times \mathbf{R}^d)$ and, using the fact that $p_{D_j}(1-\rho_{D_j}) = 0$, we obtain
\begin{align*}
\int_{0}^T \int_{\mathbf{R}^d} \tilde{p}(1-\tilde{\rho})\varphi dx dt = \lim_{j \to +\infty} \int_0^T \int_{\mathbf{R}^d} p_{D_j}(1 - \rho_{D_j}) \varphi dx dt = 0,
\end{align*}
and by the arbitrary choice of $\varphi$ we conclude that $\tilde{p}(1-\tilde{\rho}) = 0$ almost everywhere on $Q_T$. For Item \eqref{item:rhoeq} in Definition~\ref{eq:weak_solutions} we pick $\psi \in C^{\infty}_c(Q_T)$ and, using Item \eqref{item:rhoeq} in Definition~\ref{eq:weak_solutions} for $(n_{D_j}, \rho_{D_j}, p_{D_j})$, we obtain
\begin{align*}
\int_0^T \int_{\mathbf{R}^d} (\nabla \psi \cdot \nabla \tilde{p} - \tilde{\rho} \partial_t \psi) dx dt &= \lim_{j \to +\infty} \int_0^T \int_{\mathbf{R}^d} (\nabla \psi \cdot \nabla p_{D_j} - \rho_{D_j} \partial_t \psi) dx dt
\\ &= \lim_{j \to +\infty} \int_{\mathbf{R}^d} \psi(x, 0)\rho_0 dx + \int_0^T \int_{\mathbf{R}^d}\psi n_{D_j} \rho_{D_j} dx dt
\\ &= \int_{\mathbf{R}^d} \psi(x, 0)\rho_0 dx + \int_0^T \int_{\mathbf{R}^d}\psi \tilde{n} \tilde{\rho} dx dt,
\end{align*}
which gives Item \eqref{item:rhoeq} in Definition~\ref{eq:weak_solutions} for $(\tilde{n}, \tilde{\rho}, \tilde{p})$. The last item in Definition~\ref{eq:weak_solutions} is checked analogously. Thus $(\tilde{n}, \tilde{\rho}, \tilde{p})$ is a weak solution of \eqref{eq:model_eqnts} in the sense of Definition \ref{eq:weak_solutions} and the proof is complete.
\end{proof}
\begin{proof}[Proof of Theorem~\ref{thm:estimate_n_second_conv}]
We divide the proof into five steps.

Step 1. Spacial gradient bound on $\rho_D^\tau$. We claim that for every $D \ge 0$ and every $t \in (0, T)$ we have
\begin{align}\label{eq:step1}
\Vert \nabla \rho^{\tau}_D(\cdot, t) \Vert_{L^1(\mathbf{R}^d)} \le &\Vert \nabla \rho_0 \Vert_{L^1(\mathbf{R}^d)} + \Vert n_0 \Vert_{L^{\infty}} \Vert \nabla \rho_D^\tau \Vert_{L^1((0, t), \mathcal{M}(B_{R(t)}))}
\\ &+\sum_{k \le \left[\frac{t}{\tau}\right]}\tau \Vert \nabla n_D^\tau(k\tau) \Vert_{\mathcal{M}(B_{R(t)})},
\end{align}
where $R(t)$ is as in \eqref{eq:claim_supports} in Lemma~\ref{lem:bound_supp}.
\\
To show this, let $k \in \mathbf{N}$ such that $k \le \left[\frac{t}{\tau}\right] - 1$ and observe that by definition of $\rho_D^{k+1, \tau}$, by \cite[Theorem 1.1]{DePhilippis2016} and by \eqref{eq:claim_supports} in Lemma~\ref{lem:bound_supp} we have
\begin{align*}
\Vert \nabla \rho_D^{k+1, \tau} \Vert_{\mathcal{M}(\mathbf{R}^d)} &\le \Vert \nabla (\rho_D^{k, \tau}(1+\tau n_D^\tau(k\tau) )) \Vert_{\mathcal{M}(\mathbf{R}^d)}
\\ &\le (1+\tau \Vert n_0 \Vert_{L^{\infty}(\mathbf{R}^d)}) \Vert \nabla \rho^{k, \tau}_D \Vert_{\mathcal{M}(\mathbf{R}^d)} + \tau \Vert \nabla n_D^\tau(k\tau) \Vert_{	\mathcal{M}(B_{R(t)})}.
\end{align*}
Iterating the last inequality clearly yields \eqref{eq:step1}.

Step 2. We prove that for every $1 \gg D > 0$ the family $\{n_D^{\tau}\}_{\tau > 0}$ is bounded in $H_{loc}^1((0, T) \times \mathbf{R}^d)$. More precisely, we show that for every $R > 0$ and for every $1 \gg D > 0$ there exists a constant $C = C(D, R, T, n_0)$ such that for every $\tau > 0$
\begin{equation}\label{eq:bounds_H}
\max_{s \in [0, T]} \Vert n_D^\tau(s)\Vert_{L^2(B_R)}^2 + \max_{s \in [0, T]} \Vert \nabla n_D^\tau(s)\Vert_{L^2(B_R)}^2 + \Vert \partial_s n_D^\tau\Vert_{L^2((0, T) \times B_R)}^2 \le C.
\end{equation}
\\
This is proved in \cite[Lemma 3.5]{Maury2014} when $\nabla n_0 \in L^2(\mathbf{R}^d)$. We include a proof for the sake of completeness. Fix $R>0$ and a cut-off function $\eta \in C^{\infty}_c(\mathbf{R}^d)$ such that $\mathbf{1}_{B_{2R}} \le \eta \le \mathbf{1}_{B_{4R}}$. We first multiply the equation for $n^\tau_D$ on each interval $[k\tau, (k+1)\tau)$ by $n^\tau_D\eta^2$ and we integrate in space to get that on $(0, T)$
\begin{align*}
\frac{d}{dt}\frac{1}{2}\Vert n^\tau_D\eta \Vert_{L^2(\mathbf{R}^d)}^2 + D\int_{\mathbf{R}^d} |\nabla n_D^\tau |^2 \eta^2 dx + 2D\int_{\mathbf{R}^d} n^\tau_D\eta \nabla n^\tau_D \cdot \nabla \eta dx \le 0.
\end{align*}
Using the Cauchy-Schwarz inequality we obtain
\begin{align*}
\frac{d}{dt}\frac{1}{2}\Vert n^\tau_D\eta \Vert_{L^2(\mathbf{R}^d)}^2 + \frac{D}{2}\int_{\mathbf{R}^d} |\nabla n_D^\tau|^2 \eta^2 dx\le CD\Vert n_0 \Vert_{L^\infty(\mathbf{R}^d)}^2.
\end{align*}
Integrating the inequality in time we obtain
\begin{align*}
\sup_{t \in [0, T]} \Vert n_D^\tau(t)\eta \Vert_{L^2(\mathbf{R}^d)}^2 + \frac{D}{2}\int_0^T\int_{\mathbf{R}^d} |\nabla n_D^\tau|^2 \eta^2 dxdt\le C\Vert n_0 \Vert_{L^\infty(\mathbf{R}^d)}^2 + \Vert n_0 \eta \Vert_{L^2(\mathbf{R}^d)}^2.
\end{align*}
Thus we have that
\begin{align}\label{eq:first_bound}
\sup_{t \in [0, T]} \Vert n_D^\tau(t)\Vert_{L^2(B_{2R})}^2  + \Vert \nabla n_D^\tau \Vert_{L^2((0, T) \times B_R)}^2 \le C.
\end{align}
We  now take another cut-off function $\eta \in C^{\infty}_c(\mathbf{R}^d)$ such that $\mathbf{1}_{B_R} \le \eta \le \mathbf{1}_{B_{2R}}$, we multiply the equation for $n^\tau_D$ by $\partial_t n^\tau_D\eta^2$ and we integrate in space to get, after using Young's inequality,
\begin{align*}
&\frac{1}{4}\int_{\mathbf{R}^d}|\partial_tn^\tau_D|^2\eta^2 dx + D\frac{d}{dt}\frac{1}{2}\int_{\mathbf{R}^d} |\nabla n^\tau_D|^2 \eta^2 dx
\\ &\le C\Vert n_D^\tau \Vert_{L^2(B_{2R})} + C \int_{B_{2R}} |\nabla n_D^\tau|^2 dx.
\end{align*}
By first applying Gronwall's inequality and then integrating the previous inequality in time, we obtain, using also \eqref{eq:first_bound} 
\begin{align*}
&\int_0^T \int_{\mathbf{R}^d} |\partial_t n_D^\tau |^2 \eta^2 dx + \max_{s \in [0, T]}\int_{\mathbf{R}^d}|\nabla n_D^\tau(s)|^2 dx
\\ &\le C\left(\Vert n_0 \Vert_{L^{\infty}(\mathbf{R}^d)}^2 + \Vert n_0 \Vert_{L^2(B_{4R})}^2 + \Vert \nabla n_0 \Vert_{L^2(B_{2R})} \right).
\end{align*}
Recalling that $\mathbf{1}_{B_R} \le \eta$, the previous inequality together with \eqref{eq:first_bound} and the fact that $\nabla n_0 \in L^2_{loc}(\mathbf{R}^d)$ yields \eqref{eq:bounds_H}.

Step 3. We claim that there exists a constant $C = C(D, T, n_0, \rho_0, R)$ such that for every $D \ge 0$ we have
\begin{align}\label{eq:bound_grad_rho}
&\sup_{0 < \tau \ll 1} \sup_{t \in [0, T]}\Vert \nabla \rho^{\tau}_D(\cdot, t) \Vert_{L^1(\mathbf{R}^d)} \le C.
\end{align}
To see this we denote by $B$ the ball $B_{R(T)}$ appearing in Lemma~\ref{lem:bound_supp}. We distinguish two cases. If $D > 0$ we observe that by H\"{o}lder's inequality and by \eqref{eq:bounds_H} we have that there exists a constant $C$ such that
\begin{equation}
\Vert \nabla n_D^\tau(k\tau) \Vert_{\mathcal{M}(B)} \le |B|^{1/2}C.
\end{equation}
Thus \eqref{eq:step1} reads as
\begin{align*}
\Vert \nabla \rho^{\tau}_D(\cdot, t) \Vert_{L^1(\mathbf{R}^d)} \le &\Vert \nabla \rho_0 \Vert_{L^1(\mathbf{R}^d)} + \Vert n_0 \Vert_{L^{\infty}} \Vert \nabla \rho_D^\tau \Vert_{L^1((0, t), \mathcal{M}(B_{R(t)}))} 
\\ &+ |B|^{1/2}C,
\end{align*}
and an application of Gronwall's inequality yields \eqref{eq:bound_grad_rho}. 
\\
If $D = 0$ the explicit expression for the approximate nutrient variable is
\begin{equation}\label{eq:nutrient_exp_D_zero}
n^\tau(x, t) = n_0e^{-\int_0^t \rho^\tau(s-\tau)ds},\quad x \in \mathbf{R}^d, t \in (0, T), 
\end{equation}
and from this we also get an expression for its gradient
\begin{equation}\label{eq:nutrient_exp_D_zero_grad}
\nabla n^{\tau}(x, t) = e^{-\int_0^t \rho^{\tau}(x, s-\tau) ds} \left( \nabla n_0 (x) - \int_0^t \nabla \rho^{\tau}(x, s-\tau) ds\right).
\end{equation}
In particular, we can compute
\begin{align*}
|\nabla n^\tau(x, t)| &\le |\nabla n_0(x)| + \Vert n_0 \Vert_{L^{\infty}(\mathbf{R}^d)}|\nabla \rho^\tau(x, t-\tau)|,
\end{align*}
which, together with \eqref{eq:first_bound}, gives
\begin{align}
\Vert \nabla n^\tau(\cdot, t)\Vert_{L^1(B)} \le &\Vert \nabla n_0(x) \Vert_{L^1(B)} + \Vert n_0 \Vert_{L^{\infty}(\mathbf{R}^d)}\Vert \nabla \rho_0 \Vert_{L^1(\mathbf{R}^d)}
\\ &+ \Vert n_0 \Vert_{L^{\infty}(\mathbf{R}^d)}^2 \Vert \nabla \rho^\tau \Vert_{L^1([0, t)\times \mathbf{R}^d)} \label{eq:pre_gronwall_nut}
\\ &+  \Vert n_0 \Vert_{L^{\infty}(\mathbf{R}^d)} \Vert \nabla n^\tau \Vert_{L^1([0, t) \times B)}.
\\ &+\Vert n_0 \Vert_{L^{\infty}(\mathbf{R}^d)} \left( \sum_{k \le \left[ \frac{t}{\tau} \right]} \tau \Vert \nabla n^{\tau}(k\tau) \Vert_{\mathcal{M}(B)} -  \Vert \nabla n^\tau \Vert_{L^1([0, t) \times B)}\right)
\nonumber
\end{align}
Observe that the last term in \eqref{eq:pre_gronwall_nut} is $o(1)$ as $\tau \to 0$, this follows from the continuity of the map $t \mapsto \Vert \nabla n^{\tau}(x, t) \Vert_{\mathcal{M}(B)}$, which follows from \eqref{eq:nutrient_exp_D_zero_grad}. Summing \eqref{eq:step1} and \eqref{eq:pre_gronwall_nut} and using Gronwall's inequality we obtain the claim for $D = 0$.

Step 4. Precompactness of $\{n_D^\tau\}_{\tau > 0}$ in $L_{loc}^2((0, T) \times \mathbf{R}^d)$.
\\
For $D>0$, the precompactness of $\{n_D^\tau\}_{\tau > 0}$ in $L_{loc}^2((0, T) \times \mathbf{R}^d)$ is a consequence of Step 3 and the compact Sobolev embedding. For $D = 0$ we use the explicit expression for the nutrient variable \eqref{eq:nutrient_exp_D_zero} and the bound \eqref{eq:bound_grad_rho} to infer that 
\begin{equation}
\sup_{0 < \tau \ll 1} \Vert \nabla n^\tau \Vert_{L^1((0, T) \times B)} < \infty. 
\end{equation}
It is also clear from the equation
\begin{equation}
\partial_t n^\tau = -n^\tau \rho^{k-1},\quad t \in (k\tau, (k+1)\tau), k \in \mathbf{N},
\end{equation}
that
\begin{equation}
\sup_{0 < \tau \ll 1} \Vert \partial_t n^\tau \Vert_{L^1((0, T) \times B)} < \infty. 
\end{equation}
In particular, $\{n^\tau\}$ is bounded in $BV_{loc}((0, T) \times \mathbf{R}^d)$, and thus precompact in the space $L^1_{loc}((0, T) \times \mathbf{R}^d)$ by the compact Sobolev embedding. The precompactness in $L^2_{loc}((0, T) \times \mathbf{R}^d)$ follows from the precompactness in $L_{loc}^1((0, T) \times \mathbf{R}^d)$ and the bound $\Vert n^\tau \Vert_{L^{\infty}(\mathbf{R}^d)} \le \Vert n_0 \Vert_{L^{\infty}(\mathbf{R}^d)}$. We also observe that in both cases $D > 0$ and $D = 0$ the gradients $\nabla n^{\tau}_D$ are weakly-$*$ precompact in $L^{\infty}((0, T), \mathcal{M}_{loc}(\mathbf{R}^d))$.

Step 5. Conclusion.
\\
Following the arguments for \cite[Lemma 3.4]{Jacobs2022}
and \cite[Lemma 3.5]{Jacobs2022} we obtain, for each $D \ge 0$, the precompactness of $\{\rho_D^\tau\}_{\tau > 0}$ in $L^{1}((0, T) \times \mathbf{R}^d)$ and the weak precompactness of $\{p_D^{\tau}\}_{\tau > 0}$ and $\{\nabla p_D^{\tau}\}_{\tau > 0}$ in $L^{2}((0, T) \times \mathbf{R}^d)$. The only difference is that the needed bounds may depend on $D$. We now fix $D \ge 0$, we take a sequence $\tau_j \to 0$ and, by what we just proved, we can assume that there exist $\tilde{\rho} \in L^{1}((0, T) \times \mathbf{R}^d)$, $\tilde{p} \in H^1((0, T) \times \mathbf{R}^d)$ and $\tilde{n} \in L^{\infty}((0, T) \times \mathbf{R}^d) \cap L^2((0, T), L^2_{loc}(\mathbf{R}^d))$ with $\nabla \tilde{n} \in L^{\infty}((0, T), \mathcal{M}_{loc}(\mathbf{R}^d))$ such that, as $j \to +\infty$
\begin{align*}
\rho_{D}^{\tau_j} &\to \tilde{\rho}&&\text{strongly in}\ L^1((0, T) \times \mathbf{R}^d)
\\ p_D^{\tau_j} &\xrightharpoonup{} \tilde{p}&&\text{weakly in}\ L^2((0, T) \times \mathbf{R}^d)
\\ \nabla p_D^{\tau_j}&\xrightharpoonup{} \nabla \tilde{p}&&\text{weakly in}\ L^2((0, T) \times \mathbf{R}^d)
\\ n_D^{\tau_j} &\to \tilde{n}&&\text{strongly in}\ L^2((0, T), L^2_{loc}(\mathbf{R}^d))
\\ \nabla n_D^{\tau_j} &\to \nabla \tilde{n}&&\text{in the sense of distributions in}\ [0, T) \times \mathbf{R}^d.
\end{align*}
Arguing as in \cite[Proposition 3.6]{Jacobs2022} one can show that the triple $(\tilde{n}, \tilde{\rho}, \tilde{p})$ is a weak solution to \eqref{eq:model_eqnts} in the sense of Definition~\ref{eq:weak_solutions}. In particular, by the uniqueness part of Theorem~\ref{thm:ex_uniq} we infer that $(\tilde{n}, \tilde{\rho}, \tilde{p}) = (n_D, \rho_D, p_D)$.
\end{proof}
\subsection{Strong convergence of $\nabla p_D$}
Before entering the proof of Proposition~\ref{prop:strong_conv_pressure} we need two results: Lemma~\ref{lem:convergence_nut} improves the convergence of the nutrients $n_D$ to strong convergence in $L^2((0,T), L^2_{loc}(\mathbf{R}^d))$, while Lemma~\ref{lem:var_int_approx} gives a variational interpretation for the approximate pressure variable obtained by using Scheme \ref{item:second_scheme}.
\begin{lemma}\label{lem:convergence_nut}
Let $n_0 \in L^{\infty}(\mathbf{R}^d)$ such that $\nabla n_0 \in L^2_{loc}(\mathbf{R}^d)$, and let $\rho_0 \in BV(\mathbf{R}^d)$. For any $D > 0$ denote by $(n_D, \rho_D, p_D)$ the unique weak solution to \eqref{eq:model_eqnts} in the sense of Definition~\ref{eq:weak_solutions} with initial values $(n_0, \rho_0)$. Denote by $(n, \rho, p)$ the unique weak solution to \eqref{eq:model_eqnts} with $D=0$ and same initial values. Then for every $T > 0$ and every $R>0$ we have
\begin{equation}
\lim_{D \to 0} \Vert n_D - n \Vert_{L^2((0, T) \times B_R)} = 0.
\end{equation}
\end{lemma}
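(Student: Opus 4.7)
The plan is to combine the weak-$*$ convergence $n_D\xrightharpoonup{*} n$ in $L^{\infty}$ from Theorem~\ref{thm:conv_D_zero}\eqref{item:conv_nut} with a $D$-uniform compactness estimate obtained via the Aubin--Lions lemma. Since the weak-$*$ limit is unique, any strong limit point in $L^2_{loc}$ must coincide with $n$, and a standard Urysohn subsequence argument then upgrades subsequential convergence to convergence of the entire family.

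For the compactness I would collect two $D$-uniform bounds. The $L^{\infty}$ maximum principle for the nutrient equation gives $\Vert n_D\Vert_{L^\infty([0,T]\times\mathbf{R}^d)}\le\Vert n_0\Vert_{L^\infty}$, and since $\nabla n_0\in L^2_{loc}(\mathbf{R}^d)\subset\mathcal{M}_{loc}(\mathbf{R}^d)$, item~\eqref{item:bound_grad_nut} of Theorem~\ref{thm:conv_D_zero} yields a bound on $n_D$ in $L^\infty([0,T],BV(B_R))$ that is uniform in $D$. For the time derivative, I would test the equation $\partial_t n_D=D\Delta n_D-\rho_D n_D$ against $\psi\in W^{1,\infty}_0(B_R)$ and integrate by parts in the Laplacian: the diffusive contribution is controlled by $D\,\Vert\nabla n_D(t)\Vert_{\mathcal{M}(B_R)}\Vert\nabla\psi\Vert_{L^\infty}$, which stays uniform in $D$ thanks to the $BV$ bound above, while the reaction term is controlled in $L^\infty$. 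Together this gives a $D$-uniform bound on $\partial_t n_D$ in $L^\infty([0,T],(W^{1,\infty}_0(B_R))^*)$.

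Since $BV(B_R)$ embeds compactly into $L^1(B_R)$ (by the Rellich theorem for $BV$), and $L^1(B_R)$ embeds continuously into $(W^{1,\infty}_0(B_R))^*$ via duality with $W^{1,\infty}_0\hookrightarrow L^\infty$, the Aubin--Lions compactness lemma implies precompactness of $\{n_D\}_{0<D\le 1}$ in $L^1([0,T]\times B_R)$. Interpolating against the uniform $L^\infty$ bound upgrades this to precompactness in $L^2([0,T]\times B_R)$, and identifying the unique strong limit as $n$ via Theorem~\ref{thm:conv_D_zero}\eqref{item:conv_nut} completes the proof. The only subtle point is the $D$-uniformity of the time-derivative bound, which works precisely because the diffusion term enters only through $D\,\Vert\nabla n_D\Vert_{\mathcal{M}_{loc}}$, a quantity that is already uniformly controlled by Theorem~\ref{thm:conv_D_zero}\eqref{item:bound_grad_nut}; no $D$-dependent $H^1$ estimate of the kind used in Theorem~\ref{thm:estimate_n_second_conv} is required.
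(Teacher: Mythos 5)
Your argument is correct in substance, but it takes a genuinely different route from the paper. The paper proves the lemma quantitatively: it subtracts the two nutrient equations, multiplies by $(n_D-n)\eta^2$, and runs a Gronwall argument, having first established two auxiliary facts -- a uniform bound on $\sqrt{D}\,\nabla n_D$ in $L^2_{loc}$ and the weak convergence $D\Delta n_D \rightharpoonup 0$ in $L^2_{loc}$ (the latter through an $L^2$ bound on $\partial_t n_D$ that uses $\nabla n_0\in L^2_{loc}$) -- and then feeds in the strong $L^1$ convergence $\rho_D\to\rho$ from Theorem~\ref{thm:conv_D_zero}. This yields the stronger conclusion $\max_{t\in[0,T]}\Vert n_D(t)-n(t)\Vert_{L^2(B_R)}\to 0$, with an explicit rate in terms of $\Vert\rho_D-\rho\Vert_{L^1}$ and $\sqrt{D}$. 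Your proof is a soft compactness argument: uniform $L^\infty$ and $L^\infty_t BV_x$ bounds plus a uniform bound on $\partial_t n_D$ in $L^\infty([0,T],(W^{1,\infty}_0(B_R))^*)$ (where the diffusion term only enters as $D\Vert\nabla n_D(t)\Vert_{\mathcal{M}}$), Aubin--Lions--Simon, interpolation with $L^\infty$, and identification of every $L^2_{loc}$ limit point with $n$ via the weak-$*$ convergence of Theorem~\ref{thm:conv_D_zero}\eqref{item:conv_nut} and a subsequence argument. What your route buys is economy of hypotheses -- it never uses $\nabla n_0\in L^2_{loc}$, only $\mathcal{M}_{loc}$, and avoids the $D\Delta n_D\rightharpoonup 0$ step entirely; what it gives up is any rate and the uniform-in-time $L^2$ control, which the paper's Gronwall argument provides for free. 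One caveat you share with the paper's own Step~1: item~\eqref{item:bound_grad_nut} of Theorem~\ref{thm:conv_D_zero} states the bound on $\Vert\nabla n_D(t)\Vert_{\mathcal{M}}$ only on the fixed ball $B_{R(T)}$, while both you and the paper invoke it on an arbitrary ball $B_R$; strictly speaking one should note that the local gradient estimate extends to any fixed ball (with an $R$-dependent constant), which is routine but worth a sentence.
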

\begin{remark}\label{rem:gronwall}
Before entering the proof of Lemma~\ref{lem:convergence_nut} let us recall the following version of Gronwall's inequality: if $\alpha \in W^{1,1}((0, T))$ is a non-negative function satisfying for a.e.\ $t \in (0, T)$
\begin{equation}
\dot{\alpha}(t) \le c\alpha(t) + \beta(t),
\end{equation}
for a constant $c > 0$ and an integrable function $\beta \in L^1((0,T))$, then for every $t \in (0, T)$
\begin{equation}\label{eq:grw}
\alpha(t) \le e^{ct}\left( \alpha(0) + \int_0^t e^{-cs} \beta(s) ds\right).
\end{equation}
Note that $\beta$ can also assume negative values. 
\end{remark}
\begin{proof}[Proof of Lemma~\ref{lem:convergence_nut}]
We fix $T > 0$ and $R > 0$ as in the statements. We need to prove that
\begin{align*}
\lim_{D \to 0} n_D = n,\quad \text{in}\ L^2((0, T) \times B_R).
\end{align*}
To this aim, we pick a sequence of diffusion parameters $\{D_j\}_{j \in \mathbf{N}}$ such that $D_j \to 0$ as $j\to +\infty$ and we show that, up to the extraction of a subsequence,
\begin{align}\label{eq:claim_on_subs}
\lim_{j \to +\infty} n_{D_{j}} = n,\quad \text{in}\ L^2((0, T) \times B_R).
\end{align}
Step 1. We claim that
\begin{equation}\label{eq:this_strong_conv}
\sup_{j \in \mathbf{N}} \Vert \sqrt{D_{j}} \nabla n_{D_{j}} \Vert_{L^2((0, T) \times B_{4R})} < +\infty.
\end{equation}
To show this we pick a cut-off function $\eta \in C^{\infty}_c(\mathbf{R}^d)$ with $\mathbf{1}_{B_{4R}} \le \eta \le \mathbf{1}_{B_{8R}}$,we multiply the $n_{D_{j}}$ equation by $n_{D_{j}}\eta^2$ and we integrate in space to get
\begin{align*}
\frac{d}{dt}\frac{1}{2}\Vert n_{D_{j}}\eta \Vert^2_{L^2(B_{4R})} +\int_{B_{4R}} |\sqrt{D_{j}}\nabla n_{D_{j}}|^2 \eta^2 dx \le C\Vert n_0 \Vert_{L^{\infty}(\mathbf{R}^d)} \int_{B_{8R}}|\nabla n_{D_j}|dx.
\end{align*}
Thanks to Item \eqref{item:bound_grad_nut} in Theorem~\ref{thm:conv_D_zero} the right-hand side of the previous inequality is bounded by a constant $C$ non depending on $j$, thus integrating in time we obtain
\begin{equation}
\int_0^T \int_{B_{4R}} |\sqrt{D_j}\nabla n_{D_j}|^2 dx dt \le CT + \Vert n_0 \Vert_{L^2(B_{8R})}^2,
\end{equation}

Step 2. We claim that
\begin{equation}
D_j \Delta n_{D_j} \xrightharpoonup{} 0\quad \text{weakly in}\ L^2(B_{2R}).
\end{equation}
In view of Step 1, it is easy to see that $D_j \Delta n_{D_j}$ converges to zero in the sense of distributions. To conclude, it is thus sufficient to show that the sequence $\{D_j \Delta n_{D_j}\}_{j \in \mathbf{N}}$ is bounded in $L^2(B_{2R})$. To show this, we let $\eta \in C^{\infty}_c(\mathbf{R}^d)$ such that $\mathbf{1}_{B_{2R}} \le \eta \le \mathbf{1}_{B_{4R}}$. We multiply the equation for $n_{D_j}$ by $\partial_t n_{D_j} \eta^2$ and we integrate in space to get for a.e.\ $t \in (0, T)$
\begin{align*}
&\begin{aligned}
\Vert \partial_t n_{D_j} \eta \Vert_{L^2(B_{4R})}^2 + \int_{B_{4R}} D_j \nabla n_{D_j} \partial_t \nabla n_{D_j} \eta^2 dx = &-\int_{B_{4R}} \rho_{D_j} n_{D_j} \partial_t n_{D_j} \eta^2 dx
\\ & - 2\int_{B_{4R}} D_{j} \nabla n_{D_j} \partial_t n_{D_j} \nabla \eta \eta dx
\end{aligned}
\\ &\begin{aligned}
\phantom{\Vert \partial_t n_{D_j} \eta \Vert_{L^2(B_{2R})}^2 + \int_{B_{4R}} D_j \nabla n_{D_j} \partial_t \nabla n_{D_j} \eta^2 dx} \le &\frac{1}{2} \Vert \partial_t n_{D_j} \eta \Vert_{L^2(B_{4R})}^2 + C \Vert \rho_{D_j} n_{D_j} \Vert_{L^2(B_{4R})}^2 
\\ & + C \Vert D_j \nabla n_{D_j} \Vert_{L^2(B_{4R})}^2,
\end{aligned}
\end{align*}
where in the second line we used Young's inequality twice. We can thus rearrange terms so that
\begin{align}
&\begin{aligned}
\frac{1}{2}\Vert \partial_t n_{D_j} \Vert_{L^2(B_{4R})}^2 + \frac{d}{dt} \Vert \sqrt{D_j} \nabla n_{D_j} \eta \Vert_{L^2(B_{4R})}^2 \le &C \Vert \rho_{D_j} n_{D_j} \Vert_{L^2(B_{4R})}^2 
\\ &+ C \Vert D_j \nabla n_{D_j} \Vert_{L^2(B_{4R})}^2.
\end{aligned}
\end{align}
Integrating the previous inequality in time we obtain
\begin{align}
&\begin{aligned}
\frac{1}{2}\int_0^T \Vert \partial_t n_{D_j} \eta \Vert_{L^2(B_{4R})}^2 dt + \Vert \sqrt{D_j} \nabla n_{D_j}(T)\eta \Vert_{L^2(B_{4R})}^2 \le & \frac{1}{2}  \Vert \sqrt{D_j} \nabla n_0 \Vert_{L^2(B_{4R})}^2
\\ &+ CTR^d \Vert n_0 \Vert_{L^\infty(\mathbf{R}^d)}^2  
\\ & + C \int_0^T \Vert D_{j} \nabla n_{D_j} \Vert_{L^2(B_{4R})}^2dt.
\end{aligned}
\end{align}
In view of Step 1 and recalling that $\mathbf{1}_{B_{2R}} \le \eta$ we thus get
\begin{equation}
\sup_{j \in \mathbf{N}} \int_0^T \Vert \partial_t n_{D_j} \Vert_{L^2(B_{2R})}^2 dt < +\infty,
\end{equation}
which yields the claim by exploiting that $D_{j}\Delta n_{D_j} = \partial_t n_{D_j} + \rho_{D_j} n_{D_j}$.

Step 3. We prove that
\begin{equation}\label{eq:claim_larger_rad}
\lim_{j \to +\infty} \max_{t \in [0, T]} \Vert n_{D_{j}}(\cdot, t) - n(\cdot, t) \Vert_{L^2(B_{R})} = 0,
\end{equation}
which clearly implies \eqref{eq:claim_on_subs}.
\\
For this we subtract the $n$ equation from the $n_{D_j}$ equation, and we multiply the resulting equation by $(n_{D_j} - n)\eta^2$, where $\eta \in C^{\infty}_c(\mathbf{R}^d)$ is a cut-off function such that $\mathbf{1}_{B_R} \le \eta \le \mathbf{1}_{B_{2R}}$.  We then integrate in space to get
\begin{align}
&\begin{aligned}
\frac{1}{2}\frac{d}{dt} \Vert (n_{D_j} - n) \eta \Vert_{L^2(B_{2R})}^2 = &-D_j\int_{B_{2R}} |\nabla n_{D_j}|^2 \eta^2 dx - \int_{B_{2R}} D_{j} \Delta n_{D_j} n \eta^2 dx
\\ & -\int_{B_{2R}} \rho_{D_j}(n_{D_j} - n)^2 \eta^2 dx + \int_{B_{2R}} n(\rho_{D_j} - \rho)(n_{D_j}-n)\eta^2 dx
\\ & +2\int_{B_{2R}} D_j \nabla n_{D_j} \cdot \nabla \eta  \eta n_{D_j}dx
\end{aligned}
\\ &\begin{aligned}
\phantom{\frac{1}{2}\frac{d}{dt} \Vert (n_{D_j} - n) \eta \Vert_{L^2(B_{2R})}^2} \le &-\int_{B_{2R}}D_{j} \Delta n_{D_j} n\eta^2 dx + \Vert (n_{D_j} - n)\eta \Vert_{L^2(B_{2R})}^2 
\\ &+ C \int_{B_{2R}} |\rho_{D_j} - \rho| dx + C\sqrt{D_j} (\Vert \sqrt{D_j} \nabla n_{D_j} \Vert_{L^2(B_{2R})}^2 + 1).
\end{aligned}
\end{align}
We now apply Gronwall's inequality in the form of Remark~\ref{rem:gronwall}, with the choices $\alpha = \frac{1}{2}\frac{d}{dt} \Vert (n_{D_j} - n) \eta \Vert_{L^2(B_{2R})}^2$, $c = 2$ and $\beta = -\int_{B_{2R}}D_{j} \Delta n_{D_j} n\eta^2 dx + C \int_{B_{2R}} |\rho_{D_j} - \rho| dx + C\sqrt{D_j} (\Vert \sqrt{D_j} \nabla n_{D_j} \Vert_{L^2(B_{2R})}^2 + 1)$ to get
\begin{align}
\max_{t \in [0, T]} \Vert (n_{D_j} - n) \Vert_{L^2(B_R)}^2 \le &-e^{2T}\int_0^T \int_{B_{2R}} D_{j} \Delta n_{D_j} (e^{-2t} \eta^2 n) dx dt 
\\ &+ Ce^{2T} \int_0^T\int_{B_{2R}} |\rho_{D_j} - \rho| dxdt
\\ &+ Ce^{2T}\sqrt{D_j} (\Vert \sqrt{D_j} \nabla n_{D_j} \Vert_{L^2((0, T) \times B_{2R})}^2 + T)
\end{align}
We conclude by observing that by what we proved in Step 1 and Step 2, and by Item \eqref{item:rho_conv} in Theorem~\ref{thm:conv_D_zero} the right-hand side of the previous inequality converges to zero as $j \to +\infty$.
\end{proof}
\begin{lemma}\label{lem:var_int_approx}
Let $n_0 \in L^\infty(\mathbf{R}^d)$ with $\nabla n_0 \in L^2_{loc}(\mathbf{R}^d)$ and let $\rho_0 \in BV(\mathbf{R}^d)$ be compactly supported with $0 \le \rho_0 \le 1$. For $D \ge 0$, let $\{(n^\tau_D, \rho^\tau_D, p^\tau_D)\}_{\tau > 0}$ be the family of approximate solutions obtained using Scheme \ref{item:second_scheme} with initial values $(n_0, \rho_0)$. Then we have for all $\tau > 0$, for all $D \ge 0$ and all $t > 0$
\begin{equation}\label{eq:variational_ineq_approx}
\int_{\mathbf{R}^d} \nabla \xi \cdot \nabla p^{\tau}_D(x,t)dx \le \int_{\mathbf{R}^d} \xi n_D^\tau(x, t-\tau)\rho_D^\tau(x, t-\tau)\mu_D^{\tau}(x, t-\tau)dx\quad \forall \xi \in H_{\rho_D^\tau(t)},
\end{equation}
where $H_{\rho_D^\tau(t)}$ is defined as in \eqref{eq:a_def_for_set} in Section \ref{sec:mainres}.
\end{lemma}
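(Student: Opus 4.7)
The plan is to prove \eqref{eq:variational_ineq_approx} by reducing it to a statement at a single step of Scheme \ref{item:second_scheme} and then exploiting the Kantorovich duality associated with the Wasserstein projection.

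First, I would fix $t \in [k\tau, (k+1)\tau)$ for some $k \ge 0$. By the piecewise constant interpolations defining Scheme \ref{item:second_scheme}, one has $p_D^\tau(t) = p_D^{k+1,\tau}$, $\rho_D^\tau(t-\tau) = \rho_D^{k,\tau}$, and $\mu_D^\tau(t-\tau) = \mu_D^{k,\tau}$ (adopting the natural convention $\rho_D^{-1,\tau} = \rho_0$ and $n_D^\tau(\cdot,s) = n_0$ for $s < 0$, when $k = 0$). The statement thus reduces to the discrete claim: for all $\xi \in H_{\rho_D^{k+1,\tau}}$,
\begin{equation*}
\int_{\mathbf{R}^d} \nabla \xi \cdot \nabla p_D^{k+1,\tau}\, dx \le \int_{\mathbf{R}^d} \xi \, n_D^\tau(\cdot, t-\tau)\, \rho_D^{k,\tau}\, \mu_D^{k,\tau}\, dx.
\end{equation*}

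The main tool is Brenier's theorem applied to the Wasserstein projection defining $\rho_D^{k+1,\tau}$: there is a unique optimal transport map $T$ with $T\#\mu_D^{k,\tau} = \rho_D^{k+1,\tau}$, and its inverse $S := T^{-1} = \mathrm{Id} + \tau \nabla p_D^{k+1,\tau}$ is the gradient of the convex Brenier potential $\varphi(y) := |y|^2/2 + \tau p_D^{k+1,\tau}(y)$. The dual formulation defining $p_D^{k+1,\tau}$ is precisely the corresponding Kantorovich dual, and the associated KKT conditions give complementary slackness $p_D^{k+1,\tau}(1-\rho_D^{k+1,\tau}) = 0$ almost everywhere. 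Using $T\# \mu_D^{k,\tau} = \rho_D^{k+1,\tau}$, I rewrite
\begin{equation*}
\int_{\mathbf{R}^d} \xi\, \rho_D^{k+1,\tau}\, dy - \int_{\mathbf{R}^d} \xi\, \mu_D^{k,\tau}\, dx = \int_{\mathbf{R}^d} [\xi(T(x)) - \xi(x)]\, \mu_D^{k,\tau}(x)\, dx,
\end{equation*}
and combine a Taylor expansion of $\xi$ along the transport segments, the identity $T(x) - x = -\tau \nabla p_D^{k+1,\tau}(T(x))$, and the convexity of $\varphi$ to extract the one-sided bound
\begin{equation*}
\int_{\mathbf{R}^d} [\xi(T(x)) - \xi(x)]\, \mu_D^{k,\tau}(x)\, dx \ge -\tau \int_{\mathbf{R}^d} \nabla \xi \cdot \nabla p_D^{k+1,\tau}\, \mu_D^{k,\tau}\, dx.
\end{equation*}

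Finally, I would simplify: since $\xi \in H_{\rho_D^{k+1,\tau}}$, $\nabla \xi = 0$ almost everywhere on $\{\rho_D^{k+1,\tau} < 1\}$, so $\int \xi\, \rho_D^{k+1,\tau} = \int \xi$; then $\mu_D^{k,\tau} = \rho_D^{k,\tau}(1+\tau n_D^\tau(k\tau))$ together with $\rho_D^{k,\tau} \le 1$ rewrites the right-hand side in the stated form involving the product $n_D^\tau\, \rho_D^\tau\, \mu_D^\tau$. The main obstacle is the rigorous justification of the one-sided Taylor-type inequality: this requires approximating the merely $H^1$ function $\xi$ by smooth nonnegative functions compatible with the constraint $\xi(1-\rho_D^{k+1,\tau}) = 0$, and careful exploitation of the Monge--Amp\`ere equation for the convex potential $\varphi$, whose convexity is what secures the correct direction of the inequality. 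The tighter factors $\rho_D^{k,\tau}$ and $\mu_D^{k,\tau}$ appearing on the right-hand side, beyond what a naive Taylor argument would yield, are tracked using the monotonicity $\rho_D^{k,\tau} \le 1$ and the complementary slackness.
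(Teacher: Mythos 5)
Your overall setup (reduce to a single step of Scheme II, use that $\mathrm{Id}+\tau\nabla p_D^{k+1,\tau}$ is the optimal map between $\rho_D^{k+1,\tau}$ and $\mu_D^{k,\tau}$, and exploit $\xi\ge 0$, $\xi(1-\rho_D^{k+1,\tau})=0$, $\rho_D^{k,\tau}\le 1$) is the same as the paper's, but the central analytic step does not work as written. First, the direction is off: combining your identity $\int\xi\rho_D^{k+1,\tau}-\int\xi\mu_D^{k,\tau}=\int[\xi(T(x))-\xi(x)]\mu_D^{k,\tau}\,dx$ with your claimed bound $\int[\xi(T)-\xi]\mu_D^{k,\tau}\ge-\tau\int\nabla\xi\cdot\nabla p_D^{k+1,\tau}\mu_D^{k,\tau}$ only yields a \emph{lower} bound, namely $\tau\int\nabla\xi\cdot\nabla p_D^{k+1,\tau}\mu_D^{k,\tau}\ge\int\xi(\mu_D^{k,\tau}-1)$, whereas the lemma asserts an \emph{upper} bound on $\int\nabla\xi\cdot\nabla p_D^{k+1,\tau}$; to conclude you would need the reverse inequality, i.e.\ an upper bound of the transport increment by its linearization. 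Second, neither your stated inequality nor its reverse is justified: the sign of the remainder $\xi(T(x))-\xi(x)-\nabla\xi(x)\cdot(T(x)-x)$ is governed by second-order information on $\xi$, and $\xi$ is an arbitrary nonnegative $H^1$ function vanishing off $\{\rho_D^{k+1,\tau}=1\}$ --- it is neither convex nor semiconvex, and convexity of the Brenier potential $\varphi$ says nothing about this remainder. Smoothing $\xi$ does not repair this, since the error is of size $\Vert D^2\xi\Vert_{L^\infty}|T(x)-x|^2$ with no sign, and the approximation must in addition respect the constraint $\xi(1-\rho_D^{k+1,\tau})=0$.

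The paper obtains the sign by a different mechanism that never expands $\xi$ to second order. With $T^k=\mathrm{Id}+\tau\nabla p^{k+1,\tau}_D$, the optimal map from $\rho^{k+1,\tau}_D$ to $\mu^{k,\tau}_D$ (the opposite direction from yours), it forms the displacement interpolation $T_t=tT^k+(1-t)\mathrm{Id}$ and bounds the density of $(T_t)_\#\rho^{k+1,\tau}_D$ by $1+t\tau(\rho^{k,\tau}_D n^{k,\tau}_D)\circ T^k\circ T_t^{-1}$, using convexity of $M\mapsto|\det M|^{-1}$ on positive matrices together with the Monge--Amp\`ere relation $\rho_D^{k+1,\tau}/|\det\nabla T^k|=\mu_D^{k,\tau}\circ T^k$ and the bounds $\rho^{k,\tau}_D\le1$, $\rho^{k+1,\tau}_D\le1$. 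Combined with $\xi\ge0$ and $\int\xi=\int\xi\rho^{k+1,\tau}_D$, this gives for every $t\in(0,1]$ an upper bound on the difference quotient $\frac1t\int(\xi\circ T_t-\xi)\rho^{k+1,\tau}_D$, and letting $t\to0^+$ requires only first-order differentiability of $\xi$ (available for $H^1$), the left-hand side becoming exactly $\tau\int\nabla\xi\cdot\nabla p^{k+1,\tau}_D\rho^{k+1,\tau}_D$; this also produces the factor $\mu^{k,\tau}_D$ on the right-hand side of \eqref{eq:variational_ineq_approx}, which your final simplification via $\rho_D^{k,\tau}\le1$ alone would not recover. In short, the convexity that drives the inequality is that of the determinant functional along the interpolation, not of $\varphi$ or $\xi$; as written, your key inequality is both unproved and, even if granted, pointing the wrong way.
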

\begin{proof}
For ease of notation, for every $k \in \mathbf{N}$ we define
\begin{equation}
n^{k, \tau}_D := n^{\tau}_D(k\tau).
\end{equation}
We let $k \in \mathbf{N}$ such that $t \in [k\tau, (k+1)\tau)$, then by definition we have $\nabla p_D^{\tau}(t) = \nabla p_D^{k, \tau}$. We also recall that $T^k:\mathbf{R}^d \to \mathbf{R}^d$ defined by
\begin{equation}
T^k := (Id + \tau \nabla p_D^{k+1, \tau})
\end{equation}
is the optimal transport map from $\rho_D^{ k+1, \tau}$ to $\mu_D^{k, \tau}$. We define the interpolation maps $T_t := tT^k+(1-t)Id$ and we define $\gamma_t:= (T_t)_{\#}\rho_D^{k+1, \tau}$. Then $\gamma_t$ is a measure on $\mathbf{R}^d$, absolutely continuous with respect to the Lebesgue measure, with density
\begin{align}
\gamma_t = \frac{\rho_D^{ k+1, \tau} \circ T_t^{-1}}{|\operatorname{det}\nabla T_t|}&\le \left(t \frac{\rho_D^{k+1, \tau}}{|\operatorname{det}\nabla T^k|} + (1-t) \frac{\rho_D^{k+1, \tau}}{|\operatorname{det}Id|}\right) \circ T_t^{-1},\label{eq:after_convex}
\end{align}
where in the inequality we used the fact that $|\operatorname{det}M|^{-1}$ is convex on the space of positive-definite matrices. Observe that
\begin{equation}
\frac{\rho_D^{k+1, \tau}}{|\operatorname{det}\nabla T^k|}(x) = \mu_D^{k, \tau}(T^k(x)) \le \bigg(1+\tau \rho_D^{k, \tau}(T^k(x))n_D^{k, \tau}(T^k(x))\bigg).
\end{equation}
In particular, inserting back into \eqref{eq:after_convex} this yields
\begin{equation}
\gamma_t(x) \le 1 +  t\tau \rho_D^{k, \tau}(T^k\circ T_t^{-1}(x))n_D^{k, \tau}(T^k \circ T_t^{-1}(x)).
\end{equation}
We now take $\xi \in H_{\rho_D^\tau(t)}$ and evaluate
\begin{align*}
&\int_{\mathbf{R}^d} (\xi(T_t(x)) - \xi(x))\rho_{D}^{k+1, \tau}(x) dx 
\\ &\le  \int_{\mathbf{R}^d} \xi(x) (1 +  t\tau \rho_D^{k, \tau}(T^k\circ T_t^{-1}(x))n_D^{k, \tau}(T^k\circ T_t^{-1}(x))) dx- \int_{\mathbf{R}^d}\xi(x)\rho_{D}^{k+1, \tau}(x) dx
\\ &\le t\tau\int_{\mathbf{R}^d} \xi(x) \rho_D^{k, \tau}(T^k \circ T_t^{-1}(x))n_D^{k, \tau}(T^k \circ T_t^{-1}(x))) \rho_{D}^{k+1, \tau} dx,
\end{align*}
where in the last line we used that $\xi (1-\rho_{D}^{k+1, \tau}) = 0$. We now divide the previous inequality by $t$ and let $t \to 0$ to obtain
\begin{align*}
\int_{\mathbf{R}^d} \nabla \xi \cdot \nabla p_D^{k+1, \tau} \rho_D^{k+1, \tau} dx \le \int_{\mathbf{R}^d} \xi \rho_D^{k, \tau} n_{D}^{k, \tau} \mu_D^{k, \tau} dx.
\end{align*}
Using again that $\xi(1-\rho_{D}^{k+1, \tau}) = 0$ yields the claim.
\end{proof}
\begin{proof}[Proof of Proposition~\ref{prop:strong_conv_pressure}]
We first claim that for every $\xi \in H^1(\mathbf{R}^d)$ and every $\delta > 0$ we have
\begin{align}
&\int_t^{t+\delta} \int_{\mathbf{R}^d}\nabla p_D \cdot (\nabla p_D - \nabla \xi) dx ds\label{eq:claim_without_restrict}
\\ &\le \int_{\mathbf{R}^d} (\rho_D(t+\delta) - \rho_D(t))\xi dx+\int_t^{t+\delta}\int_{\mathbf{R}^d} n_D \rho_D(p_D \rho_D - \xi) dx ds.\nonumber
\end{align}
To prove \eqref{eq:claim_without_restrict} we can assume that $\xi \in C^{\infty}_c(\mathbf{R}^d)$, because $C^{\infty}_c(\mathbf{R}^d)$ is dense in $H^1(\mathbf{R}^d)$. We let $(n_D^\tau,\rho_D^\tau, p_D^\tau)$ be the approximations obtained by using the Scheme \ref{item:second_scheme}. For ease of notation for every $k \in \mathbf{N}$ we define
\begin{equation}
n^{\tau, k}_D := n^{\tau}_D(k\tau).
\end{equation}
We fix $k \in \mathbf{N}$ and we recall that $T^k(x) := x + \tau \nabla p^{\tau, k+1}_D$ is the optimal transport map between $\rho^{\tau, k+1}_D$ and $\mu^{\tau, k}_D$.  We observe that by definition of $T^k$, by a Taylor expansion of $\xi$ and using the fact that $p^{\tau, k+1} \in H_{\rho^{\tau, k+1}_D}$
\begin{align*}
\phantom{\int_{\mathbf{R}^d} (\rho^{\tau, k+1}_D - \mu^{\tau, k}_D)\xi dx}
&\begin{aligned}
\mathllap{\int_{\mathbf{R}^d} (\rho^{\tau, k+1}_D - \mu^{\tau, k}_D)\xi dx} = \int_{\mathbf{R}^d}(\xi(x) - \xi(T^k(x)))\rho^{k+1, \tau}_D dx
 \end{aligned}
\\ &\begin{aligned}
\mathllap{} = &\int_{\mathbf{R}^d} \nabla \xi (x) \cdot (x-T^k(x))\rho^{k+1, \tau}_D dx 
\\ &+ O\left(\Vert D^2\xi\Vert_{L^{\infty}}\int_{\mathbf{R}^d}|x - T^k(x)|^2\rho^{k+1, \tau}_D dx \right)
\end{aligned}
\\ &\begin{aligned}
\mathllap{} = &-\tau\int_{\mathbf{R}^d} \nabla \xi \cdot \nabla p^{k+1, \tau}_D dx 
\\ &+ O\left(\Vert D^2\xi\Vert_{L^{\infty}}\int_{\mathbf{R}^d}|x - T^k(x)|^2\rho^{k+1,\tau}_D dx \right).
\end{aligned}
\end{align*}
We rewrite the previous identity as
\begin{align}\label{eq:first_id}
-\int_{\mathbf{R}^d} \nabla \xi \cdot \nabla p^{k+1, \tau}_D dx = &\frac{1}{\tau}\int_{\mathbf{R}^d}(\rho^{k+1, \tau}_D- \mu^{\tau, k}_D) \xi dx 
\\ &+O\left(\Vert D^2\xi\Vert_{L^{\infty}}\frac{1}{\tau}W^{2}_2(\rho^{k+1, \tau}_D, \mu^{k, \tau}_D) \right).\nonumber
\end{align}
We now use \eqref{eq:variational_ineq_approx} with $\xi = \rho^{k+1, \tau}_D$ to get
\begin{equation}\label{eq:second_id}
\int_{\mathbf{R}^d} \nabla p^{k+1, \tau}_D \cdot \nabla p^{k+1, \tau}_D dx \le \int_{\mathbf{R}^d} p^{k+1,\tau}_D n^{k, \tau}_D \rho^{k, \tau}_D \mu^{k, \tau}_D dx.
\end{equation}
We now sum \eqref{eq:first_id} and \eqref{eq:second_id} to get, using also the definition of $\mu^{k, \tau}_D$
\begin{align*}
\int_{\mathbf{R}^d} \nabla p^{k+1, \tau}_D \cdot (\nabla p^{k+1, \tau}_D - \nabla \xi)dx \le & \frac{1}{\tau} \int_{\mathbf{R}^d} (\rho^{k+1, \tau}_D - \rho^{k, \tau}_D)\xi dx 
\\ & - \int_{\mathbf{R}^d} \rho^{k, \tau}_D n^{k, \tau}_D \xi dx
\\ &+\int_{\mathbf{R}^d} p^{k+1, \tau}_D n^{k, \tau}_D \rho^{k, \tau}_D \mu^{k, \tau}_D dx
\\ &+ O\left(\Vert D^2\xi\Vert_{L^{\infty}}\frac{1}{\tau}W^{2}_2(\rho^{k, \tau}_D, \mu^{k+1, \tau}_D) \right).
\end{align*}
In other words, if we fix $t > 0$ and we integrate over $(t, t+\delta)$ we obtain
\begin{align}
&\begin{aligned}
\int_{t}^{t+\delta}\int_{\mathbf{R}^d} \nabla p^{\tau}_D \cdot (\nabla p^{\tau}_D - \nabla \xi)dx ds
\end{aligned}
\\ &\begin{aligned}
\le & \int_{\mathbf{R}^d} (\rho^{\tau}_D(t+\delta) - \rho^{\tau}_D(t))\xi  dx +\int_{t}^{t+\delta}\int_{\mathbf{R}^d} n^{\tau}_D(t-\tau) \rho^{\tau}_D(t-\tau) (p^{\tau}_D \mu^{\tau}_D(t-\tau) - \xi) dx ds
\\ &+ O\left(\Vert D^2\xi\Vert_{L^{\infty}}\sum_{0 \le k \le \left[\frac{T}{\tau}\right]}W^{2}_2(\rho^{k+1, \tau}_D, \mu^{k, \tau}_D) \right). \label{eq:before_tau_zero}
\end{aligned}
\end{align}
Now observe that 
\begin{equation}
\frac{\tau}{2}\Vert \nabla p^{k+1, \tau}_D\Vert_{L^2(\mathbf{R}^d)}^2 = \frac{1}{2\tau}W_2^2\left( \rho^{k+1, \tau}_D, \mu^{k, \tau}_D \right),
\end{equation}
in particular, this yields
\begin{equation}
\sum_{0 \le k \le \left[\frac{T}{\tau}\right]} \frac{1}{2} W_2^2\left( \rho^{k+1, \tau}_D, \mu^{k, \tau}_D \right) = O(\tau).
\end{equation}
We now let $\tau \to 0$ in \eqref{eq:before_tau_zero}: for the left hand side we use the weak convergence of $\nabla p_D^\tau$ to $\nabla p_D$ in $L^2((t, t+\delta) \times \mathbf{R}^d)$ (Item \eqref{item:nabla_p_conv_second} in Theorem~\ref{thm:estimate_n_second_conv}), for the first right hand side term we use the pointwise $L^1(\mathbf{R}^d)$ convergence of $\rho_D^\tau(\cdot)$ (Item \eqref{item:rho_conv_second} in Theorem~\ref{thm:estimate_n_second_conv}). For the second right hand side term we use the weak convergence of $p_D^\tau$ in $L^2((t, t+\delta) \times \mathbf{R}^d)$ (Item \eqref{item:p_conv_second} in Theorem~\ref{thm:estimate_n_second_conv}), and the strong convergences of  $\rho_D^\tau$ and $n_D^\tau$ (items \eqref{item:rho_conv_second} and \eqref{item:n_conv_second} in Theorem~\ref{thm:estimate_n_second_conv}). We thus obtain \eqref{eq:claim_without_restrict}.

Now, if we additionally assume that $\xi \in H^1_{\rho_D(t)}$ we must have that $(\rho_D(t+\delta) - \rho_D(t))\xi = (\rho_D(t+\delta) - 1)\xi \le 0$, thus
\begin{align}
&\int_t^{t+\delta} \int_{\mathbf{R}^d}\nabla p_D \cdot (\nabla p_D - \nabla \xi) dx ds \le \int_t^{t+\delta}\int_{\mathbf{R}^d} n_D \rho_D(p_D \rho_D - \xi) dx ds.\nonumber
\end{align}
We divide the previous inequality by $\delta$ and let $\delta$ to zero to obtain \eqref{eq:variational_ineq_pressure}. 

It remains to prove that $\nabla p_D$ converges strongly to $\nabla p$ in $L^2((0, T) \times \mathbf{R}^d)$ as $D \to 0$. By Item \eqref{item:weak_conv_grad} in Theorem~\ref{thm:conv_D_zero} we already know that $\nabla p_D$ converges to $\nabla p$ weakly in $L^2((0, T) \times \mathbf{R}^d)$. Since this is a Hilbert space, the strong convergence follows once we prove that
\begin{equation}
\lim_{D \to 0} \int_0^T \Vert \nabla p_D(t) \Vert_{L^2(\mathbf{R}^d)}^2 dt =  \int_0^T \Vert \nabla p(t) \Vert_{L^2(\mathbf{R}^d)}^2 dt.
\end{equation}
To show this, we preliminary observe that
\begin{equation}\label{eq:convergence_triple}
\lim_{D \to 0} \int_0^T \int_{\mathbf{R}^d} p_Dn_D\rho_D dx dt = \int_0^T \int_{\mathbf{R}^d} p n \rho dx dt.
\end{equation}
Indeed, this follows from the weak $L^2((0, T) \times \mathbf{R}^d)$ convergence of $p_D$ (Item \eqref{item:weak_conv_grad} in Theorem~\ref{thm:conv_D_zero}), the strong $L^1((0, T) \times \mathbf{R}^d)$ convergence of $\rho_D$ to $\rho$ (Item \eqref{item:rho_conv} in Theorem~\ref{thm:conv_D_zero}), the strong $L^2((0, T), L^2_{loc}(\mathbf{R}^d))$ convergence of the nutrients  in Lemma~\ref{lem:convergence_nut} and the fact that, thanks to Lemma~\ref{lem:bound_supp} and Theorem~\ref{thm:estimate_n_second_conv}, we have
\begin{equation*}
\bigcup_{D \ge 0} \bigcup_{0 \le t \le T} \operatorname{supp}(\rho_D(t)) \subset B_{R(T)},
\end{equation*}
where $R(T)$ is as in \eqref{eq:def_rad} in Lemma~\ref{lem:bound_supp}. We now choose $\xi = 2p(x, t)$ in \eqref{eq:variational_ineq_approx} for $D = 0$ and we get, using also the weak lower-semicontinuity of the $L^2$-norm and \eqref{eq:convergence_triple}
\begin{align*}
\liminf_{D\to 0} \int_0^T \Vert \nabla p_D(t)\Vert_{L^2(\mathbf{R}^d)}^2dt &\ge  \int_0^T \Vert \nabla p(t)\Vert_{L^2(\mathbf{R}^d)}^2dt
\\ &\ge \int_0^T \int_{\mathbf{R}^d} pn\rho dx dt
\\ &= \limsup_{D \to 0} \int_0^T \int_{\mathbf{R}^d} p_Dn_D\rho_D dx dt
\\ &\ge \limsup_{D \to 0} \int_0^T \Vert \nabla p_D(t)\Vert_{L^2(\mathbf{R}^d)}^2 dt,
\end{align*}
where in the last line we used \eqref{eq:variational_ineq_pressure} with $\xi = 0$.
\end{proof}
\subsection{Hausdorff convergence of the tumor patches: Proof of Theorem~\ref{thm:conv_patches}} 
The purpose of this subsection is to give a proof of Theorem~\ref{thm:conv_patches}. Recalling the definition of Hausdorff distance, we need to show, under the assumptions of Theorem~\ref{thm:conv_patches}, that the following two statements hold true
\begin{align}
&\lim_{D\to 0} \sup_{x \in \Gamma(t)} d(x, \Gamma_D(t)) = 0,\quad t \in (\hat{t}, T),\label{eq:easy_part}
\\ & \lim_{D \to 0} \sup_{x \in \Gamma_D(t)} d(x, \Gamma(t)) = 0,\quad t \in (\hat{t}, T).\label{eq:hard_part}
\end{align}
We first show that \eqref{eq:easy_part} holds true, this is the content of the following proposition. We warn the reader that hereafter $\omega_d$ is a constant denoting the Lebesgue measure of the unit ball in $\mathbf{R}^d$.
\begin{proposition}\label{prop:easy_part}
Under the assumptions of Theorem~\ref{thm:conv_patches}, we have that for every $t \in [\hat{t}, T]$
\begin{equation}
\lim_{D\to 0} \sup_{x \in \Gamma(t)} d(x, \Gamma_D(t)) = 0.
\end{equation}
\end{proposition}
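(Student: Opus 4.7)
The plan is to argue by contradiction. Suppose there exist $\varepsilon > 0$, a sequence $D_j \to 0$, and points $x_j \in \Gamma(t)$ such that $d(x_j, \Gamma_{D_j}(t)) \geq \varepsilon$ for all $j$. All the patches sit uniformly inside a fixed ball by Lemma~\ref{lem:bound_supp}, so up to a subsequence $x_j \to x_\infty$, and since $\Gamma(t)$ is closed (being a topological boundary), $x_\infty \in \Gamma(t)$. For $j$ large enough the ball $B := B_{\varepsilon/2}(x_\infty)$ is contained in $B_\varepsilon(x_j)$, hence $B \cap \Gamma_{D_j}(t) = \emptyset$. Because $B$ is connected and $\Gamma_{D_j}(t) = \partial\{p_{D_j}(t) > 0\}$, the ball must lie entirely either in the open set $\{p_{D_j}(t) > 0\}$ or in its complement $\{p_{D_j}(t) = 0\}$, yielding two cases.

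In the first case, $B \subset \{p_{D_j}(t) > 0\}$, the orthogonality condition in Definition~\ref{eq:weak_solutions}\eqref{item:orthog} forces $\rho_{D_j}(t) \equiv 1$ on $B$. Passing to the limit with the $L^1$-convergence of the densities (Theorem~\ref{thm:conv_D_zero}\eqref{item:rho_conv}) gives $\rho(t) = 1$ a.e.\ on $B$. Under assumption \eqref{item:regularity_ass}, the pressure saturates the tumor patch at $D = 0$, i.e.\ $\{p(t) > 0\} = \operatorname{int}\{\rho(t) = 1\}$. This identity will be extracted from the $C^1$ regularity of $\Gamma(t)$ and the monotone-expanding property of the patches, and it implies $B \subset \{p(t) > 0\}$, contradicting $x_\infty \in \partial\{p(t) > 0\}$.

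The second case, $p_{D_j}(t) = 0$ on $B$, is the more delicate one, and this is where I plan to combine the monotone-expanding property of the tumor patches with the strong $L^2$ convergence of $p_{D_j} \to p$ provided by Proposition~\ref{prop:strong_conv_pressure} (cf.\ Remark~\ref{rem:density_smooth}\eqref{item:pressure_conver}). The expected mechanism is as follows: the monotone expansion of the pressure supports in time, together with the instantaneous vanishing $p_{D_j}(t) = 0$ on $B$, propagates the vanishing backward to $p_{D_j}(s) = 0$ on $B$ for every $s \in [0, t]$. Integrating in time and passing to the limit via the strong $L^2$ convergence then forces $p \equiv 0$ a.e.\ on $(0, t) \times B$. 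Invoking the continuity in time and monotonicity of the expanding family $\{p(s) > 0\}$ at $D = 0$, inherited from the uniform $C^1$ regularity of $\Gamma(s)$, yields $\{p(t) > 0\} \cap B = \emptyset$ as sets after choosing the continuous representative, which contradicts $x_\infty \in \partial\{p(t) > 0\}$.

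The hardest step will be Case B: one has to set up the monotone-expansion argument for the pressure supports with enough care to upgrade the instantaneous identity $p_{D_j}(t) = 0$ on $B$ to a space-time statement on $(0, t) \times B$, where the strong $L^2$ convergence from Proposition~\ref{prop:strong_conv_pressure} is genuinely available, and then to argue that the limiting pressure cannot be everywhere zero on $B$ without contradicting the assumption that $x_\infty$ lies on $\partial\{p(t) > 0\}$. This is precisely the combination of the two ingredients highlighted in the introduction: the $L^1$-convergence of the densities upgraded through the monotone-expanding property, and the strong $H^1$-convergence of the pressure variables.
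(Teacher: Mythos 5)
Your compactness reduction and Case~A are sound and close in spirit to what the paper does: after reducing to a ball $B$ around a limit point $x_\infty\in\Gamma(t)$ that misses $\Gamma_{D_j}(t)$, the case $B\subset\{p_{D_j}(t)>0\}$ is handled exactly by the ingredients the paper also uses, namely $p_{D_j}(1-\rho_{D_j})=0$, the $L^1$ convergence $\rho_{D_j}(t)\to\rho(t)$ (Theorem~\ref{thm:conv_D_zero}\eqref{item:rho_conv}), and the identity $\{\rho(t)=1\}=\{p(t)>0\}$ together with the $C^1$ regularity of $\Gamma(t)$.

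The genuine gap is Case~B, and it is not merely a technical step you have postponed: the mechanism you invoke --- monotone expansion in time of the positivity sets $\{p_{D}(s)>0\}$ for $D>0$, so that $p_{D_j}(t)=0$ on $B$ propagates backward to $p_{D_j}\equiv 0$ on $(0,t)\times B$ --- is not available from anything stated in the paper and is itself nontrivial. What is known (and used in the paper) is the expansion of the congested sets $\{\rho_D(s)=1\}$; transferring this to the pressure supports requires showing that $p_D(s)>0$ on the interior of $\{\rho_D(s)=1\}$ for $D>0$, i.e.\ a $D>0$ analogue of \cite[Lemma 4.6]{Jacobs2022}. That in turn needs the elliptic equation $-\Delta p_D=n_D$ on the congested region at a fixed time slice plus a strong maximum principle with the lower bound $n_D\ge e^{-T}\lambda$ --- precisely the kind of argument the paper only carries out for the \emph{time-averaged} pressure $w_D$ via the weak Harnack inequality in Proposition~\ref{prop:inner_control}, because $p_D$ is only an $L^2$-in-time object and pointwise-in-time statements (such as your ``instantaneous vanishing at time $t$'' propagated to all $s\le t$) are delicate; note that even the variational inequality \eqref{eq:variational_ineq_pressure} is stated only at Lebesgue points of $p_D$. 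The paper's own proof avoids any time propagation altogether: it is a direct quantitative argument at the single time $t$, combining (i) a uniform exterior cone property of the $C^1$ patch, which shows $\{\rho(t)=1\}$ underfills every small ball centered on $\Gamma(t)$ by a definite fraction, (ii) a positive lower bound $c_0$ on the averages $\dashint_{B_\delta(y)}p(t)$ for points $y$ of the patch within distance $\delta$ of $\Gamma(t)$ (by continuity and compactness), and (iii) $L^1$ smallness of $\rho_D(t)-\rho(t)$ and of $p_D-p$, followed by a segment-crossing argument producing a point of $\Gamma_D(t)$ within $2\delta$ of any $x\in\Gamma(t)$. Notably, this route uses neither the monotonicity of the patches nor the strong $H^1$ convergence of Proposition~\ref{prop:strong_conv_pressure} (weak/$L^1$ convergence suffices here); those tools are reserved for the opposite inclusion \eqref{eq:hard_part}. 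To salvage your Case~B you would either have to prove the positivity of $p_D$ on the congested region for $D>0$ at time slices (or an a.e.-in-time version of it), or replace the propagation argument by a quantitative lower bound at time $t$ of the type the paper uses.
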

\begin{proof}[Proof of Proposition~\ref{prop:easy_part}]
Fix $t \in [\hat{t}, T]$ and take an arbitrary $\epsilon > 0$, we have to show that for $D$ sufficiently small and for every $x \in \Gamma(t)$
\begin{equation}\label{eq:eps_d_claim}
d(x, \Gamma_D(t)) < \epsilon.
\end{equation}
To this aim, fix $\delta < 1$ to be determined later. We observe that by \cite[Lemma 4.6]{Jacobs2022} we have $\{\rho(t) = 1\} = \{p(t) > 0\}$. Since $n_0 \ge \lambda > 0$, it is also easy to see that $\operatorname{Int}\{\rho(t) = 1\} = \operatorname{Int}\{p(t) > 0\}$. In particular, $\Gamma(t) = \partial\{\rho(t) = 1\}$. We observe that since the boundary $\partial\{\rho(t) = 1\}$ is $C^1$ uniformly in time, there exist $\alpha \in (0, \frac{\pi}{2}]$ and $\beta > 0$ such that for every $t \in [\hat{t}, T]$ the set $\{\rho(t) = 1\}$ satisfies the uniform exterior cone property with parameters $\alpha$ and $\beta$. This means that for every point $x \in \partial\{\rho(t) = 1\}$ we may find a unit vector $v$ such that
\begin{equation}
    x + K_{\alpha}(v) \cap B_{\beta}(x) \subset \mathbf{R}^d\setminus\{p(t) > 0\},
\end{equation}
where $K_{\alpha}(v) := \{ z \in \mathbf{R}^d: z \cdot v \ge |z|\operatorname{cos}(\alpha)\}$. In particular, for any $x \in \partial\{\rho(t) = 1\}$, if $\delta \le \beta$ we have
\begin{equation}\label{eq:notfillingball}
    |\{\rho(t) = 1\} \cap B_{\delta}(x)| \le \left( 1-\frac{\alpha}{2\pi} \right) \omega_d \delta^d.
\end{equation}
We now assume that $\delta \le \beta$. We define
\begin{equation}
c_0 := \inf_{y\in \overline{\{p(t) > 0\}},\, d(y, \Gamma(t)) \le \delta} \dashint_{B_{\delta}(y)} p(t, z) dz.
\end{equation}
We observe that $c_0 > 0$, because the function
\begin{equation}
y \mapsto \dashint_{B_{\delta}(y)} p(t, z) dz
\end{equation}
is continuous and the set $\{ y \in \overline{\{p(t) > 0\}}:\, d(y, \Gamma(t)) \le \delta\}$ is compact. By Item \eqref{item:pressure_conver} in Remark~\ref{rem:density_smooth} and by Item \eqref{item:rho_conv} in Theorem~\ref{thm:conv_D_zero} we can select $\overline{D} > 0$ such that for every $D \le \overline{D}$ 
\begin{align}
\Vert p_D - p \Vert_{L^1((0, T) \times \mathbf{R}^d)} &\le c_0 \omega_d \frac{\delta^d}{2},\label{eq:choice_pressure} 
\\ \Vert \rho_D(t) - \rho(t) \Vert_{L^1(\mathbf{R}^d)} &\le \omega_d \frac{\alpha}{4\pi}\delta^d.\label{eq:choice_patch}
\end{align}
Let $x \in \Gamma(t)$. For each $D \le \overline{D}$ we distinguish three cases:
\begin{enumerate}
\item If $x \in \Gamma_D(t)$ then by definition $d(x, \Gamma_D(t)) = 0$ and \eqref{eq:eps_d_claim} holds trivially.
\item If $x \in \{p_D(t)>0\}$, using \eqref{eq:notfillingball} and \eqref{eq:choice_patch}
we have
\begin{equation}
\Vert \rho_D(t) \Vert_{L^1(B_{\delta}(y))} \le \left( 1-\frac{\alpha}{4\pi}\right)\omega_d \delta^d.
\end{equation}
Since $\rho_D(t) \in \{0,1\}$ this implies that there exists $z \in B_{\delta}(y) \setminus \{p_D(t) > 0\}$. Define
\begin{equation}
r := \sup\left\{ r \in [0, 1]:\ z + r(x-z) \not \in \{p_D(t) > 0\} \right\},
\end{equation}
it follows that $z + r(x-z) \in \Gamma_D(t)$ and
\begin{equation}
|x-(z+r(x-z))| \le |1-r||x-z| \le 2\delta.
\end{equation}
Thus $d(x, \Gamma_D(t)) \le 2\delta < \epsilon$ provided we choose $\delta$ small enough.
\item If $x \not \in \overline{\{p_D(t)>0\}}$, then using \eqref{eq:choice_pressure} we get
\begin{equation}
\dashint_{B_{\delta}(y)} p_D(t, z)dz \ge \frac{c_0}{2} > 0,
\end{equation}
thus there exists $z \in B_\delta(y) \cap \{p_D(t) > 0\}$ and a similar reasoning as in the previous case yields \eqref{eq:eps_d_claim} if $\delta$ is small enough.\qedhere
\end{enumerate}
\end{proof}
We will now consider the harder part \eqref{eq:hard_part}. The first step is Proposition~\ref{prop:inner_control}, which allows us to control the growth of the fingers -- or better, tubes -- of the $D>0$ patch inside the $D = 0$ patch. Before stating the result, let us introduce some notation. For a given $s > 0$ and for any time $t > 0$ we define
\begin{equation}
U_s(t) := \{ x \in \{p(t) > 0\}:\ d(x, \Gamma(t)) \ge s\}.
\end{equation}
We then have the following result.
\begin{proposition}\label{prop:inner_control}
With the assumptions in Theorem~\ref{thm:conv_patches} and  $r_{tub}$ as defined above, there exists $\delta_0 > 0$ such that  the following holds: for every $\delta < \delta_0$ there exists $D_0(\delta)>0$ such that whenever $D \le D_0$ we have 
\begin{equation}
U_{5\delta}(t) \subset \{p_D(t+\sqrt{\delta}) > 0\} \hbox{ for } t\in [\hat{t}, T-\sqrt{\delta}].
\end{equation}
\end{proposition}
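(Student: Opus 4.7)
The plan is to proceed by contradiction, combining three ingredients already available in the excerpt: (i) the monotone expansion of the $\{\rho_D = 1\}$ patches, which holds because $b=0$, $n_D \ge 0$, and $\rho_0$ is a characteristic function; (ii) the $L^1$-convergence $\rho_D(\cdot,t) \to \rho(\cdot,t)$ provided by Item~\eqref{item:rho_conv} of Theorem~\ref{thm:conv_D_zero}; and (iii) the strong $H^1$-convergence of the pressure from Proposition~\ref{prop:strong_conv_pressure}, the only tool strong enough to rule out thin ``holes'' in the $D>0$ patch that $L^1$-convergence cannot detect.

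First I would set up the contradiction. Assume along sequences $D_j \to 0$ and $t_j \to t_\infty \in [\hat{t}, T - \sqrt{\delta}]$ there exist $x_j \in U_{5\delta}(t_j)$ with $x_j \notin \{p_{D_j}(\cdot, t_j + \sqrt{\delta}) > 0\}$, and after extracting a subsequence $x_j \to x_\infty$. Since $\rho_{D_j}$ is a characteristic function whose support expands monotonically in time, and $\{p_{D_j}(\cdot, s) > 0\}$ coincides with $\operatorname{Int}\{\rho_{D_j}(\cdot, s) = 1\}$ up to null sets, the hypothesis yields an open ball $B_{r_j}(x_j) \subset \mathbf{R}^d \setminus \{p_{D_j}(\cdot, s) > 0\}$ for every $s \in [0, t_j + \sqrt{\delta}]$; equivalently $\rho_{D_j}(\cdot, s) \equiv 0$ on this ball throughout that time interval.

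The crux of the argument, and the step I expect to be the main obstacle, is to show that $r_j$ admits a uniform lower bound $c(\delta) > 0$ independent of $j$. The $L^1$-convergence of $\rho_{D_j}$ alone is insufficient, since it permits arbitrarily thin tubes of vanishing $\rho_{D_j}$ cutting through $U_{5\delta}(t_j)$; this is precisely where the tube parameter $r_{tub}$ and Proposition~\ref{prop:strong_conv_pressure} enter. Under assumption~\eqref{item:regularity_ass} of Theorem~\ref{thm:conv_patches}, a barrier argument applied to $-\Delta p = n\rho$ inside $\{p > 0\}$ yields a uniform lower bound $p \ge \sigma(\delta) > 0$ on $U_{4\delta}(s)$ for $s$ close to $t_\infty$. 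A tube of radius $r_j$ on which $p_{D_j}$ is forced to vanish while surrounded by $p \ge \sigma(\delta)$ would then contribute to $\int_{t_j}^{t_j + \sqrt{\delta}} \int_{\mathbf{R}^d} |\nabla(p_{D_j} - p)|^2\, dx\, ds$ an amount bounded below by a Hardy--Poincar\'e-type inequality, a quantity incompatible with the strong $H^1$-convergence of Proposition~\ref{prop:strong_conv_pressure} unless $r_j \ge c(\delta)$. The time lag $\sqrt{\delta}$ is inserted precisely to make this integrated energy defect comfortably detectable.

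Once $r_j \ge c(\delta)$ is established, the ball $B_{c(\delta)/2}(x_\infty)$ lies in the complement of $\{\rho_{D_j}(\cdot, t_j) = 1\}$ for all $j$ large. On the other hand, $x_\infty \in \overline{U_{5\delta}(t_\infty)}$, so by monotone expansion of the $D = 0$ patch together with the continuity of $t \mapsto \rho(\cdot, t)$ in $L^1(\mathbf{R}^d)$, for $\delta$ small enough $B_{c(\delta)/2}(x_\infty) \subset \{\rho(\cdot, t_\infty) = 1\}$. Consequently $\|\rho(\cdot, t_\infty) - \rho_{D_j}(\cdot, t_j)\|_{L^1(B_{c(\delta)/2}(x_\infty))} \ge \omega_d (c(\delta)/2)^d$ for all large $j$, which contradicts Item~\eqref{item:rho_conv} of Theorem~\ref{thm:conv_D_zero} and closes the argument.
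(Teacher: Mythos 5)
There is a genuine gap at what you yourself identify as the crux. First, the reduction is already shaky: from $x_j \notin \{p_{D_j}(\cdot,t_j+\sqrt{\delta})>0\}$ you cannot extract a ball $B_{r_j}(x_j)$ on which $p_{D_j}$ vanishes for all earlier times ($x_j$ may lie on the boundary of the positivity set), and ``$p_{D_j}=0$ on a ball'' does not imply ``$\rho_{D_j}=0$ on that ball'': only $\{p_D>0\}\subset\{\rho_D=1\}$ is free; the reverse inclusion is itself something that must be proved (for $D=0$ the paper quotes \cite[Lemma 4.6]{Jacobs2022}; for $D>0$ it is obtained in the proof via the lower bound $n_D\ge e^{-T}\lambda$ and the maximum principle). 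Second, and more seriously, the proposed mechanism for the lower bound $r_j\ge c(\delta)$ is quantitatively wrong. If $p_{D_j}$ vanishes on a ball (or thin tube) of radius $r_j$ while $p\ge\sigma(\delta)$ nearby, the resulting defect in $\int\int|\nabla(p_{D_j}-p)|^2$ is controlled by the capacity of the hole, i.e.\ it scales like $\sigma(\delta)^2\, r_j^{d-2}$ for $d\ge 3$ (and like $\sigma(\delta)^2/\log(1/r_j)$ for $d=2$), which tends to $0$ as $r_j\to 0$. Hence strong $H^1$ convergence of $p_D$ is perfectly compatible with arbitrarily small pressure-free holes, no Hardy--Poincar\'e inequality produces a $j$-independent energy defect, and the contradiction you aim for does not materialize. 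In short, pointwise positivity of $p_{D_j}(t_j+\sqrt{\delta})$ at a prescribed point cannot be extracted from norm convergence alone.

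The paper closes exactly this gap with PDE structure rather than capacity. It introduces the time-averaged pressure $w_D(t,x)=\frac{1}{\sqrt{\delta}}\int_t^{t+\sqrt{\delta}}p_D(s,x)\,ds$ and shows, by integrating the $\rho_D$-equation in time and using monotone expansion of the patch, that $-\Delta w_D\ge -1/\sqrt{\delta}$; the weak Harnack inequality for supersolutions then converts the $L^1$-average closeness of $w_D$ to $w$ (combined with the nondegeneracy $p\ge\frac{\kappa_T}{2}\delta$ on $U_\delta(t)$, the analogue of your $\sigma(\delta)$, coming from the uniform $C^1$ assumption) into a \emph{pointwise} bound $\inf_{B_\delta(x_0)}w_D>0$ for $x_0\in U_{5\delta}(t)$. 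Positivity of $w_D$ on the ball plus monotonicity places $B_\delta(x_0)$ inside $\{\rho_D(t+\sqrt{\delta})=1\}$, where $-\Delta p_D(t+\sqrt{\delta})=n_D\ge e^{-T}\lambda>0$, and the maximum principle finally yields $p_D(t+\sqrt{\delta})>0$ there; the threshold $D_0(\delta)$ is chosen directly from the $L^1$ convergence of $p_D$ to $p$, with no compactness argument needed. Your auxiliary ingredients (nondegeneracy of $p$ on $U_\delta$, monotone expansion, convergence of $\rho_D$ and $p_D$) are the right ones, but the Harnack/maximum-principle step is the missing idea, and the capacity-type substitute you propose fails.
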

\begin{proof}
The proof is divided into five steps.
\\
Step 1. We define
\begin{equation}
\kappa_T := \inf_{\hat{t} \le t \le T} \inf_{v \in \partial \Omega_t} |\nabla p(t, v)|,
\end{equation}
and we claim that if $\delta$ is small enough
\begin{equation}\label{eq:lower_bound_D_zero}
\inf_{t \in [\hat{t}, T]} \inf_{y \in U_{\delta}(t)} p(t, y) \ge\frac{\kappa_T}{2}\delta.
\end{equation}
To show this, let $y \in U_{\delta}(t)$, and let $x \in \partial\{p(t) > 0\}$ be any point on $\partial\{p(t) > 0\}$ such that $|y-x| = d(y, \partial\{p(t) > 0\})$. Since the boundary of $\{p(t) > 0\}$ is $C^1$ we have that $y-x$ is parallel to the inner normal $\frac{\nabla p}{|\nabla p|}(t, x)$ at $x$, thus, using also that $p(t, x) = 0$
\begin{align}
p(t, y) &= p(t, x) + \nabla p(t, x) \cdot (y - x) + o(\delta)
\\ &= 0 + |\nabla p(t, x)||y - x| + o(\delta)
\\ &\ge  |\nabla p(t, \pi(y, t))|\delta + o(\delta).
\end{align}
We have, if $\delta$ is chosen small enough,
\begin{equation}
p(t, y) \ge \kappa_T \delta +o(\delta) \ge \frac{\kappa_T}{2}\delta.
\end{equation}
which is \eqref{eq:lower_bound_D_zero}.

Step 2. We claim that for every $D > 0$
\begin{equation}\label{eq:lower_bound_n_claimed}
\inf_{x \in \mathbf{R}^d, t \in [0, T]} n_D(t, x) \ge e^{-T}\lambda, 
\end{equation}
where $\lambda$ is defined as in the statement of Theorem~\ref{thm:conv_patches}.
\\
To prove \eqref{eq:lower_bound_n_claimed} we work with the approximate nutrient variables $\{n_D^\tau\}_{\tau > 0}$ obtained by using Scheme \ref{item:first_scheme}. Recall that for every $k \in \mathbf{N}$, and every $\tau > 0$, we have
\begin{equation}
n_D^{k+1, \tau} = e^{\tau D \Delta}( n_D^{k, \tau}(1-\tau \rho_D^{k+1, \tau})).
\end{equation}
If we assume inductively that $n_D^{k, \tau} \ge \lambda (1-\tau)^k$ (which is true for $k = 0$ by assumption), then we easily get $n_D^{k+1, \tau} \ge \lambda(1-\tau)^{k+1}$. Thus for every $k \in \mathbf{N}$
\begin{equation}
n_D^{k, \tau} \ge \lambda(1-\tau)^k.
\end{equation}
For $\tau$ sufficiently small, the right-hand side is bounded from below by $\lambda e^{-k\tau}$. Thus for $k \le \left[\frac{T}{\tau}\right]$
\begin{equation}
n_D^{k, \tau} \ge e^{-T}\lambda.
\end{equation}
We now pick any $\varphi \in L^1((0, T) \times \mathbf{R}^d), \varphi \ge 0$, and observe that by the previous inequality and by using the weak-$*$ convergence of $n_D^\tau$ to $n_D$ in $L^{\infty}(\mathbf{R}^d)$ (which is proved in \cite[Proposition 3.6]{Jacobs2022}) we obtain
\begin{align*}
\int_0^T \int_{\mathbf{R}^d} \varphi(n_D - e^{-T}\lambda) dx dt = \lim_{\tau \to 0} \int_0^T \int_{\mathbf{R}^d} \varphi (n_D^\tau - e^{-T}\lambda) dx dt \ge 0.
\end{align*}
Since $\varphi \in L^1((0, T) \times \mathbf{R}^d)$ was arbitrary we infer \eqref{eq:lower_bound_n_claimed}.

Step 3. Fix $\gamma = \sqrt{\delta}$. We claim that for every $D > 0$ the function
\begin{equation}
w_D(t, x) := \frac{1}{\gamma}\int_t^{t+\gamma} p_D(s, x)ds,
\end{equation}
solves for every $t > 0$
\begin{equation}\label{eq:supersol}
-\Delta w_D(t) \ge -\frac{1}{\gamma}\quad \text{on}\ \mathbf{R}^d.
\end{equation}
\\
To see that \eqref{eq:supersol} holds, we integrate in time the $\rho_D$ equation from $t$ to $t+\gamma$ to get
\begin{equation}
-\Delta w_D(t) = \frac{1}{\gamma}\rho_D(t) + \frac{1}{\gamma}\int_t^{t+\gamma}\rho_D(s)n_D(s)ds - \frac{1}{\gamma}\rho_D(t+\gamma) \ge -\frac{1}{\gamma}.
\end{equation}

Step 4. We claim that there exists a constant $c_d$ depending only on the dimension $d$ such that for every $t \in [\hat{t}, T]$ and for every $D > 0$, if $x_0 \in U_{5\delta}(t)$ is such that 
\begin{align}
\begin{aligned}\label{eq:maximal_bound}
\dashint_{B_{2\delta}(x_0)} |w_D(t, y) - w(t, y)| dy \le c_d\frac{\kappa_T}{4}\delta
\end{aligned}
\end{align}
then, provided $\delta$ is small enough, we have
\begin{equation}\label{eq:control_harnack}
\inf_{B_{\delta}(x_0)} w_D(t, \cdot) \ge c_0,
\end{equation}
where we define $c_0 = \frac{c_d\kappa_T\delta}{8}$. In particular, we have that
\begin{equation}\label{eq:easycon}
B_{\delta}(x_0) \subset \{ p_D(t+\sqrt{\delta}) > 0\}.
\end{equation}
We first show that inclusion \eqref{eq:easycon} follows from \eqref{eq:control_harnack}. Indeed, observe that for any $x \in \mathbf{R}^d$, since the set $\{\rho_D(t, \cdot) = 1\}$ is expanding in time, we have
\begin{align*}
w_D(t, x)(1-\rho_D(t+\gamma, x)) &= \frac{1}{\gamma}\int_t^{t+\gamma} p_D(s, x)(1-\rho_D(t+\gamma, x)) ds 
\\ &\le \frac{1}{\gamma}\int_t^{t+\gamma}p_D(s, x)(1-\rho_D(s,x))ds = 0.
\end{align*}
This implies that $B_{\delta}(x_0) \subset \{w_D(t, \cdot) > 0\} \subset \operatorname{Int}\{\rho_D(t+\gamma, \cdot) = 1 \}$. In particular, we have
\begin{equation}
-\Delta p_D(t+\gamma) = n_D\quad \text{on}\ B_{\delta}(x_0).
\end{equation}
By what we proved in Step 2, we have that 
\begin{equation}
-\Delta p_D(t+\gamma) \ge e^{-T} \lambda\quad \text{on}\ B_{\delta}(x_0),
\end{equation}
so that the maximum principle implies that $p_D(t+\gamma)$ is strictly positive on $B_{\delta}(x_0)$.
\\
To show \eqref{eq:control_harnack} we use the weak Harnack's inequality \cite[Theorem 8.18]{Gilbarg2001} with $q = d+1$. Since $w_D$ satisfies \eqref{eq:supersol}, we have that there exists a constant $c_d$ depending only on the dimension $d$ such that
\begin{equation}
\inf_{B_{\delta}(x_0)} w_D(t, \cdot) + \frac{\delta^{2-\frac{2d}{q} + \frac{2d}{q}}}{\gamma} \ge c_d \dashint_{B_{2\delta}(x_0)} w_D(t, x) dx.
\end{equation}
In other words
\begin{equation}\label{eq:an_ineq}
\inf_{B_{\delta}(x_0)} w_D(t, \cdot) \ge c_d \dashint_{B_{2\delta}(x_0)} w_D(t, x) dx - \delta^{\frac{3}{2}}.
\end{equation}
Now we observe that due to \eqref{eq:lower_bound_D_zero} and \eqref{eq:maximal_bound} we have
\begin{align}
\dashint_{B_{2\delta(x_0)}} w_D(t, x) dx &= \dashint_{B_{2\delta}(x_0)} \frac{1}{\gamma}\int_t^{t+\gamma}p_D(s, x)ds dx
\\ &= \dashint_{B_{2\delta}(x_0)} \frac{1}{\gamma}\int_t^{t+\gamma}p(s, x)ds dx + \dashint_{B_{2\delta}(x_0)} \big(w_D(t, x) - w(t, x)\big) dx
\\ &\ge \frac{\kappa_T}{2}\delta - c_d\frac{\kappa_T}{4}\delta.
\end{align}
In particular, we get
\begin{equation}
\inf_{B_{\delta}(x_0)} w_D(t, \cdot) \ge \frac{c_d\kappa_T\delta}{8},
\end{equation}
provided $\delta$ is small enough, the smallness depending only on the dimension $d$.
\\
Step 5. Conclusion. By Item \eqref{item:pressure_conver} in Remark~\ref{rem:density_smooth} we can find $D_0(\delta) > 0$ such that for $D \le D_0$
\begin{align}
\begin{aligned}
\Vert p - p_D \Vert_{L^1((0, T)\times \mathbf{R}^d)} \le \frac{\omega_d c_d \kappa_T \delta^{d+1}\gamma}{4}.
\end{aligned}\label{eq:press_cond}
\end{align}
We now fix $D \le D_0$ and we observe that if $z \in U_{5\delta}(t)$ then 
\begin{equation}\label{eq:whatclaim}
\dashint_{B_{2\delta}(x_0)} |w_D(t, y) - w(t, y)|dy \le c_d\frac{\kappa_T}{4}\delta.
\end{equation}
Indeed, by \eqref{eq:press_cond} we get
\begin{align*}
\dashint_{B_{2\delta}(x_0)} \left| w_D(t, y) - w(t,y) \right|dy &\le \frac{1}{\gamma}\int_t^{t+\gamma} \dashint_{B_{2\delta}(x_0)} |p_D(s, y) - p(s, y)| dy ds
\\ & \le c_d\frac{\kappa_T}{4}\delta. 
\end{align*}
Since $z \in U_{5\delta}(t)$, we infer from Step 3 that $z \in \{p_D(t+\sqrt{\delta})>0\}$. Since $z \in U_{5\delta}(t)$ was arbitrary we get the claim.
\end{proof}

The second step in the proof of Theorem~\ref{thm:conv_patches} is the following proposition which allows us to control the growth of fingers outside the smooth patch. Hereafter, for $x_0 \in \mathbf{R}^d$ and $0 < r_1 < r_2$, we denote by $A_{r_1, r_2}(x_0)$ the annulus given by
\begin{equation}
A_{r_1, r_2}(x_0) := B_{r_2}(x_0) \setminus B_{r_1}(x_0)
\end{equation}
 
\begin{proposition}\label{prop:ext_cont}
Assume that $n_0 \in L^{\infty}(\mathbf{R}^d)$ is such that $\nabla n_0 \in L^2_{loc}(\mathbf{R}^d)$, and that $\rho_0~=~\chi_{\Omega_0} \in BV(\mathbf{R}^d)$. For every $D > 0$ let $(n_D, \rho_D, p_D)$ be the unique weak solution to \eqref{eq:model_eqnts} in the sense of Definition~\ref{eq:weak_solutions} with initial values ($n_0, \rho_0$). Let $(n, \rho, p)$ be the unique weak solution to \eqref{eq:model_eqnts} with $D=0$ in the sense of Definition~\ref{eq:weak_solutions} with same initial values. Suppose that $B_{R}(x_0) \subset \{ \rho(t_1) = 0\}$ for some $R \le 1$ and some $t_1 \in (0, T]$. Then there exists $ \delta_{0} = \delta_0(R, d, \Vert n_0 \Vert_{L^{\infty}(\mathbf{R}^d)})$ and a constant $c = c({d, \Vert n_0 \Vert_{L^{\infty}(\mathbf{R}^d)}})$ such that the following holds:

\medskip

For every $\delta \le \delta_0$ there exists $D_0=D_0(\delta, T) > 0$ such that if $D<D_0$ then
\begin{equation}
B_{R}(x_0) \cap \{p_D(t_0) > 0\} \subset A_{\frac{R}{2}, R}(x_0) \,\hbox{ implies } \,B_{R}(x_0) \cap \{p_D(t_1) > 0\} \subset A_{\frac{R}{4}, R}(x_0),
\end{equation}
 as long as $t_0$ satisfies  
 \begin{equation*}
 0<t_1-t_0 \le \frac{R^k}{8c\delta},\quad k={4+\frac{d}{d+1}+\frac{d}{2}}.
 \end{equation*}
\end{proposition}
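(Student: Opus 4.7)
The strategy is to control how far the $D>0$ patch can creep inward during $[t_0, t_1]$ by applying Moser's local boundedness to a time-averaged pressure, in a manner dual to the argument of Proposition~\ref{prop:inner_control}. I first record two monotonicity consequences: since $\rho_0 = \chi_{\Omega_0}$ and $n_D, n \ge 0$, the patches $\{\rho_D = 1\}$ and $\{\rho = 1\}$ are both monotone increasing in time. The hypothesis $B_R(x_0) \subset \{\rho(t_1) = 0\}$ therefore propagates backward, giving $p(s, \cdot) \equiv 0$ on $B_R(x_0)$ for all $s \in [0, t_1]$; the hypothesis on $t_0$, together with monotonicity of $\{\rho_D = 1\}$, gives $\rho_D(t_0) \equiv 0$ on $B_{R/2}(x_0)$.

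Define $w_D(x) := \frac{1}{t_1 - t_0}\int_{t_0}^{t_1} p_D(s, x)\,ds$. Integrating the weak form of $\partial_t \rho_D = \Delta p_D + n_D \rho_D$ from $t_0$ to $t_1$ and using that $\rho_D(t_0) \equiv 0$ on $B_{R/2}(x_0)$ yields, as distributions on $B_{R/2}(x_0)$,
\[ -(t_1 - t_0)\Delta w_D = \int_{t_0}^{t_1} n_D \rho_D \,ds - \rho_D(t_1). \]
Thus $w_D \ge 0$ is a distributional subsolution of $-\Delta w \le \Vert n_0 \Vert_\infty$, with right-hand side supported on $\{w_D > 0\}$ (which by monotonicity coincides, up to null sets, with $\{p_D(t_1) > 0\}$). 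Moser's local boundedness (Gilbarg--Trudinger Theorem~8.17), applied with inner exponent $q = d+1$ and forcing measured in $L^{d+1}(B_{R/2})$, then gives
\[ \sup_{B_{R/4}(x_0)} w_D \le C_d R^{-d/(d+1)} \Vert w_D \Vert_{L^{d+1}(B_{R/2})} + C_d R^{2 - d/(d+1)} \Vert n_0 \Vert_\infty \bigl|\{p_D(t_1) > 0\} \cap B_{R/2}(x_0)\bigr|^{1/(d+1)}, \]
which accounts for the exponent $d/(d+1)$ in $k$. I control the first term by interpolating $\Vert w_D \Vert_{L^2(B_{R/2})} \le (t_1-t_0)^{-1/2} \Vert p_D \Vert_{L^2((t_0, t_1) \times B_{R/2})}$ (small by the strong $L^2$-convergence $p_D \to 0$ from Proposition~\ref{prop:strong_conv_pressure} and Remark~\ref{rem:density_smooth}) against the a priori $L^\infty$-bound $\Vert w_D \Vert_\infty \lesssim R^2 \Vert n_0 \Vert_\infty$ (itself a consequence of $w_D$ being a subsolution); this interpolation is responsible for the $d/2$ exponent in $k$. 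The second term is handled via the identity $|\{p_D(t_1) > 0\} \cap B_{R/2}(x_0)| = \int_{B_{R/2}(x_0)} \rho_D(t_1)\,dx$ together with the $L^1$-convergence $\rho_D(t_1) \to \rho(t_1) \equiv 0$ on $B_R(x_0)$ (Theorem~\ref{thm:conv_D_zero}, Item~\eqref{item:rho_conv}). Choosing $D \le D_0(\delta)$ makes both contributions at most $\delta$, after absorbing the appropriate powers of $R$ and $t_1 - t_0$.

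The main obstacle, and the origin of the precise upper bound $t_1 - t_0 \le R^k/(8c\delta)$, is upgrading this $L^\infty$-smallness of $w_D$ on $B_{R/4}(x_0)$ to the pointwise vanishing $p_D(t_1, \cdot) \equiv 0$ there. For this final step I plan a non-degeneracy argument: since the source term $n_D \rho_D$ vanishes wherever $\rho_D = 0$, patches can only grow through their current boundary; hence if $y \in B_{R/4}(x_0) \cap \{p_D(t_1) > 0\}$ then the connected component of $\{p_D(t_1) > 0\}$ through $y$ must reach $\mathbf{R}^d \setminus B_{R/2}(x_0)$ and therefore spans a macroscopic distance $\ge R/4$. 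Combining the uniform lower bound $n_D \ge e^{-T}\lambda$ from Step~2 of the proof of Proposition~\ref{prop:inner_control} with the superharmonic comparison $p_D(s, y) \ge \tfrac{e^{-T}\lambda}{2d}\mathrm{dist}(y, \partial\{p_D(s) > 0\})^2$ (via the test function $(r^2 - |x-y|^2)_+$) then forces a pointwise lower bound on $w_D(y)$ of order $R^2$, with constants depending only on $(d, \Vert n_0 \Vert_\infty, T, \lambda)$. The exponent $k = 4 + d/(d+1) + d/2$ is calibrated precisely so that the Moser upper bound from the previous paragraph remains strictly below this lower bound whenever $t_1 - t_0 \le R^k/(8c\delta)$, producing the required contradiction and hence the desired inclusion $B_R(x_0) \cap \{p_D(t_1) > 0\} \subset A_{R/4, R}(x_0)$.
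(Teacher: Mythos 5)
Your first half (the subsolution property of the time-averaged pressure $w_D$ and Moser's local boundedness) is reasonable, but the argument collapses at exactly the step you yourself flag as the main obstacle, and the gap is genuine. The pointwise non-degeneracy you invoke, $p_D(s,y)\gtrsim \operatorname{dist}\!\left(y,\partial\{p_D(s)>0\}\right)^2$, is governed by the distance from $y$ to the free boundary, not by the diameter of the connected component through $y$: a thin finger of width $\varepsilon\ll R$ reaching into $B_{R/4}(x_0)$ only yields $p_D\gtrsim\varepsilon^2$, so the claimed lower bound of order $R^2$ does not follow from the component merely spanning a distance $R/4$ --- the configuration you must exclude is precisely the one for which your bound degenerates. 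The correct obstacle-type non-degeneracy gives $\sup_{B_r(y)}p_D(t_1)\gtrsim r^2$, i.e.\ largeness of $p_D(t_1)$ \emph{somewhere near} $y$, not at $y$; and even that would not contradict the Moser estimate, because the latter controls only the time average $w_D$ over $[t_0,t_1]$, whose length may be as large as $R^k/(8c\delta)$: a finger that invades $B_{R/4}(x_0)$ only during a short final interval $[t_1-\eta,t_1]$ makes $p_D(t_1)$ large while leaving $w_D$ arbitrarily small. Some confinement valid at \emph{every} time in $[t_0,t_1]$ is indispensable. The paper gets it dynamically: it compares $v_D(t,\cdot)=\int_{t_0}^t p_D(s,\cdot)\,ds$ with an explicit barrier $\phi$ on an annulus $A_{r(t),R}(x_0)$ whose inner radius retreats at a rate dictated by a $C^1$ bound on $\phi$ (via $W^{2,d+1}$ elliptic estimates, the weak Harnack inequality and the maximum principle, using the smallness of $\Vert\rho_D\Vert_{L^{d+1}(B_R(x_0))}$ and of $\Vert p-p_D\Vert_{L^2}$), and the restriction $t_1-t_0\le R^k/(8c\delta)$ appears exactly as the requirement $r(t_1)\ge R/4$. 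Your static estimate at the endpoint cannot be calibrated to replace this.

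Two further mismatches with the statement. Your lower bound relies on $n_D\ge e^{-T}\lambda$, i.e.\ on the hypothesis $n_0\ge\lambda>0$ of Theorem~\ref{thm:conv_patches}, which is not assumed in Proposition~\ref{prop:ext_cont}; the constants $c$ and $\delta_0$ are only allowed to depend on $d$, $\Vert n_0\Vert_{L^\infty(\mathbf{R}^d)}$ (and $R$), not on $\lambda$ or $T$, and if $n_0$ is not bounded below the non-degeneracy mechanism disappears entirely. Also, the topological claim that any positivity component through $y\in B_{R/4}(x_0)$ at time $t_1$ must reach $\mathbf{R}^d\setminus B_{R/2}(x_0)$ is not justified: the hypothesis at $t_0$ constrains $\{p_D(t_0)>0\}$, and excluding later nucleation of pressure inside $B_{R/2}(x_0)$ would itself require an argument; the paper's comparison of $v_D$ with the barrier needs no such connectivity input.
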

\begin{proof}
Let $\delta_0$ to be fixed later, let $\delta \le \delta_0$. Let $D_0 > 0$ to be fixed later. Let $D \le D_0$ and assume that $B_{R}(x_0) \cap \{p_D(t_0) > 0\} \subset A_{\frac{R}{2}, R}(x_0)$. Let $r: [t_0, t_1] \to [\frac{R}{4}, \frac{R}{2}]$ a non-increasing function to be defined later. We construct a barrier (see Figure \ref{fig:barr}) $\phi: [t_0, t_1] \times B_R(x_0) \to \mathbf{R}$ by requiring that for every $t \in [t_0, t_1]$
\begin{equation}
\begin{cases}
-\Delta \phi(t) = \rho_D(t)n_D(t)\quad &\text{on}\ A_{r(t), R}(x_0),
\\ \phi(t) = p_D(t)\quad &\text{on}\ \partial B_{R}(x_0),
\\ \phi(t) = 0\quad &\text{on}\ \partial B_{r(t)}(x_0).
\end{cases}
\end{equation}

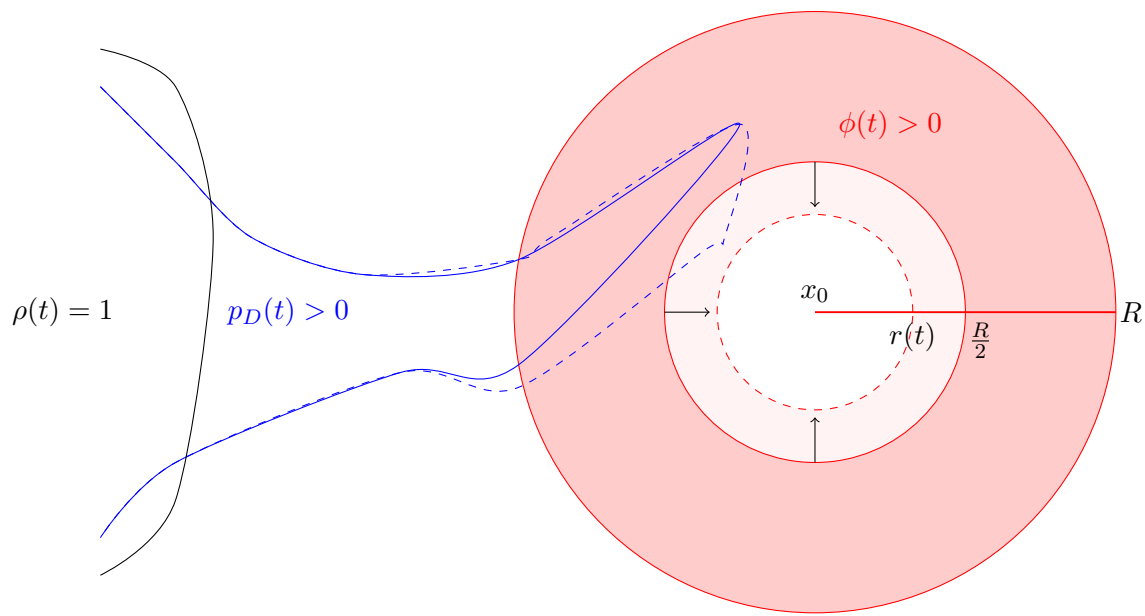
\begin{figure}
\begin{tikzpicture}
    \fill [red,even odd rule, opacity = 0.2] (12,0) circle[radius=2cm] circle[radius=4cm];
    \fill [red,even odd rule, opacity = 0.05] (12,0) circle[radius=1.3cm] circle[radius=2cm];
    \draw [red] (12,0) circle[radius = 2cm];
    \draw [red] (12,0) circle[radius = 4cm];
    \draw [red, dashed] (12,0) circle[radius = 1.3cm];

    \draw [line width=0.25mm, red ] (12,0) -- (16,0);
    \node at (16.2, 0) {$R$};
    \node at (14.2,0) [below] {$\frac{R}{2}$};
    \node at (13.3, 0) [below] {$r(t)$};

    \node[blue] at (5,0) {$p_D(t) > 0$};
    \node[red] at (13, 2.5) {$\phi(t) > 0$};

    \draw [blue] plot [smooth] coordinates { (2.5, 3)  (3.5, 2) (4.5, 1) (6, 0.5) (8.06, 0.7) (11,2.5) (8.06, -0.7) (6.5, -0.8) (3.5, -2) (2.5, -3) };
    \draw [blue, dashed] plot [smooth] coordinates { (2.5, 3)  (3.5, 2) (4.5, 1) (6, 0.5) (8.06, 0.7) (8.5, 1) (11,2.5) (10.8, 1) (10.5, 0.8) (8.06, -1) (6.5, -0.8) (3.5, -2) (2.5, -3) };

    \draw [black] plot [smooth] coordinates { (2.5, 3.5)  (3.5, 3) (4, 1) (3.5, -2.5) (2.5, -3.5) };

    \node[black] at (2,0) {$\rho(t) = 1$};
    \node[black] at (12, 0) [above] {$x_0$};

    \draw[->] (12,2) -- (12, 1.4);
    \draw[->] (10,0) -- (10.6, 0);
    \draw[->] (12,-2) -- (12, -1.4);
\end{tikzpicture}
\caption{Illustration of the barrier construction. The darker annulus represents $A_{\frac{R}{2}, R}(x_0)$. The continuous line entering it represents the zone where the pressure $p_D$ is positive at the initial time $t_0$. As time goes on, this region expands but stays in the annulus delimited by the outermost circle and the dashed inner circle of radius $r(t) \ge \frac{R}{4}$.}\label{fig:barr}
\end{figure}

Step 1. We claim that there exists a constant $c = c(d, \Vert n_0 \Vert_{L^\infty(\mathbf{R}^d)})$ depending only on $d$ and $\Vert n_0 \Vert_{L^{\infty}(\mathbf{R}^d)}$ such that for every $t \in [t_0, t_1]$
\begin{equation}\label{eq:a_priori_c1_barrier}
\Vert \phi(t) \Vert_{C^1\big(A_{r(t), \frac{4r(t)}{3}(x_0)}\big)} \le \frac{c_d}{R^{\frac{d}{d+1} + \frac{d}{2} + 3}}\left( \delta + \Vert p(t) - p_D(t) \Vert_{L^2(\mathbf{R}^d)}\right).
\end{equation}
To prove \eqref{eq:a_priori_c1_barrier} we start by observing that by the Sobolev embedding theorem there exists a constant $C_{r(t), R}$ such that
\begin{equation}
\Vert \phi(t) \Vert_{C^1\big(A_{r(t), \frac{4r(t)}{3}}(x_0)\big)} \le C_{r(t), R}\Vert \phi(t) \Vert_{W^{2, d+1}\big(A_{r(t), \frac{4r(t)}{3}}(x_0)\big)}.
\end{equation}
Using a scaling argument, and the bounds $\frac{R}{4} \le r(t)$, $R \le 1$ we get
\begin{equation}
\Vert \phi(t) \Vert_{C^1\big(A_{r(t), \frac{4r(t)}{3}}(x_0)\big)} \le \frac{c_d}{R^{\frac{d}{d+1}+1}}\Vert \phi(t) \Vert_{W^{2, d+1}\big(A_{r(t), \frac{4r(t)}{3}}(x_0)\big)}.
\end{equation}
Using \cite[Theorem 9.13]{Gilbarg2001} with $p = d+1$ and a scaling argument we obtain that
\begin{align*}
\Vert \phi(t) \Vert_{W^{2,d+1}\big(A_{r(t), \frac{4r(t)}{3}}(x_0)\big)} \le \frac{c}{R^2}\bigg( &\Vert \phi(t) \Vert_{L^{d+1}\big(A_{r(t), \frac{5r(t)}{3}}(x_0) \big)} 
\\ &+ \Vert n_D(t)\rho_D(t) \Vert_{L^{d+1}\big(A_{r(t), \frac{5r(t)}{3}}(x_0)\big)} \bigg)
\end{align*}
We now observe that by assumption $B_{R}(x_0) \subset \{ \rho(t_1) = 0\}$. By Item \eqref{item:rho_conv} in Theorem~\ref{thm:conv_D_zero} we can thus select $D_0(\delta) > 0$ such that 
\begin{equation}
\Vert \rho_D(t) \Vert_{L^{d+1}(B_{R}(x_0))} \le \Vert \rho_D(t_1) \Vert_{L^{d+1}(B_{R}(x_0))} \le \delta\quad \forall D \le D_0.
\end{equation}
In particular, using also that $\Vert n_D \Vert_{L^{\infty}(\mathbf{R}^d)} \le \Vert n_0 \Vert_{L^{\infty}(\mathbf{R}^d)}$ we obtain
\begin{equation}
\Vert \phi(t) \Vert_{W^{2,{d+1}}\big(A_{r(t), \frac{4r(t)}{3}}(x_0)\big)} \le \frac{c}{R^2}\left( \Vert \phi(t) \Vert_{L^{d+1}\big(A_{r(t), \frac{5r(t)}{3}}(x_0)\big)} +\Vert n_0 \Vert_{L^{\infty}(\mathbf{R}^d)} \delta \right).
\end{equation}
To estimate $\Vert \phi(t) \Vert_{L^{d+1}(A_{r(t), \frac{5r(t)}{3}}(x_0))}$ we observe that clearly, using also $R \le 1$,
\begin{equation}
\Vert \phi(t) \Vert_{L^{d+1}\big(A_{r(t), \frac{5r(t)}{3}}(x_0)\big)} \le c_d \Vert \phi(t) \Vert_{L^{\infty}\big(A_{r(t), \frac{5r(t)}{3}}(x_0)\big)}
\end{equation}
By using the fact that $\phi(t) = 0$ on $\partial B_{r(t)}(x_0)$, the weak Harnack's inequality for the subsolution $\phi_+$ implies that there exists a constant $c_d$ depending only on the dimension $d$ such that
\begin{align}
\Vert \phi(t) \Vert_{L^{\infty}\big(A_{r(t), \frac{5r(t)}{3}}(x_0)\big)} \le \frac{c_d}{R^{\frac{d}{2}}} \left(\Vert \phi(t) \Vert_{L^2\big(A_{r(t), 2r(t)}(x_0)\big)}+  \Vert n_0 \Vert_{L^{\infty}(\mathbf{R}^d)} \delta\right).
\end{align}
Observe now that by the maximum principle we have $p_D(t) \ge \phi(t)$ on $B_R(x_0)$, in particular, using also the assumption that $p(t) = 0$ on $B_R(x_0)$
\begin{align}
    \Vert \phi(t) \Vert_{L^2\big(A_{r(t), 2r(t)}(x_0)\big)} &\le \Vert p_D(t) \Vert_{L^2\big(A_{\frac{R}{4}, R}(x_0)\big)}
    \\ &= \Vert p(t) - p_D(t) \Vert_{L^2\big(A_{\frac{R}{4}, R}(x_0)\big)}.
\end{align}
Putting things together we thus obtain
\begin{align}
    \Vert \phi(t) \Vert_{C^1\big(A_{r(t), \frac{4r(t)}{3}}(x_0)\big)} & \le \frac{c_d}{R^{\frac{d}{d+1} + \frac{d}{2} + 3}}\left( \delta + \Vert p(t) - p_D(t) \Vert_{L^2(\mathbf{R}^d)}\right),
\end{align}
which is \eqref{eq:a_priori_c1_barrier}.

Step 2. We now choose
\begin{equation}
r(t) = \frac{R}{2} - \int_{t_0}^t \frac{c_d}{R^{\frac{d}{d+1} + \frac{d}{2} + 3}}\left( \delta + \Vert p(s) - p_D(s) \Vert_{L^2(\mathbf{R}^d)}\right)ds,
\end{equation}
and we define, for $s \in [t_0, t_1]$ and $x \in A_{r(t), R}(x_0)$
\begin{equation}
q(t, x) := \int_{t_0}^t \phi_+(s,x)ds.
\end{equation}
We claim that for every $t \in [t_0, t_1]$
\begin{equation}\label{eq:supersolution_toprove}
\chi_{A(t)} - \Delta q(t) \ge \chi_{A(t_0)} + \int_{t_0}^t \rho_D(s) n_D(s)\chi_{A(s)} ds\quad \text{on}\ B_{R}(x_0),
\end{equation}
where we set $A(t) := A_{r(t), R}(x_0)$.

To prove \eqref{eq:supersolution_toprove} we pick $\varphi \in C^{\infty}_c(B_R(x_0)), \varphi \ge 0$ and we compute, using the fact that $\nabla \phi_+(s) = \nabla \phi \mathbf{1}_{\phi(s) > 0}$
\begin{align}
&\begin{aligned}
\int_{B_R(x_0)} \nabla q(t) \cdot \nabla \varphi dx = \int_{t_0}^t \int_{B_{R}(x_0)} \nabla \phi_+(s) \cdot \nabla \varphi dx ds\label{eq:weak_form_supersol}
\end{aligned}
\\ &\begin{aligned}
\phantom{\int_{B_R(x_0)} \nabla q(t) \cdot \nabla \varphi dx} =  \int_{t_0}^t \int_{A_{r(s), R}(x_0)} \nabla \phi(s) \cdot \nabla \varphi dx ds
\end{aligned}
\\ &\begin{aligned}
\phantom{\int_{B_R(x_0)} \nabla q(t) \cdot \nabla \varphi dx} = &\int_{t_0}^t \int_{\partial A_{r(s), R}(x_0)} \nabla \phi(s) \cdot \nu(s) \varphi \mathcal{H}^{d-1}(dx) ds 
\\ &- \int_{t_0}^t \int_{A_{r(s), R}(x_0)} \Delta \phi(s) \varphi dx ds,
\end{aligned}
\end{align}
where we denoted by $\nu(s)$ the outer unit normal to $A_{r(s), R}(x_0)$. We continue by observing that $\nabla \phi(s) \cdot \nu(s) = - |\nabla \phi(s)|$. We also observe that by what we proved in Step 1 and by our choice of $r(t)$ we have $- |\nabla \phi(s)|  \ge \dot{r}(s)$ on $\partial B_{r(s)}(x_0)$. We denote by $V(s, x)$ the normal velocity of a point $x \in \partial A_{r(s), R}(x_0)$. We compute, using also that $\varphi = 0$ on $\partial B_R(x_0)$,
\begin{align*}
\int_{t_0}^t \int_{\partial A_{r(s), R}(x_0)} \nabla \phi(s) \cdot \nu(s) \varphi \mathcal{H}^{d-1}(dx) ds &= -\int_{t_0}^t \int_{\partial A_{r(s), R}(x_0)} |\nabla \phi(s)|\varphi \mathcal{H}^{d-1}(dx) ds
\\ & \ge \int_{t_0}^t \int_{\partial A_{r(s), R}(x_0)} \dot{r}(s) \varphi dx ds
\\ & = -\int_{t_0}^t \int_{\partial A_{r(s), R}(x_0)} V(s, x) \varphi(x) dx ds
\\ & = -\int_{t_0}^t \frac{d}{ds} \int_{A_{r(s), R}(x_0)} \varphi(x) dx ds
\\ & = \int_{B_R(x_0)} \chi_{A(t_0)} \varphi dx - \int_{B_R(x_0)} \chi_{A(t)} \varphi dx.
\end{align*}
Inserting back into \eqref{eq:weak_form_supersol} we obtain
\begin{align*}
\int_{B_R(x_0)} \nabla q(t) \cdot \nabla \varphi dx \ge &\int_{B_R(x_0)} \int_{t_0}^t \rho_D(s) n_D(s) \chi_{A(s)}ds \varphi dx
\\ & + \int_{B_R(x_0)} \chi_{A(t_0)} \varphi dx - \int_{B_R(x_0)} \chi_{A(t)} \varphi dx,
\end{align*}
which is the weak formulation for \eqref{eq:supersolution_toprove}.

Step 3. We now define 
\begin{equation}
v_D(t, x) := \int_{t_0}^t p_D(s, x)ds,
\end{equation}
and we observe that
\begin{equation}\label{eq:exact_eq_for_rho_D}
\rho_D(t) - \Delta v_D(t) = \rho_D(t_0) + \int_{t_0}^t \rho_D(s) n_D(s) ds.
\end{equation}
We claim that 
\begin{equation}\label{eq:comp}
v_D(t, x) \le q(t, x)\quad \text{for}\ t \in [t_0, t_1], x \in B_R(x_0).
\end{equation}
This of course implies that for every $t \in [t_0, t_1]$ we have $\{p_D(t) > 0\} \cap B_R(x_0) \subset A_{r(t), R}(x_0)$.

To prove \eqref{eq:comp} we subtract \eqref{eq:supersolution_toprove} from \eqref{eq:exact_eq_for_rho_D} and we test the resulting inequality with $(v_D(t) - q(t))_{+}$ (which is an admissible test function because $v_D(t) = q(t)$ on $\partial B_{R}(x_0)$). We obtain, using also that at the initial time $t_0$ we have by assumption that $\chi_{A(t_0)} \ge \rho_D(t_0)$ in the ball $B_R(x_0)$,
\begin{align}
&\int_{B_{R}(x_0)} (\rho_D(t)-\chi_{A(t)})(v_D(t) - q(t))_{+} dx + \int_{B_{R}(x_0)}|\nabla (v_D(t) - q(t))_{+}|^2 ds\label{eq:comparison}
\\ & \le \int_{B_{R}(x_0)} (\rho_D(t_0)-\chi_{A(t_0)}(x_0))(v_D(t) - q(t))_{+} dx
\\ & \le 0.
\end{align}
Observe that both terms on the left-hand side of \eqref{eq:comparison} are non-negative. Indeed, if for some $x \in B_R(x_0)$ we have $v_D(t, x) > q(t, x)$, then clearly $\rho_D(t, x) = 1$. We thus infer that there exists $c(t) \in [0, +\infty)$ such that
\begin{equation}
(v_D(t) - q(t))_+ = c(t)\quad \text{on}\ B_R(x_0),
\end{equation}
and since $v_D(t) = q(t)$ on $\partial B_R(x_0)$ we have $c(t) = 0$. In other words \eqref{eq:comp} holds.

Step 4. Conclusion.
\\
Because of Item \eqref{item:pressure_conver} in Remark~\ref{rem:density_smooth} we can assume that $D_0$ is so small that for every $D \le D_0$
\begin{equation}
\Vert p - p_D \Vert_{L^2((0, 2t_1) \times \mathbf{R}^d)} \le \delta.
\end{equation}
In particular, this yields
\begin{equation}
r(t_1) \ge \frac{R}{2} - \frac{c}{R^{\frac{d}{d+1}+\frac{d}{2}+3}}\left( \delta (t_1 - t_0) + \delta\right).
\end{equation}
We need $r(t_1) \ge \frac{R}{4}$, which is satisfied provided
\begin{equation}\label{eq:cond_delta}
\frac{R}{4} \ge \frac{c}{R^{\frac{d}{d+1}+\frac{d}{2}+3}}\left( \delta (t_1 - t_0) + \delta\right).
\end{equation}
We define 
\begin{equation}
\delta_0 := \frac{R^{4 + \frac{d}{2} + \frac{d}{d+1}}}{8c},
\end{equation}
then for $\delta \le \delta_0$ we have that \eqref{eq:cond_delta} is satisfied provided
\begin{equation}
\frac{R^{4+\frac{d}{d+1} + \frac{d}{2}}}{8c\delta} \ge t_1 - t_0.\qedhere
\end{equation}
%
%\\ r(t) = \frac{R}{2}-\int_{t_0}^t\frac{c_d}{R^{\frac{d}{p} + \frac{d}{2} + 3}}\left( \delta + \Vert \nabla p(s) - \nabla p_D(s) \Vert_{L^2(\mathbf{R}^d)}\right)ds.
\end{proof}
\begin{proof}[Proof of Theorem~\ref{thm:conv_patches}]
We will prove the following: for every $t\in (\hat{t}, T)$, given $\epsilon>0$ there exists $\overline{D} > 0$ such that for every $D \le \overline{D}$
\begin{equation}
d_H(\Gamma_D(t), \Gamma(t)) \le \epsilon.
\end{equation}
Because of Proposition~\ref{prop:easy_part}, it suffices to show that there exists $\overline{D} > 0$ such that for every $D \le \overline{D}$
\begin{equation}\label{eq:to_show}
\sup_{x \in \Gamma_D(t)} d(x, \Gamma(t)) \le \epsilon.
\end{equation}
To prove \eqref{eq:to_show} we proceed in three steps.

Step 1. Exterior fingers control. Define $\Omega_t = \{ \rho(t) = 1\}$. We claim that we can select $D_1 > 0$ such that for $D \le D_1$
\begin{equation}\label{eq:exterior_fing_cont}
\Omega^{\epsilon}_t := \left\{ x \in \mathbf{R}^d: d(x, \Omega_t) < \epsilon\right\} \supset \Gamma_D(t) \setminus \{\rho(t) = 1\}..
\end{equation}
To prove \eqref{eq:exterior_fing_cont}, we can apply Proposition~\ref{prop:ext_cont} with $R = \frac{\epsilon}{2}$, $t_1 = t$, $t_0 = 0$ and $\delta$ small enough to find that there exists $D_1 > 0$ such that for every  $x_0 \in \partial \Omega_t^{\frac{\epsilon}{2}}$ and for every $D \le D_1$
\begin{equation}
B_{\frac{\epsilon}{2}}(x_0) \cap \{p_D(t) > 0\} \subset A_{\frac{\epsilon}{8}, \frac{\epsilon}{2}}(x_0).
\end{equation}
In particular we infer $\Gamma_D(t)\setminus \{\rho(t) = 1\} \subset \Omega_t^\epsilon$.

Step 2. Interior fingers control. 
\\
By the smoothness of $p$ on $[\hat{t}, T]$ we can find a constant $C > 0$ such that $|\nabla p(s, x)| \le C$ for every $s \in [\hat{t}, T]$ and every $x \in \partial \{ \rho(s) = 1 \}$. We let $\gamma = \min\left( 1, \frac{\epsilon}{2C}, t-\hat{t} \right)$. We apply Proposition~\ref{prop:inner_control} with $\delta \le \frac{\gamma^2}{5}$. Then there exists $D_2 > 0$ such that for $D \le D_2$
\begin{equation}
U_{\gamma^2}(t-\gamma) \subset \{p_D(t) > 0\}.
\end{equation}
We now take $x \in \Gamma_D(t) \cap \{\rho(t) = 1\}$. By what we just showed we have $x \in \mathbf{R}^d \setminus U_{\gamma^2}(t-\gamma)$. In particular there exists $y \in \partial \Gamma(t-\gamma)$ such that $|x - y| < \gamma^2$. Let now $y:[t-\gamma, t] \to \mathbf{R}^d$ be the curve defined by
\begin{equation}
\begin{cases}
\dot{y}(s) = -\nabla p(s, y(s))\quad &\text{for}\ s \in [t-\gamma, t]
\\ y(t-\gamma) = y.
\end{cases}
\end{equation}
Then $y(t) \in \Gamma(t)$ and
\begin{align*}
d(x, \Gamma(t)) &\le |x-y(t)|
\\ &\le |x - y(t-\gamma)| + |y(t-\gamma) - y(t)|
\\ &\le \gamma^2 + \int_{t-\gamma}^t |\nabla p(s, y(s))|ds \le \gamma^2 + C\gamma \le \epsilon.
\end{align*}
Step 3. Conclusion.
\\
Take $\overline{D} = \min(D_1, D_2)$ then for any $D \le \overline{D}$ and any $x \in \Gamma_D(t)$ we have
\begin{enumerate}
\item If $x \in \mathbf{R}^d\setminus\{\rho(t) = 1\}$ then, by what we proved in Step 1 we have $x \in \Omega_t^{\epsilon}$, thus $d(x, \Gamma(t)) \le \epsilon$.
\item Otherwise we apply what we proved in Step 2 to infer that $d(x, \Gamma(t)) \le \epsilon$.
\end{enumerate}
We thus have that
\begin{equation}
\sup_{x \in \Gamma_D(t)} d(x, \Gamma(t)) \le \epsilon,
\end{equation}
and the proof of Theorem~\ref{thm:conv_patches} is concluded.
\end{proof}

\section*{Acknowledgements}

The first author is partially supported by NSF grant DMS-1900804. The second author has received funding from the Deutsche Forschungsgemeinschaft (DFG, German Research Foundation) under Germany's Excellence Strategy -- EXC-2047/1 -- 390685813. This work was completed during an extended research stay of the second author at UCLA in the framework of the BIGS Global Math Network program. The second author thanks BIGS and UCLA for the support and for providing such a stimulating working environment.

\bibliography{biblio_kim_lelmi}{}
\bibliographystyle{plain}

\end{document}